\providecommand{\U}[1]{\protect\rule{.1in}{.1in}}
\def\subneq{\mathop{\raise 0.7ex \hbox{$\subset$}}\!\!\!\!\!\!{\raise -0.6ex\hbox{$\neq$}}\,}
\def\div{\mathop{\rm div}\nolimits}
\def\u{\underline}
\def\QQ{{\ \rlap {\raise 0.4ex \hbox{$\scriptscriptstyle |$}}\hskip -0.2em Q}}
\def\RR{{ I\!\!R}}
\def\1{{1\hskip-0.25em{\rm l}}}
\def\CC{{\ \rlap{\raise 0.4ex \hbox{$\scriptscriptstyle |$}}\hskip -0.2em C}}
\def\sobre#1#2{\lower 1ex \hbox{ $#1 \atop #2 $ } }
\def\bajo#1#2{\raise 1ex \hbox{ $#1 \atop #2 $ } }
\def\div{{\rm div}}
\def\ep{\varepsilon}
\def\p{\partial}
\def\O{\Omega}
\def\Oe{\Omega^\ep}
\def\u2{{u^\ep \over \ep^2 }}
\def\u3{{\displaystyle {\bar u}^\ep \over \ep^2 }}
\def\div{\mathop{\rm div}\nolimits}
\def\u{\underline}
\def\QQ{{\ \rlap {\raise 0.4ex \hbox{$\scriptscriptstyle |$}}\hskip -0.2em Q}}
\def\RR{{ I\!\!R}}
\def\1{{1\hskip-0.25em{\rm l}}}
\def\CC{{\ \rlap{\raise 0.4ex \hbox{$\scriptscriptstyle |$}}\hskip -0.2em C}}
\def\B{{\cal B}}
\def\sobre#1#2{\lower 1ex \hbox{ $#1 \atop #2 $ } }
\def\bajo#1#2{\raise 1ex \hbox{ $#1 \atop #2 $ } }
\def\div{{\rm div}}
\def\ve{\mathbf{v}^\ep}
\def\ep{\varepsilon}
\def\p{\partial}
\def\O{\Omega}
\def\Oe{\Omega^\ep}
\def\u2{{u^\ep \over \ep^2 }}
\def\u3{{\displaystyle {\bar u}^\ep \over \ep^2 }}
\def\p{\partial}
\newtheorem{theorem}{Theorem}
\newtheorem{corollary}[theorem]{Corollary}
\newtheorem{definition}[theorem]{Definition}
\newtheorem{lemma}[theorem]{Lemma}
\newtheorem{proposition}[theorem]{Proposition}
\newtheorem{remark}[theorem]{Remark}
\begin{document}

\title{Effective pressure interface law for transport phenomena between an unconfined fluid and a porous medium using homogenization}
\author{Anna Marciniak-Czochra \thanks{AM-C  was supported by ERC Starting Grant "Biostruct" and Emmy Noether Programme of German Research Council (DFG).}
\\IWR and BIOQUANT,
Universit\"at Heidelberg \\
Im Neuenheimer Feld 267,
69120 Heidelberg  \\ GERMANY \\ ({\tt anna.marciniak@iwr.uni-heidelberg.de})
\and Andro Mikeli\'c\thanks{The research of AM
was partially supported by  {the {\it Joint-Programme
Programme Inter Carnot Fraunhofer PICF} { FPSI-Filt: 
Modeling of fluid interaction with deformable
porous media with application to simulation of
processes in industrial filters}. He is grateful to the Ruprecht-Karls-Universit\"at Heidelberg and the Heidelberg Graduate School of Mathematical and Computational Methods for the Science (HGS MathComp) for giving him good working conditions through the W. Romberg Guest Professorship 2011-2013.}
 }\\Universit\'e de Lyon, Lyon, F-69003, France;\\Universit\'e
Lyon 1, Institut Camille Jordan, UMR 5208\\B\^at. Braconnier,  43, Bd du 11
novembre 1918,\\ 69622 Villeurbanne Cedex,
FRANCE\\(\texttt{mikelic@univ-lyon1.fr})  } \maketitle

\begin{abstract}   We
present modeling of the incompressible viscous flows in the domain containing unconfined fluid and a porous medium in the case when the flow in the unconfined domain dominates.  For such setting a rigorous derivation of the Beavers-Joseph-Saffman
interface condition was undertaken  by J\"ager and Mikeli\'c  [SIAM J. Appl. Math. \rm 60 (2000), p.1111-1127] using the homogenization method. So far the interface law for the pressure was conceived and confirmed only numerically. In this article we derive the Beavers and Joseph law for a general body force by estimating the pressure field  approximation. Different than in the Poiseuille flow case, the velocity approximation is not divergence-free and the precise pressure estimation is essential.
This new estimate allows us to justify rigorously the pressure jump condition using the Navier boundary layer, already used to calculate the constant in the law by Beavers and Joseph. Finally,  our results confirm that the position of the interface influences the solution only at the order of physical permeability and therefore the choice of this position does not pose problems.  \end{abstract}

\section{Introduction}

Slow viscous and incompressible simultaneous flow through an unconfined region and a porous medium  occurs in a wide range of industrial processes and natural phenomena. One of the classical problems is finding effective boundary conditions at a naturally
permeable wall, i.e., at the surface which separates a channel flow and a porous medium.

The effective laminar incompressible and viscous flow through a porous medium can be  described
using the Darcy's law.   The unconfined fluid flow in the channel is  governed by the Stokes system, or by the Navier-Stokes system if the inertia effects in the free fluid are
important. To model the coupling of both processes, it is necessary to put together two
systems of partial differential equations:  the second order system for the velocity
and the first order equation for the pressure,
\begin{gather}
-\mu \Delta \mathbf{u} +\nabla p = f \label{Stokes1}
 \\ \mbox{div } \mathbf{u} =0,
\label{Stokes2}
\end{gather}
in  the unconfined fluid region,
and the scalar second order equation for the pressure and the
first order system for the seepage velocity,
\begin{gather}
-\mu  \mathbf{v}^F  = K ( f- \nabla p^F) \label{Darcy1} \\ \mbox{div }  \mathbf{v}^F =0,
\label{Darcy2}
\end{gather}
in the porous medium.\vskip0pt
The orders of the corresponding
differential operators are different  and it is not clear what
conditions it is necessary to impose at the interface between the
free fluid and the porous part of the domain. One coupling
condition is based on the continuity of the
normal mass flux. However, it is not enough for determination of the
effective flow and it is necessary to specify more conditions.

Several laws of fluid dynamics in porous media were derived using homogenization. The most notable example is the Darcy's law, being the effective equation for one phase flow through a rigid porous medium. Its formal derivation using the 2-scale expansion goes back to the classical paper  by Ene and Sanchez-Palencia \cite{ESP}. This derivation was made mathematically rigorous by Tartar in reference \cite{Ta1980}. For the detailed proof in the case of a periodic porous medium we refer to the review papers  by Allaire \cite{All97},  and  by Mikeli\'c \cite{MIK00} and for a random statistically homogeneous porous medium to the paper of  Beliaev and Kozlov \cite{BeKo95}.

As in the derivation of Darcy's law, we would like to apply the homogenization technique to find the effective interface laws.  However, the assumption of statistical homogeneity of the domain,
which is necessary for the homogenization approach, is not valid close to the interface. Consequently,  deviations from the Darcy's law are expected in the thin layers near the interfaces. Furthermore, presence of such interfaces can significantly change the
structure of the model coefficients and lead to different  effective constitutive laws for the flow.

 It was  experimentally found by Beavers and  Joseph in \cite{BJ} that the jump of the
tangential component of the effective velocity at the interface is
proportional to the shear stress originating from the free fluid. This law was
justified at a physics level of rigor by  Saffman in
\cite{SAF}, where it was observed that the seepage velocity
contribution could be neglected leading to the law in the form
\begin{equation}\label{AUX}
    \sqrt{k} \frac{\partial v_{\tau}}{ \partial \nu} =\alpha  v_\tau +
O(k),
\end{equation}
where $\alpha$ is a dimensionless parameter
depending on the geometrical structure of the porous medium, $\ep$
is the characteristic pore size, and $ k$ is the
scalar permeability. $\nu$ denotes the unit normal vector at the
interface and $v_\tau$ is the slip velocity of the free fluid in
the channel.
    Saffman's
modification of the law by Beavers and Joseph has been widely accepted.

As an alternative to (\ref{AUX}), the continuity of the effective pressure was suggested by Ene and  Sanchez-Palencia in \cite{ESP}. While this interface law is acceptable from modeling point of view,  it should be noted that the well-posedness of the averaged
problem is not clear.

The law (\ref{AUX}) was rigorously justified by J\"ager and Mikeli\'c in \cite{JM00}.  Numerical calculations of the boundary layers for the experimental conditions of Beavers and Joseph are presented in \cite{JMN01}. They indicate appearance of a {\it pressure jump} at the interface. These issues were heuristically discussed in \cite{JaegMik09}.

In the experiment by Beavers and Joseph only the flows tangential to a naturally
permeable wall (a porous bed) were considered. In general, the situation is much more complicated and  many types
of interfacial conditions have been proposed, such as continuous tangential velocity
with discontinuous tangential shear stress introduced in  \cite{OTW1:95}
by Ochoa-Tapia and Whitaker, or
continuous tangential velocity and tangential shear stress in reference \cite{NeNa:74} by Neale and
Nader, or discontinuous tangential velocity and tangential shear
stress from \cite{CiKu:99} by Cieszko and  Kubik.
In particular, in  \cite{OTW1:95} the
continuity of the velocity and  the continuity of the "modified" normal stress were obtained at the interface using volume averaging. In order to perform the averaging it was necessary to assume  the Brinkman's flow in the porous part and a transition layer between  the two domains.   Numerical study of the hydrodynamic boundary condition at
the interface between a porous and plain medium was performed by Sahraoui and
Kaviany \cite{SahKav92}.    Numerical implementation of the effective interface couplings was presented in \cite{Quat} and in \cite{IL}.
 Nevertheless, determination of the practical and relevant first-order
interface conditions between the pure fluid and the porous matrix remains an
open question that could be treated using the technique  developed in reference \cite{JaMi2}.

This paper is a continuation of  works \cite{JM00} and \cite{JMN01} and constitutes a step forward in the development of the rigorous approach to model effective interface laws for the transport phenomena between an unconfined fluid and a porous medium.  We  depart beyond justification of the law (\ref{AUX}) developed in \cite{JM00} and {\it undertake a rigorous derivation}   of the interface laws for  the viscous flow in a long channel in contact with a porous bed. The macroscopic model  derived links pressure jump with the shear stress of the unconfined fluid at the interface, an effect which was predicted based on numerical simulations in reference \cite{JMN01}.  Derivation of  the law of Beavers  and Joseph is based on the procedures proposed in \cite{JM00} and discussed in \cite{JaegMik09}; however it is nontrivially adjusted to the new setting involving a general body force.  We consider a  general situation when the flow in the unconfined region dominates. Nevertheless, even if the flow is much less important in the porous part, the pressures are of the same order of magnitude. Hence finding and justifying the interface law for the pressure is of fundamental interest.

The review paper \cite{JaegMik09} was concluded with the sentence
"Proving the error estimate for the pressure approximation in the porous bed $\Omega^{\varepsilon}_2$ remains an open problem".  We solve this problem and present a mathematically rigorous derivation of the pressure jump interface law, which is the next order correction of the Beavers-Joseph law.
  We
obtain the effective equations heuristically and then rigorously justify
them. Combination of  homogenization and boundary layer approaches is used to achieve this end.  Study of such complex flows leads to artificial compressibility effects in the upscaling process. In this paper we develop the required estimate of the pressure. Our main results are the following:
\begin{enumerate}
%\item Estimate of the pressure,  based on generalization of the classical Tartar's construction from reference \cite{Ta1980} (see also \cite{All97} or \cite{MIK00}).
 \item Confirmation of Saffman's form of the law by Beavers and Joseph in the more general setting \begin{equation}\label{BJ1}
u^{eff}_1 = -\ep C^{bl}_1 \frac{\p u^{eff}_{1} }{ \p x_2}+O(\ep^2) ,    \end{equation}
 where $u^{eff}$ is the average
over the characteristic pore opening at the naturally permeable wall. Physical permeability is given by $k=k^\ep = \ep^2 K$ and the
 constant in (\ref{BJ1}) is proportional to
$\sqrt{k^\ep}$. The error is of order $k^\ep$, as remarked by
Saffman in \cite{SAF}. It is important to point out that the
parameter ${\alpha}$ from  expression (\ref{AUX}) is determined
taking into account the auxiliary problems, which we formulate later in (\ref{BJ4.2})-(\ref{4.6}) and
(\ref{4.15}), and that it is given by  $\displaystyle { \alpha} = - \frac{1}{ \ep
C^{bl}_1}>0 $.
  \item Interface between the unconfined flow and the porous bed is an artificial mathematical boundary
and it can be chosen in a layer having the pore size
thickness. We show that a perturbation of the interface position of the order
$O(\ep)$ implies a perturbation in the solution of $O(\ep^2)$.
Consequently, it influences the result only at the next order of the asymptotic
expansion.
  \item We obtain a uniform
bound on the pressure approximation. Furthermore, we prove that there is a jump of the effective pressure on the interface and that it is proportional to the free fluid flow shear at the boundary. The proportionality constant is calculated from the boundary layer problem (\ref{BJ4.2})-(\ref{4.6}). Homogenization leads to the discontinuity of the effective pressure field at
the interface, which differs from the pressure interface
continuity law proposed in reference \cite{ESP}. If the boundary layer pressure is neglected, the pressure in the neighborhood of the interface is poorly approximated.
\end{enumerate}
{Here, we remark that some classes of problems, like infiltration into the porous medium, are characterized by the velocity field of the same order in both domains. Such situation requests much larger body force in the porous part than in  unconfined. Some situations of this kind were considered in \cite{JaMi2}. In this paper, the body force is of order $O(1)$ in both domains.}

The paper is organized as follows. In Section \ref{StatPb} we formulate the problem and main results.  Section \ref{BJSec} is devoted to the proof of the results. We conclude the paper with two short appendices recalling the notion of very weak solutions and definition and properties of the Navier boundary layer.

\section{Statement of the problem and of the results}\label{StatPb}
\subsection{Definition of the geometry} \label{ExpDarcy}

Let $L, h$ and $H$ be positive real numbers. We consider a two dimensional periodic porous medium $\Omega_2 =
(0,L)\times (-H, 0)$ with a periodic arrangement of the pores. The formal
description goes along the following lines: \vskip0pt
First, we define the geometrical
structure inside the unit cell $Y = (0,1)^2$. Let
$Y_s$ (the solid part) be a closed strictly included subset of $\bar{Y}$, and $Y_F =
Y\backslash Y_s$ (the fluid part). Now we make a periodic
repetition of $Y_s$ all over $\mathbb{R}^2$ and set $Y^k_s = Y_s + k $, $k
\in \mathbb{Z}^2$. Obviously, the resulting set $E_s = \bigcup_{k \in
\mathbb{Z}^2} Y^k_s$ is a closed subset of $ \mathbb{R}^2$ and $E_F =  \mathbb{R}^2
\backslash E_s$ in an open set in $ \mathbb{R}^2$.  We suppose that
 $Y_s$ has a boundary of class $ C^{0,1}$, which is locally located on
one side of their boundary. Obviously,  $ E_F $ is connected and
$E_s$ is not.  \vskip3pt Now we notice that $\Omega_2$ is covered with a
regular mesh of size $ \varepsilon$, each cell being a cube
$Y^{\varepsilon}_i$, with $1 \leq i \leq N(\varepsilon) = \vert
\Omega_2 \vert  \varepsilon^{-2} [1+ o(1)]$. Each cube
$Y^{\varepsilon}_i$ is homeomorphic to $Y$, by linear homeomorphism
$\Pi^{\varepsilon}_i$, being composed of translation and a homothety
of ratio $1/ \varepsilon$.

We define
$\displaystyle
Y^{\varepsilon}_{S_i} = (\Pi^{\varepsilon}_i)^{-1}(Y_s)\qquad
\hbox{ and }\quad Y^{\varepsilon}_{F_i} =
(\Pi^{\varepsilon}_i)^{-1}(Y_F).
$
For sufficiently small $\varepsilon > 0 $ we consider the set
$ \displaystyle T_{\varepsilon} = \{k \in  \mathbb{Z}^2  \vert  Y^{\varepsilon}_{S_k} \subset
\Omega_2 \} $
and define
$$
O_{\varepsilon} = \bigcup_{k \in T_{\varepsilon}}
Y^{\varepsilon}_{S_k} , \quad S^{\varepsilon} = \partial
O_{\varepsilon}, \quad \Omega^{\varepsilon}_2 = \Omega_2 \backslash
O_{\varepsilon} =\O_2 \cap \ep E_F$$ Obviously, $\partial
\Omega^{\varepsilon}_2 = \partial \Omega_2 \cup S^{\varepsilon}$. The
domains $O_{\varepsilon}$ and $\Omega^{\varepsilon}_2 $ represent,
respectively, the solid and fluid parts of the porous medium
$\Omega$. For simplicity, we suppose $L/\varepsilon , H/\varepsilon , h/\varepsilon \in \mathbb{N}$.

We set $\Sigma =(0,L) \times \{ 0\} $, $\Omega_1 = (0,L)\times (0,h)$ and $\O = (0,L) \times (-H,h)$.
Furthermore, let $\O^\ep = \Oe_2 \cup \Sigma \cup \O_1 $.

\begin{figure}
\centerline{\includegraphics[width=.6\textwidth]{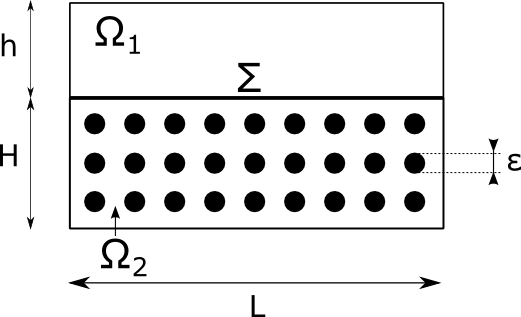}}
\caption{The geometry}\label{afoto}
\end{figure}

%{\bf FIGURE?}

 A very important property of the porous media is the following variant of
Poincar\'e's inequality:

\begin{lemma} \label{L1} (see e.g. \cite{SP80})
Let $\varphi \in V(\Oe_2 ) = \{ \varphi \in H^1 (\Oe_2 ) \ | \varphi =0
 \; \hbox{ on } {S}^\ep \} .$
  %  $ \varphi \; \mbox{ is periodic in } \; (x_1 , x_2) \; \hbox{ with period } \; L\ \}.$
Then, it holds
\begin{gather}
 \Vert \varphi \Vert _{L^2 (\Sigma )} \leq C \ep^{1/2} \Vert  \nabla_x
\varphi \Vert _{L^2 (\Oe_2 )^2}, \label{Poinc1} \\
 \Vert \varphi \Vert _{L^2 (\Oe_2  )} \leq C \ep \Vert  \nabla_x
\varphi \Vert _{L^2 (\Oe_2 )^2}.\label{Poinc2}
\end{gather}
\end{lemma}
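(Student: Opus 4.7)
The strategy is the standard cell-by-cell rescaling argument that underlies virtually all Poincaré-type estimates in periodic perforated domains. I will first establish reference-cell inequalities on the fixed unit fluid cell $Y_F$, and then transfer them to each $Y^\ep_{F_k}$ by the linear homeomorphism $\Pi^\ep_k$ and sum over $k\in T_\ep$. Because $L/\ep,\, H/\ep \in \NN$, the domain $\Omega^\ep_2$ is \emph{exactly} the disjoint union (up to measure zero) of the cells $Y^\ep_{F_k}$, $k\in T_\ep$, so no stray boundary terms appear.

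For the reference inequalities, fix the space $V(Y_F)=\{v\in H^1(Y_F)\,:\,v=0 \text{ on }\partial Y_s\}$. Since $Y_s$ is a closed set strictly included in $\bar Y$ with $|Y_s|>0$ and a Lipschitz boundary, the standard Poincaré inequality yields a constant $C_0$ with
\begin{equation*}
\|v\|_{L^2(Y_F)} \leq C_0\,\|\nabla_y v\|_{L^2(Y_F)^2}, \qquad v\in V(Y_F).
\end{equation*}
Combined with the trace theorem $H^1(Y_F)\hookrightarrow L^2(\partial Y_F)$ applied on the top face $\Sigma_{Y}=(0,1)\times\{1\}$ of $Y_F$ (or any fixed face, the choice being irrelevant by symmetry of the cell decomposition), this in turn gives
\begin{equation*}
\|v\|_{L^2(\Sigma_Y)} \leq C_0'\,\|\nabla_y v\|_{L^2(Y_F)^2}, \qquad v\in V(Y_F).
\end{equation*}
Both constants depend only on the fixed geometry of $Y_s$.

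Now I transfer these to $Y^\ep_{F_k}$ by the change of variable $y=(x-\ep k)/\ep$, for which $dy=\ep^{-2}\,dx$, $|\nabla_y v|^2 = \ep^2|\nabla_x\varphi|^2$, and on codimension-one faces $d\sigma_y = \ep^{-1}\,d\sigma_x$. Applied to $v(y)=\varphi(\ep y+\ep k)$, which belongs to $V(Y_F)$ as soon as $\varphi\in V(\Oe_2)$ (the vanishing on the rescaled solid boundary being the definition of $S^\ep$), the two cell inequalities become
\begin{equation*}
\int_{Y^\ep_{F_k}}|\varphi|^2\,dx \leq C_0^2\,\ep^{2}\int_{Y^\ep_{F_k}}|\nabla_x\varphi|^2\,dx,\qquad
\int_{\Sigma\cap \overline{Y^\ep_{F_k}}}|\varphi|^2\,d\sigma \leq (C_0')^2\,\ep\int_{Y^\ep_{F_k}}|\nabla_x\varphi|^2\,dx.
\end{equation*}
Summing the first estimate over all $k\in T_\ep$ yields \eqref{Poinc2}, and summing the second over those $k$ for which $\overline{Y^\ep_{F_k}}$ touches $\Sigma$ yields \eqref{Poinc1}, with absolute constants depending only on $Y_F$.

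The only point requiring care — and in this setting it is mild — is to verify that cells meeting the exterior boundary $\partial\Omega_2$ cause no difficulty: the divisibility assumption $L/\ep,H/\ep,h/\ep\in\NN$ ensures that every fluid cell used in the sum is a full, undeformed copy of $Y_F$ entirely contained in $\Omega^\ep_2$, so the rescaling identity is exact and no boundary-layer cells need separate treatment. With that bookkeeping in place, nothing beyond the reference Poincaré and trace estimates is needed.
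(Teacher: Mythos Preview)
Your argument is correct and is exactly the standard cell-by-cell rescaling proof for Poincar\'e-type estimates in periodic perforated domains. The paper itself does not give a proof of this lemma; it merely cites \cite{SP80}, where precisely this argument can be found, so there is nothing further to compare.
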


\subsection{The microscopic equations}
 \vskip0pt

Having
defined the geometrical structure of the porous medium, we precise
the flow problem. Here we consider the  slow viscous incompressible
flow of a single fluid through a porous medium. We suppose the
no-slip condition at the boundaries of the pores (i.e., a rigid porous medium). Then, we describe
it by the following non-dimensional steady Stokes system in $\Omega^\varepsilon $
(the fluid part of the porous medium $\Omega$):
 \begin{gather} -
 \Delta \mathbf{v}^\ep + \nabla p^{\ep} = \mathbf{f} \qquad \hbox{ in } \quad \Oe
\label{1.3} \\ \div \, \ve = 0 \qquad \hbox{ in } \quad \Oe , \qquad \int_{\O_1} p^\ep \ dx =0,
\label{1.4} \\ \ve  =0 \quad \hbox{on } \quad \p \Oe \setminus \bigg(  \{ x_1 = 0 \} \cup \{ x_1 =L \} \bigg)
, \qquad \{ \ve , p^\ep \} \quad \hbox{ is }
L-\hbox{periodic in } \; x_1 . \label{1.5} \end{gather}
  Here the non-dimensional  $ \mathbf{f}$ stands for the effects of external forces or an
injection at the boundary or a given pressure drop, and it corresponds to the physical forcing term
multiplied by the ratio between Reynolds' number and Froude's number squared. $\ve $
denotes the non-dimensional velocity and  $p^\ep$ is the non-dimensional pressure. The non-constant force $f$ corresponds,  e.g., to a non-constant pressure drop or to injection profiles which are not parabolic.  \vskip0pt Let
\begin{equation}\label{WE}
   W^\ep = \{ \mathbf{z} \in H^1 (\Oe )^2 , \;
\mathbf{z}=0 \; \hbox{ on } \; \p \Oe \setminus  \bigg(  \{ x_1 = 0 \} \cup \{ x_1 =L \} \bigg) \; \hbox{ and }
\; \mathbf{z} \; \hbox{ is } \; L-\hbox{periodic in } x_1\} .
\end{equation}
The
variational form of the problem (\ref{1.3})-(\ref{1.5}) reads:
\vskip0pt $\qquad \qquad $ Find $\ve \in W^\ep  $, div $\ve =0$
in $\Oe$ and $p^\ep \in L^2 (\Oe )$ such that
\begin{equation}\label{1.6}
    \int_{\Oe} \nabla \ve \nabla \varphi \, dx  - \int_{\Oe} p^\ep \hbox{ div }
\varphi \, dx = \int_{\Oe} \mathbf{f} \varphi \, dx \qquad \forall \varphi
\in W^\ep .
\end{equation}
 Then for $\mathbf{f}\in C^\infty ({\overline \O})^2 $, the elementary elliptic
variational theory gives the existence of the unique velocity
field  $\ve \in W^\ep$ ,
 div $\ve =0$ in $\Oe$, which solves (\ref{1.6}) for every $ \varphi \in W^\ep
 , $ div $\varphi =0$ in $\Oe$. The construction of the pressure field
goes through De Rham's theorem (see e.g. book \cite{Tem}).

\subsection{Main result}
 \vskip0pt

 We start by introducing the effective problems in $\O_1$ (the unconfined fluid part) and $\O_2$:

Find a velocity field
$u^{0}$ and a pressure field $p^{eff}$ such that
\begin{gather}
- \triangle \mathbf{u}^{eff}  + \nabla p^{eff} = \mathbf{f} \qquad \hbox{ in } \O_1 ,\label{4.91}\\
%\noalign{\vskip+4mm}
\div \ \mathbf{u}^{eff} = 0 \qquad \hbox{ in } \O_1 ,  \qquad \int_{\O_1} p^{eff} \ dx =0,\label{4.92}\\ \mathbf{u}^{eff} =
0  \qquad  \hbox{ on } (0,L) \times  \{h\}  %,\label{4.93}\\
; \quad \mathbf{u}^{eff} \; \hbox{ and }  \;  p^{eff} \quad  \hbox{ are } \;
L-\hbox{periodic in} \quad x_1, \label{4.94}\\ u^{eff}_2 = 0  \qquad
\hbox{ and } \quad u^{eff}_1 + \ep C^{bl}_1 \frac{\p u^{eff}_1 }{ \p
x_2 } =0 \quad \hbox{ on } \quad \Sigma  . \label{4.95}\end{gather}
We note that the second boundary condition in (\ref{4.95}) is the {\bf  law by Beavers and Joseph} from \cite{BJ}. The constant $C^{bl}_1$ is strictly negative and calculated through (\ref{4.15}), from the viscous boundary layer described in Appendix 2.\vskip1pt
 Problem (\ref{4.91})-(\ref{4.95}) has a unique solution, which in the case of Poiseuille flows (i.e. when $\mathbf{f} = -\displaystyle \frac{p_b - p_0}{L} \mathbf{e}^1 $) reads
\begin{equation}\label{4.96}
\mathbf{u}^{eff}_{pois} = \bigg( { \frac{p_b - p_0 }{ 2 L   }} \bigl( x_2  - {
\frac{\ep C^{bl}_1 h}{ h - \ep C^{bl}_1 }} \bigr) (x_2 - h) , 0
\bigg) \; \hbox{  for } \;  0\leq x_2 \leq h ; \quad p^{eff}= 0  \;
\hbox{  for } \; 0\leq x_1 \leq L .
\end{equation}
 The {\bf effective mass flow rate}
through the channel is then
\begin{gather}
 M^{eff} =\int_{\O_1} u^{eff}_1  \ d x, \label{4.97}\\
\hskip-2cm \mbox{ which for the Poiseuille flow reads} \qquad
   M^{eff}_{pois} =  - \frac{p_b - p_0 }{ 12    }
h^3 \frac{h- 4 \ep C^{bl}_1 }{ h - \ep C^{bl}_1 }  .
\end{gather}
\begin{theorem} \label{P4.19} Let us suppose $\mathbf{f} \in C^\infty ({\overline \O})^2$ and $L$-periodic with respect to $x_1$. For $\{ \ve , p^\ep \} $ given by (\ref{1.3})-(\ref{1.5}) and $\{ \mathbf{u}^{eff} , p^{eff} \} $ by (\ref{4.91})-(\ref{4.95}). It holds
\begin{gather}
 \Vert  \ve - \mathbf{u}^{eff}  \Vert_{L^{2} (\O_1 )^2} + \vert  M^\ep - M^{eff}  \vert \leq
C\ep^{3/2} \label{4.100A} \\
 \Vert  \ve - \mathbf{u}^{eff}  \Vert_{H^{ 1/2 } (\O_1 )^2} +\| p^\ep - p^{eff}   \|_{L^{1}(\O_1)}
+\|  \nabla (\ve - \mathbf{u}^{eff} ) \|_{L^{1}(\O_1)^4}
 +\notag \\
    \| | x_2 |^{1/2} \nabla (\ve - \mathbf{u}^{eff} ) \|_{L^{2}(\O_1)^4}
  + \| | x_2 |^{1/2} (p^\ep - p^{eff} )  \|_{L^{2}(\O_1)^2} \leq
C\ep , \label{4.100}\end{gather}
with $M^{eff}$ defined in (\ref{4.97}).
\end{theorem}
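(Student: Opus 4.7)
The plan is to build a two-scale approximation $(\mathbf{U}^\ep, P^\ep)$ of $(\ve, p^\ep)$ that coincides with $(\mathbf{u}^{eff}, p^{eff})$ to leading order in $\O_1$, is essentially zero in $\Oe_2$, and is glued together by a Navier boundary layer $(\beta^{bl}, \omega^{bl})$ of Appendix 2 concentrated near $\Sigma$. Writing $\psi(x_1):=\p u_1^{eff}/\p x_2(x_1,0)$, the velocity corrector is taken of the form $-\ep\,\psi(x_1)\,\beta^{bl}(x/\ep)$ (stabilized in $\O_1$ by the far-field value $(C^{bl}_1,0)$), and the pressure corrector is $\psi(x_1)\,\omega^{bl}(x/\ep)$. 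The Beavers--Joseph condition (\ref{4.95}) is designed precisely so that the trace of $\mathbf{U}^\ep$ on $\Sigma$ vanishes to leading order, while the far-field jump of $\omega^{bl}$ across $\Sigma$ carries the effective pressure jump announced in the abstract.

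Next I would plug $(\ve-\mathbf{U}^\ep, p^\ep-P^\ep)$ into (\ref{1.3})--(\ref{1.5}) and sort the residuals into three types: interior errors $\ep\,\psi'(x_1)\bigl(\beta^{bl}-(C^{bl}_1,0)\bigr)$ that are $L^2$-small thanks to the exponential decay of $\beta^{bl}$ away from $\Sigma$; a divergence defect $\div\mathbf{U}^\ep$ supported in a strip of width $O(\ep)$ around $\Sigma$; and small trace mismatches on $S^\ep$, on $\{x_2=h\}$ and on the lateral periodic boundary, absorbed by a local correction. Testing the error equation against $\ve-\mathbf{U}^\ep$ in (\ref{1.6}) and combining with the Poincar\'e inequalities (\ref{Poinc1})--(\ref{Poinc2}) should yield $\|\nabla(\ve-\mathbf{U}^\ep)\|_{L^2(\Oe)^4}=O(\ep^{1/2})$ and $\|\ve-\mathbf{U}^\ep\|_{L^2(\Oe_2)^2}=O(\ep^{3/2})$. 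The boundary layer ansatz itself satisfies $\|\mathbf{U}^\ep-\mathbf{u}^{eff}\|_{L^2(\O_1)^2}=O(\ep^{3/2})$ by support counting in the width-$\ep$ layer; upgrading the $H^1$-error in $\O_1$ to an $L^2$-error of the same order $\ep^{3/2}$ would be done by an Aubin--Nitsche-type duality against the Stokes problem in $\O_1$ supplemented with the BJ condition on $\Sigma$, yielding (\ref{4.100A}) and, upon integration, the mass flow rate estimate.

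The hard part, and the genuinely new ingredient beyond the Poiseuille analysis of \cite{JM00}, is the pressure estimate, because $\mathbf{U}^\ep$ is not divergence-free and the usual trick of restricting to solenoidal test functions is unavailable. My plan is to use the very weak formulation recalled in Appendix 1: for a test $q\in L^\infty(\O_1)$ with zero mean, I would construct a Bogovskii lift $\Phi\in H^1_0(\O_1)^2$ with $\div\Phi=q$, extend it by zero across $\Sigma$, plug it into (\ref{1.6}) after subtracting its analogue for $P^\ep$, and bound the resulting pairing $\int_{\O_1}(p^\ep-p^{eff})q\,dx$ by the already controlled residuals paired with $\nabla\Phi$. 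The dominant cancellation at the interface is produced exactly by the far-field constant of $\omega^{bl}$, which is identified with the physical pressure jump. Because the divergence defect $\div\mathbf{U}^\ep$ has $L^2$ size $\ep^{1/2}$ concentrated in a layer where a Poincar\'e weight must be paid, only an $L^1(\O_1)$ bound of order $\ep$ on $p^\ep-p^{eff}$ survives this accounting, which is precisely the form of the result (\ref{4.100}).

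Finally, the weighted $|x_2|^{1/2}$ estimates in (\ref{4.100}) quantify the concentration of the error in a strip of width $O(\ep)$ around $\Sigma$: although the corrector is $O(1)$ there, its $L^2$-norm against the weight $x_2$ is $O(\ep)$ by exponential decay of $\beta^{bl}$ and $\omega^{bl}$. These would be derived by redoing the energy argument after multiplication by a smooth cutoff proportional to $|x_2|$ which vanishes at $\Sigma$; the weight damps exactly the boundary layer singularity and upgrades the unweighted $O(\ep^{1/2})$ gradient error to an $O(\ep)$ weighted $L^2$ bound, and similarly for the pressure. The $H^{1/2}(\O_1)$ estimate on the velocity error then follows from interpolation between the $L^2(\O_1)$ bound of order $\ep^{3/2}$ and the weighted $H^1$ bound of order $\ep$.
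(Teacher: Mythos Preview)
Your overall architecture---build a boundary-layer corrector driven by the interfacial shear, derive an energy estimate on the error, then upgrade via duality---matches the paper's strategy. Two structural differences are worth noting. First, the paper does not expand around $\mathbf{u}^{eff}$ directly: it uses the $\ep$-independent no-slip solution $\mathbf{v}^0$ of (\ref{4.37})--(\ref{4.40}) as base, adds the boundary layer and a Couette-type counterflow $\mathbf{z}^\sigma$, and only afterwards compares $\mathbf{v}^0-\ep C^{bl}_1\mathbf{z}^\sigma$ to $\mathbf{u}^{eff}$ via a separate estimate (Proposition~\ref{proxio}). Second, the divergence defect is not absorbed by an unspecified ``local correction'': the paper constructs an explicit corrector $\mathbf{Q}^{bl}$ solving (\ref{Div1})--(\ref{Div3}), couples it with auxiliary Stokes problems (\ref{Eqwkj}), (\ref{EqPhi1})--(\ref{EqPhi11}), and then applies Tartar's restriction operator $R_\ep$ to obtain a genuinely divergence-free error $\overline{\mathcal U}^\ep$. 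This is what pushes the gradient estimate to $O(\ep)$ rather than the $O(\ep^{1/2})$ you obtain.

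Where your proposal has a real gap is in the derivation of the $H^{1/2}$, $L^1$ and $|x_2|^{1/2}$-weighted bounds in (\ref{4.100}). The paper does \emph{not} get these by a cutoff-weighted energy argument followed by interpolation. Instead it observes that on $\Sigma$ one has $\ve-\mathbf{u}^{eff}={\cal U}^\ep-(\beta^{bl,\ep}-\ep(C^{bl}_1,0))\sigma^0_{12}$, bounds this trace in $L^2(\Sigma)$ by $C\ep$, and then invokes the regularity theory for the Stokes system with merely $L^2$ Dirichlet data (Propositions~\ref{D2}--\ref{D3}, based on Conca and on Fabes--Kenig--Verchota/Brown--Shen). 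Proposition~\ref{D3} gives precisely the $H^{1/2}$ and $|x_2|^{1/2}$-weighted $L^2$ bounds directly from the $L^2$ trace, and Proposition~\ref{D2} covers the $L^1$-type estimates. Your final sentence---interpolating between the $L^2$ bound at $\ep^{3/2}$ and the \emph{weighted} $H^1$ bound at $\ep$ to obtain $H^{1/2}$---does not work: $L^2(\O_1)$ and $H^1$ with weight $|x_2|^{1/2}$ do not interpolate to unweighted $H^{1/2}$. If you instead interpolate $L^2$ at $\ep^{3/2}$ with the unweighted $H^1$ bound, you need the latter at order $\ep$ (which you have not established), not $\ep^{1/2}$; and your proposed cutoff argument for the weighted gradient bound is too vague to substitute for the Fabes--Kenig--Verchota estimate. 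The clean route is the one in Appendix~1: control the trace on $\Sigma$, then read off all the refined interior bounds from the very weak solution theory.
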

\noindent Next, we study the situation in the porous medium $\O_2$.

%The effective pressure ${\tilde p}^0$ is defined through the boundary value problem

\begin{theorem} \label{Limitpress}  Let the permeability tensor $K$ be given by (\ref{1.58}).
The effective porous media pressure ${\tilde p}^0 $  is the $L-$ periodic in $x_1$ function satisfying
\begin{gather}
    \mbox{ div } \bigg( K  (\mathbf{f} (x) - \nabla  {\tilde p}^0 )\bigg) =0\; \mbox{ in } \; \O_2  \label{Presspm} \\
 {\tilde p}^0  = p^{eff} + C^{bl}_\omega  \frac{\p u^{eff}_1}{ \p x_2 } (x_1 , 0 )  \; \mbox{ on } \; \Sigma; \quad K  (\mathbf{f} (x) - \nabla  {\tilde p}^0 ) |_{\{ x_2 =-H \} } \cdot \mathbf{e}^2 =0,  \label{Presspm2}
\end{gather}
with $\mathbf{u}^{eff}$ being the solution to the problem (\ref{4.91})-(\ref{4.95}) and $C_\omega^{bl}$ being the pressure stabilization constant defined by (\ref{4.19}).
In addition
we have
\begin{gather}
    \frac{1 }{\ep^2}  \ve  - K  (\mathbf{f} - \nabla  {\tilde p}^0 ) \rightharpoonup 0 \; \mbox{ weakly in } \; L^2 ((0,L)\times (-H, -\delta ) )^2 , \quad \mbox{ as } \; \ep \to 0, \quad \forall \delta >0;   \label{ConcDarcy}\\
    p^\ep - {\tilde p}^0 \to 0  \; \mbox{ strongly in } \; L^2 (\O_2 ) , \quad \mbox{ as } \; \ep \to 0; \label{ConcPression} \\
    || p^\ep - p^{eff} ||_{H^{-1/2} (\Sigma)} \leq C \sqrt{\ep} .\label{EstPressBdry}
\end{gather}
\end{theorem}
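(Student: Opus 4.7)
The plan is to combine a standard two-scale Darcy homogenization in $\O_2$ with the Navier-type boundary layer from the appendix, matched across $\Sigma$. As a first step I would establish a priori bounds on $\{\ve,p^\ep\}$ in $\Oe_2$. Testing (\ref{1.6}) with $\ve$ and using the Poincar\'e inequality (\ref{Poinc2}) together with the $L^2(\O_1)$-bound from Theorem~\ref{P4.19} yields $\|\nabla\ve\|_{L^2(\Oe_2)^4}=O(\ep)$ and $\|\ve\|_{L^2(\Oe_2)^2}=O(\ep^2)$, so $\ve/\ep^2$ is bounded in $L^2(\Oe_2)^2$---the scaling consistent with Darcy's law. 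A companion bound on $p^\ep$ in $L^2(\Oe_2)$ follows by a Bogovski\u\i{} argument adapted to the perforated domain.

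Next I would introduce the two-scale ansatz $\ve(x)\approx \ep^2\sum_{j=1}^2 \mathbf{w}^j(x/\ep)\bigl(f_j(x)-\p_j\tilde p^0(x)\bigr)$ and $p^\ep(x)\approx \tilde p^0(x)+\ep\sum_j \pi^j(x/\ep)\bigl(f_j-\p_j\tilde p^0\bigr)$, where $\{\mathbf{w}^j,\pi^j\}$ solve the standard Stokes cell problems on $Y_F$ that produce the permeability $K$ in (\ref{1.58}). Passing to the limit in (\ref{1.6}) with test functions compactly supported in $\O_2$ identifies the bulk Darcy equation (\ref{Presspm}); the impermeable-bottom condition $K(\mathbf{f}-\nabla\tilde p^0)\cdot\mathbf{e}^2=0$ on $\{x_2=-H\}$ transfers from $\ve=0$ there, and $L$-periodicity in $x_1$ is inherited directly. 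This already gives (\ref{ConcDarcy}) on compact subsets $(0,L)\times(-H,-\delta)$ via Tartar's classical argument, because any boundary layer correction from $\Sigma$ decays exponentially in $|y_2|$ and only the bulk Darcy term survives there.

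The new ingredient is the boundary condition on $\Sigma$. Since $\mathbf{u}^{eff}|_\Sigma$ does not vanish---the Beavers--Joseph condition (\ref{4.95}) is only first-order in $\ep$---I would correct the ansatz by the Navier boundary layer $(\beta^{bl},\omega^{bl})$ of the appendix, the very same object already used to compute $C^{bl}_1$ in Theorem~\ref{P4.19}. Its pressure $\omega^{bl}$ stabilizes to $0$ as $y_2\to+\infty$ and to a nonzero constant $C^{bl}_\omega$ as $y_2\to-\infty$; multiplied by the driving shear $\p u^{eff}_1/\p x_2|_\Sigma$, this is precisely the jump that the Stokes pressure must absorb to match the Darcy pressure, yielding the interface condition in (\ref{Presspm2}). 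To make this rigorous I would test the difference between $p^\ep$ and the corrected approximation against $L$-periodic smooth functions extended from $\Sigma$ into $\O_2$ by a Darcy-type lift, integrate by parts, and track the contribution of $\omega^{bl}$ as the layer is translated along $\Sigma$; the remaining terms are absorbed by the already-established $O(\ep)$ velocity and gradient bounds.

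Once $\tilde p^0$ is identified, the strong convergence (\ref{ConcPression}) follows by combining weak convergence with uniqueness for (\ref{Presspm})--(\ref{Presspm2}) and the energy estimate for the pressure. The trace bound (\ref{EstPressBdry}) is then a consequence of the weighted bound $\||x_2|^{1/2}(p^\ep-p^{eff})\|_{L^2(\O_1)}=O(\ep)$ from Theorem~\ref{P4.19} via a standard weighted trace inequality onto $\Sigma$. The main obstacle I anticipate is the third step: isolating $C^{bl}_\omega$ as the \emph{unique} source of the pressure jump. The velocity corrector is \emph{not} divergence-free near $\Sigma$ (this is precisely the artificial compressibility highlighted in the abstract), and the naive pressure test integration produces an $O(1)$ parasitic term. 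Controlling it requires choosing the test-function extensions into $\O_2$---and auxiliary Bogovski\u\i{} correctors in the perforated strip near $\Sigma$---so that the weak divergence of the velocity ansatz pairs only with lower-order terms, leaving $C^{bl}_\omega\,\p u^{eff}_1/\p x_2$ as the sole surviving contribution.
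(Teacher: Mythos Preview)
Your overall strategy---two-scale Darcy homogenization in $\O_2$ matched across $\Sigma$ by the Navier boundary layer---is indeed the paper's strategy, but two points in your outline are genuinely wrong.

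First, you have the stabilization of $\omega^{bl}$ reversed. In the paper's normalization (Corollary~\ref{C4.4} and Remark~\ref{R4.9}), $\omega^{bl}\to C^{bl}_\omega$ as $y_2\to+\infty$ (free-fluid side) and $\omega^{bl}\to\kappa_\infty=0$ as $y_2\to-\infty$ (porous side). This is why the pressure corrector in (\ref{BJ4.67}) reads $(\omega^{bl,\ep}-H(x_2)C^{bl}_\omega)\sigma^0_{12}$: one subtracts $C^{bl}_\omega$ only in $\O_1$. Your reversed convention would flip the sign of the jump in (\ref{Presspm2}).

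Second, and more seriously, your route to the trace estimate (\ref{EstPressBdry}) does not work. The weighted bound $\||x_2|^{1/2}(p^\ep-p^{eff})\|_{L^2(\O_1)}\leq C\ep$ from Theorem~\ref{P4.19} controls a norm that is \emph{weaker} near $\Sigma$; there is no ``weighted trace inequality'' extracting an $H^{-1/2}(\Sigma)$ bound from it, since functions in $L^2(|x_2|\,dx)$ have no trace on $\{x_2=0\}$. The paper instead uses the Stokes structure: from the variational equation for $\{\mathcal{U}^\ep,\mathcal{P}^\ep\}$ and Corollary~\ref{T4.17c} one sees that $\nabla\mathcal{U}^\ep_2-\mathcal{P}^\ep\mathbf{e}^2$ (up to lower-order terms) is $O(\ep)$ in $L^2(\O_1)$ \emph{together with its divergence}, so its normal trace $\frac{\p\mathcal{U}^\ep_2}{\p x_2}-\mathcal{P}^\ep$ is $O(\ep)$ in $H^{-1/2}(\Sigma)$. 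Then $\frac{\p\mathcal{U}^\ep_2}{\p x_2}=\mbox{div}\,\mathcal{U}^\ep-\frac{\p\mathcal{U}^\ep_1}{\p x_1}$ and the $L^2(\Sigma)$ bound on $\mathcal{U}^\ep$ from (\ref{4.84}) isolate $\mathcal{P}^\ep$. The pressure trace is only accessible as part of the normal stress; a purely interior weighted bound cannot see it.

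A smaller structural remark: the paper does not redo the two-scale limit from scratch but works throughout with the error pair $\{\mathcal{U}^\ep,\mathcal{P}^\ep\}$ of (\ref{BJ4.66})--(\ref{BJ4.67}). Theorem~\ref{T4.17} (where the compressibility defect you worry about is already fixed via the corrector $\mathbf{Q}^{bl}$) and Corollary~\ref{C1.6} (Tartar's restriction operator plus the pressure extension (\ref{1.27P})) give $L^2(\O_2)$-compactness of the extended $\tilde{\mathcal{P}}^\ep$. Passing to the limit in (\ref{UPsystem}) with oscillating test functions supported in $\O_2$ gives the Darcy relation, and then with test functions crossing $\Sigma$ forces the limit pressure correction $\mathcal{P}^{imp}$ to vanish on $\Sigma$, which is exactly (\ref{Presspm2}). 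You should lean on Theorem~\ref{T4.17} rather than rebuild it.
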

 \begin{remark} If we include the vicinity of $\Sigma$ the velocity $\ve$ has to be corrected by a boundary layer term $\beta^{bl, \ep} (x) =\ep \beta^{bl} (x/\ep )$, defined through (\ref{BJ4.2})-(\ref{4.6}), and the convergence result (\ref{ConcDarcy}) reads
\begin{equation}\label{ConvGlo}
    \frac{1 }{\ep^2} \big( \ve + \beta^{bl , \ep}  \frac{\p u^{eff}_1}{ \p x_2 } (x_1 , 0 ) \big) - K  (\mathbf{f} (x) - \nabla  {\tilde p}^0 ) \rightharpoonup 0 \; \mbox{ weakly in } \; L^2 (\O_2 )^2 , \quad \mbox{ as } \; \ep \to 0.
\end{equation}
\end{remark}
\begin{remark}
\label{R4.21}  Let $\O_{a\ep} = (0,L)\times ( a\ep , h)$
for $a<0$ and let $\{ u^{a, eff} , p^{a, eff } \}$ be a solution
for (\ref{4.91})-(\ref{4.95}) in $\O_{a\ep}$, with (\ref{4.95}) replaced by
\begin{equation}\label{4.103}
   u^{a, eff}_2 = 0  \qquad \hbox{ and } \quad u^{a, eff}_1 + \ep
C^{a, bl}_1
 \frac{\p u^{a, eff}_1 }{ \p x_2 } =0 \quad \hbox{ on } \quad \Sigma_{a} =
(0,b) \times {a\ep}.
\end{equation}
Problem (\ref{4.91})-(\ref{4.94}), (\ref{4.103}) has a unique smooth solution $\{ u^{a,
eff} , p^{a, eff } \}$, its derivatives are bounded independently of $\ep$  and, by (\ref{4.101}),
$C^{a, bl}_1 = C^{bl}_1 - a$. Then a simple calculation gives
\begin{gather} 0= u^{a, eff}_1 (x_1 , \ep a )+ \ep
C^{a, bl}_1
 \frac{\p u^{a, eff}_1 }{ \p x_2 } (x_1 , \ep a) = u^{a, eff}_1 (x_1 , 0 )+ \ep
C^{bl}_1
 \frac{\p u^{a, eff}_1 }{ \p x_2 } (x_1 , 0) +\notag \\
 \frac{(\ep a )^2}{2} (\frac{\p^2 u^{a, eff}_1 }{ \p x_2^2 } (x_1 , \xi_1 ) + \frac{\p^2 u^{a, eff}_1 }{ \p x_2^2 } (x_1 , \xi_2 ) ) ,\quad
\mbox{ for } \quad \xi_1 , \xi_2 \in (0, \ep a).\notag
\end{gather} Therefore, a perturbation of the interface position for an
$O(\ep)$ implies a perturbation in the solution of $O(\ep^2)$ in $ H^k (\O_1)$.
Consequently, there is a freedom in fixing  position of $\Sigma$.
It influences the result only at the next order of the asymptotic
expansion.

{The physical permeability $ K_{phys} $ is proportional to $\varepsilon^2$. Our result on the influence of the interface position  on the effective slip  is in agreement with the observation of Kaviany  in \cite{K95},  pages 79-83. In fact, it has been noticed by Larson and   Higdon in \cite{LH86} that changes of $O(1)$ in the slip coefficients are  possible, after the change of order $O(\sqrt{ K_{phys}) }$ of the interface  position.  Therefore, the  exact position of $\Sigma$ does not pose 
problems, since it influences the solution only at order $O(K_{phys})$.}\end{remark}
\section{Law by Beavers and Joseph}\label{BJSec}

  \vskip0pt In this section we extend the
justification of the law (\ref{AUX}) from \cite{JM00} to the case with a general body force. Our boundary
conditions are simpler from those of the experiment from \cite{BJ} and  we consider the 2D
Stokes system.  The Beavers and Joseph setting could be reduced to our setting if $\O$  is sufficiently
long in $x_1$ direction. Then we may assume the periodic boundary conditions at
inlet/outlet boundary and the flow is  governed by a force coming
from the pressure drop and is equal to $\displaystyle \frac{p_b -p_0
}{b} \mathbf{e}^1$. We assume a non-constant force, which can describe a larger class of the problems.\subsection{The impermeable interface approximation}\label{impp}
Intuitively, the main flow is in the unconfined domain $\O_1$. Following the approach from \cite{JM00} we study the problem
\begin{gather}
- \triangle \mathbf{v}^0  + \nabla p^0 = \mathbf{f}
\qquad \hbox{ in } \O_1 ,\label{4.37}\\
%\noalign{\vskip+4mm}
\div \  \mathbf{v}^0 = 0 \qquad \hbox{ in } \O_1 ,\label{4.38}\\
%\noalign{\vskip+4mm}
\mathbf{v}^0 = 0  \qquad  \hbox{ on } \p   \O_1  \setminus \bigg(  \{ x_1 = 0 \} \cup \{ x_1 =L \} \bigg) \quad
,\label{4.39}\\ \{ \mathbf{v}^0 , p^0 \}    \qquad  \hbox{ is } \;
L-\hbox{periodic in } \; x_1 \label{4.40}\end{gather}
 Problem (\ref{4.37})-(\ref{4.40}) has
a unique solution $\{ \mathbf{v}^0 , p^0 \} \in H^1 (\O_1)^2 \times L^2_0
(\O_1) $ (see e.g. book \cite{Tem}). In fact this solution is $C^\infty$ for $\mathbf{f} \in C^\infty$.
Therefore, for the lowest order
approximation $\{ \mathbf{v}^0 , p^0 \}$ we impose on the interface the no-slip
condition
\begin{equation}\label{BJ3}
    \mathbf{v}^0 = 0 \qquad \mbox{on} \quad \Sigma.
\end{equation}
 \vskip1pt We observe that in the Beavers and Joseph setting $ \mathbf{f} =-\displaystyle \frac{p_b -p_0}{  L} \mathbf{e}^1$ and the unique
solution for this problem in $H^1 (\O_1)^2 \times L^2_0 (\O_1) $ is
the classic Poiseuille flow in $\O_1$, satisfying the no-slip
conditions at $\Sigma$. It is given by
\begin{equation}\label{4.41}
    %\begin{cases}
\mathbf{v}^0 = \bigg( {\displaystyle \frac{p_b - p_0 }{ 2 L   }} x_2 (x_2 -
h) , 0\bigg)
\; \hbox{  for} \quad 0\leq x_2 \leq h ; \qquad % \noalign{\vskip+4mm}
 p^0 = 0 \; \hbox{  for }\quad 0\leq x_1 \leq L% \end{cases}
\end{equation}
(see \cite{JM00} and \cite{JaegMik09} for further details). %${\displaystyle \underset{\ep \to 0}{\rightarrow}}$
\vskip2pt
 We extend  $\mathbf{v}^0$  to $\O_2$ by setting
$v^0 =0$ for $-H \leq x_2 < 0$. For
$p^0$ we use a smooth extension to $\O_2$, ${\tilde p}^0$, which we shall precise. The question is in which sense this solution approximates the
solution $\{ \ve , p^\ep \}$ of the original problem
(\ref{1.3})-(\ref{1.5}).

%----------------------------------------------
Direct consequence of the weak formulation (\ref{1.6}) is that the difference $\ve - \mathbf{v}^0$ satisfies the following variational equation:
\begin{equation}\label{H4.57}
 \int_{\Oe} \nabla (\ve - \mathbf{v}^0 ) \nabla \varphi \ dx -\int_{\Oe}  ( p^\ep - {\tilde p}^0 ) \mbox{ div } \varphi   = \int_{\Sigma}  \frac{\p   v_1^0}{ \p   x_2 }   \varphi_1 \ dS  -  \int_{\Sigma}  [{\tilde p}^0 ]  \varphi_2 \ dS + \int_{\Oe_2}(\mathbf{f} - \nabla {\tilde p}^0 ) \varphi \ dx  , \ \forall \varphi \in   { W}^\ep .
\end{equation}
Taking $\varphi =\ve - \mathbf{v}^0$ in (\ref{H4.57}) and applying
Lemma \ref{L1} leads to the following  result,
proved in \cite{JM00}:
\begin{proposition} \label{P4.14} Let $\{
\ve , p^\ep \}$ be the solution  for (\ref{1.3})-(\ref{1.5}) and $\{ \mathbf{v}^0 , p^0 \}$ defined by (\ref{4.37})-(\ref{4.40}). Then, it holds
\begin{gather}
\sqrt{\ep} \Vert \nabla (\ve - v^0 ) \Vert_{L^2 (\Oe )^4} +
%\sqrt{\ep} \Vert p^\ep  - pi^0 \Vert_{L^2 (\O_1 ) }
%\leq C\sqrt{\ep}\label{4.52}\\
  \frac{1}{\sqrt{\ep}}\Vert \ve  \Vert_{L^2 (\Oe_2 )^2} %\leq C \ep \sqrt{\ep}
 %\label{4.53}\\
 +\Vert \ve  \Vert_{L^2 (\Sigma )} %+ \Vert \ve - v^0 \Vert_{L^2 (\O_1 )^2}
\leq C \ep
 \label{4.52}\end{gather}
\end{proposition}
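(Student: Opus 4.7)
The plan is to substitute $\varphi = \ve - \mathbf{v}^0$ into (\ref{H4.57}) and close the resulting estimate using the scale-sensitive Poincar\'e inequalities of Lemma~\ref{L1}.

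First I verify that $\varphi$ is an admissible test function in $W^\ep$. Extending $\mathbf{v}^0$ by zero to $\Oe_2$ produces an $H^1(\Oe)$ field, because the trace $\mathbf{v}^0|_\Sigma$ vanishes by (\ref{4.39}); the extension is $L$-periodic in $x_1$ and vanishes on $S^\ep$ and on $(0,L)\times\{h,-H\}$, so indeed $\ve - \mathbf{v}^0 \in W^\ep$. Moreover $\div(\ve - \mathbf{v}^0)$ vanishes separately in $\O_1$ and $\Oe_2$, and the no-slip condition $v_2^0|_\Sigma = 0$ rules out a distributional contribution from $\Sigma$, so $\div\,\varphi = 0$ in $\Oe$. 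In particular the pressure term in (\ref{H4.57}) drops out, and since $\mathbf{v}^0 \equiv 0$ in $\Oe_2$ and $\mathbf{v}^0|_\Sigma = 0$, the identity collapses to
\begin{equation*}
\int_{\Oe} |\nabla(\ve - \mathbf{v}^0)|^2\,dx = \int_\Sigma \frac{\p v_1^0}{\p x_2}\, \ve_1\,dS - \int_\Sigma [\tilde p^0]\, \ve_2\,dS + \int_{\Oe_2}(\mathbf{f} - \nabla \tilde p^0)\cdot \ve\,dx.
\end{equation*}

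The smoothness of $\mathbf{v}^0$, $p^0$, $\tilde p^0$, and $\mathbf{f}$ gives uniform bounds for the coefficients $\p v_1^0/\p x_2|_\Sigma$, $[\tilde p^0]|_\Sigma$, and $\mathbf{f} - \nabla \tilde p^0$ in the appropriate $L^2$ norms. Since $\ve$ vanishes on $S^\ep$, Lemma~\ref{L1} yields $\|\ve\|_{L^2(\Sigma)} \leq C\sqrt{\ep}\,\|\nabla \ve\|_{L^2(\Oe_2)^4}$ and $\|\ve\|_{L^2(\Oe_2)^2} \leq C\ep\, \|\nabla \ve\|_{L^2(\Oe_2)^4}$. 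Because $\mathbf{v}^0 \equiv 0$ on $\Oe_2$, $\|\nabla \ve\|_{L^2(\Oe_2)^4} \leq \|\nabla(\ve - \mathbf{v}^0)\|_{L^2(\Oe)^4}$, and each term on the right-hand side is therefore bounded by $C\sqrt{\ep}\,\|\nabla(\ve - \mathbf{v}^0)\|_{L^2(\Oe)^4}$. Writing $X = \|\nabla(\ve - \mathbf{v}^0)\|_{L^2(\Oe)^4}$, I obtain $X^2 \leq C\sqrt{\ep}\, X$, hence $X \leq C\sqrt{\ep}$; multiplying by $\sqrt{\ep}$ yields the first contribution in (\ref{4.52}), while feeding the bound on $X$ back into the two Poincar\'e inequalities produces $\|\ve\|_{L^2(\Oe_2)^2} \leq C\ep^{3/2}$ and $\|\ve\|_{L^2(\Sigma)} \leq C\ep$, which are exactly the remaining two contributions.

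The only delicate point is checking that $\varphi = \ve - \mathbf{v}^0$ is legitimately a divergence-free element of $W^\ep$ after zero extension across $\Sigma$; this reduces to the no-slip data $v_2^0|_\Sigma = 0$. Once the test function is in place, everything rests on the two $\ep$-scaled inequalities of Lemma~\ref{L1}, which transfer the smallness forced on $\ve$ inside $\Oe_2$ by the no-slip condition on the solid periodic inclusions to the interface $\Sigma$ and thereby absorb the boundary terms on $\Sigma$ into the dissipation on the left.
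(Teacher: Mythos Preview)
Your proof is correct and follows exactly the approach indicated in the paper: test (\ref{H4.57}) with $\varphi = \ve - \mathbf{v}^0$, observe that the pressure term drops because $\varphi$ is divergence-free, and close the estimate using the $\ep$-scaled Poincar\'e inequalities of Lemma~\ref{L1}. The paper merely sketches this and refers to \cite{JM00}; your write-up supplies the details faithfully, including the verification that the zero extension of $\mathbf{v}^0$ across $\Sigma$ lies in $W^\ep$ and remains divergence-free.
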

Furthermore, using estimate (\ref{4.52}) and the notion of  very weak solutions for the Stokes system in $\O_1$, introduced in \cite{CO} (see also Appendix 1),  we conclude the following additional estimates:
\begin{corollary} \label{veryweak1} (see \cite{JM00}) Let $\{
\ve , p^\ep \}$ be the solution  for (\ref{1.3})-(\ref{1.5}) and
$\{ \mathbf{v}^0 , p^0 \}$ defined by (\ref{4.37})-(\ref{4.40}). Then, it holds
\begin{gather}
%\sqrt{\ep} \Vert \nabla (\ve - v^0 ) \Vert_{L^2 (\Oe )^4} +
\sqrt{\ep} \Vert p^\ep  - p^0 \Vert_{L^2 (\O_1 ) }
%\leq C\sqrt{\ep}\label{4.52}\\
 %+ \frac{1}{\sqrt{\ep}}\Vert \ve  \Vert_{L^2 (\Oe_2 )^2} %\leq C \ep \sqrt{\ep}
 %\label{4.53}\\
 %+\Vert \ve  \Vert_{L^2 (\Sigma )}
+ \Vert \ve - v^0 \Vert_{L^2 (\O_1 )^2} \leq C \ep.
 \label{vw1}\end{gather}
\end{corollary}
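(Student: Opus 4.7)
The plan is to combine the interface trace and gradient estimates of Proposition~\ref{P4.14} with the theory of very weak solutions for the Stokes system in $\O_1$. In $\O_1$ both pairs $(\ve,p^\ep)$ and $(\mathbf{v}^0,p^0)$ satisfy the Stokes equations with the same body force $\mathbf{f}$, so the difference $\mathbf{w}^\ep = \ve - \mathbf{v}^0$, $q^\ep = p^\ep - p^0$ solves the homogeneous Stokes system $-\Delta \mathbf{w}^\ep + \nabla q^\ep = 0$, $\div \mathbf{w}^\ep = 0$ in $\O_1$, with $L$-periodicity in $x_1$, $\mathbf{w}^\ep = 0$ on $(0,L)\times\{h\}$, and Dirichlet data $\mathbf{w}^\ep = \ve|_\Sigma$ on $\Sigma$. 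The compatibility condition $\int_\Sigma \ve_2\, dS = 0$ follows by integrating $\div \ve = 0$ over $\O_1$ and using the no-slip and periodic boundary conditions on $\partial \O_1 \setminus \Sigma$.

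For the $L^2$ estimate on the velocity, I would apply the very weak solution theorem recalled in Appendix~1 (Conca \cite{CO}) to this homogeneous Stokes problem with Dirichlet data in $L^2(\Sigma)$. That theory yields the linear bound $\|\mathbf{w}^\ep\|_{L^2(\O_1)} \leq C\,\|\ve\|_{L^2(\Sigma)}$, and the right-hand side is controlled by $C\ep$ using Proposition~\ref{P4.14}. This gives $\|\ve - \mathbf{v}^0\|_{L^2(\O_1)} \leq C\ep$, which is the second piece of (\ref{vw1}).

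For the pressure estimate I would invoke a Bogovskii-type right inverse of the divergence together with the weak form of the difference problem. Given any $L$-periodic, mean-zero $q \in L^2(\O_1)$, one can construct $\varphi \in H^1(\O_1)^2$ that is $L$-periodic in $x_1$ and vanishes on $(0,L)\times(\{0\}\cup\{h\})$, satisfying $\div \varphi = q$ and $\|\varphi\|_{H^1(\O_1)} \leq C\|q\|_{L^2(\O_1)}$. Since $\varphi$ vanishes on $\Sigma$, it is an admissible test function, so subtracting the weak formulations of the two Stokes problems yields $\int_{\O_1} q^\ep q\, dx = \int_{\O_1} \nabla \mathbf{w}^\ep : \nabla \varphi \, dx$, and hence $|\int_{\O_1} q^\ep q\, dx| \leq \|\nabla \mathbf{w}^\ep\|_{L^2(\O_1)}\|\nabla \varphi\|_{L^2(\O_1)}$. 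By (\ref{1.4}) and the normalization built into (\ref{4.37})--(\ref{4.40}), both $p^\ep$ and $p^0$ have zero mean on $\O_1$, hence so does $q^\ep$; taking the supremum over admissible $q$ yields $\|q^\ep\|_{L^2(\O_1)} \leq C\|\nabla (\ve - \mathbf{v}^0)\|_{L^2(\O_1)} \leq C\sqrt{\ep}$ by (\ref{4.52}), which is the first piece of (\ref{vw1}).

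The main technical point to verify is that the Conca very weak theory and the Bogovskii lift both extend to the two-sided periodic cylinder geometry of $\O_1$; once this is checked, the quantitative bounds follow immediately from Proposition~\ref{P4.14}, and no further a priori analysis on $\Oe$ is needed.
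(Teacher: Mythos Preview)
Your argument is correct and follows the route the paper indicates: the velocity bound comes from the very weak solution theory of \cite{CO} applied to the homogeneous Stokes problem in $\O_1$ with the $L^2(\Sigma)$ Dirichlet datum $\ve|_\Sigma$ controlled by Proposition~\ref{P4.14}, and the pressure bound follows from the $H^1$ gradient estimate in (\ref{4.52}) via the standard Bogovski{\u\i}/duality argument. This is precisely what the paper means when it says the corollary follows from (\ref{4.52}) together with the notion of very weak solutions; the only minor remark is that your pressure step does not invoke the very weak framework at all but goes directly through a divergence lift, which is in fact the simplest way and is the same device the paper uses later (cf.\ the last line of the proof of Theorem~\ref{T4.17}).
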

 This provides the uniform a priori
estimates for $\{ \ve , p^\ep \} $. Moreover, we have found that the viscous flow in $\O_1$ corresponding to an impermeable wall  is an $O(\ep )$ $L^2$-approximation
for $\ve$.  Beavers and Joseph's law should correspond to the next
order velocity correction. Since the Darcy velocity is of order O$(\ep^2 )$ we
justify Saffman's version of the law. \vskip2pt

\subsection{Justification of the law
by Beavers and Joseph} \label{just}

At the interface $\Sigma$ the approximation from Subsection \ref{impp} leads to %we obtain the
 the shear
stress jump equal to $\displaystyle - \frac{\partial
v_1^{0}}{\partial x_2} |_{\Sigma}$. Contrary to the pressure difference, which could be  easily set to zero by the appropriate choice of ${\tilde p}^0$, the shear stress jump requires construction of the corresponding boundary layer.
For the intuitive argument how to obtain the shear stress jump correction using the natural stretching
variable  $\displaystyle
y=\frac{x}{\ep}$, we refer to the paper \cite{JaegMik09}, page 503.
In the present paper we present the rigorous construction, based on the Navier boundary layer and following the scheme originally used in \cite{JM00}.

Let $\{ \beta^{bl} , \omega^{bl} \}$ be the boundary layer given by
(\ref{BJ4.2})-(\ref{4.6}).\vskip0pt Now we set
\begin{equation}\label{4.59}
   \beta^{ bl, \ep} (x)= \ep \beta^{ bl} (\frac{x}{ \ep } ) \qquad
\hbox{and} \qquad \omega^{bl,\ep} (x)= \omega^{bl} (\frac{x}{ \ep }
), \quad x\in \Oe,
\end{equation}
  $\beta^{ bl ,\ep } $ is extended by
zero to  $\O \setminus \Oe $. Let $H$ be Heaviside's function. Then
for every $q\geq 1$ we obtain
\begin{equation}\label{4.60}
   % \begin{cases}
\frac{1}{\ep} \Vert \beta^{ bl ,\ep} - \varepsilon (C^{bl}_1 , 0) H(x_2) \Vert
_{L^q(\O)^2} %=C \ep^{1+1/q} , \qquad \forall q\geq 1&\hfill \cr
+ \Vert \omega^{bl,\ep} -C^{bl}_\omega H(x_2) \Vert _{L^q(\Oe)} +
\Vert \nabla \beta^{ bl , \ep } \Vert _{L^q(\O_1 \cup \Sigma \cup
\O_2 )^4} =C \ep^{1/q} .%, \quad \forall q\geq 1.%&\hfill \cr \end{cases}
\end{equation}
Hence, our boundary layer is not concentrated around the interface and
there are some stabilization constants. We will see that these
constants are closely linked to our effective interface law.
\vskip1pt
  As in \cite{JaMi2} stabilization of $\beta^{bl , \ep }$
towards a nonzero constant velocity $\ep \big( C^{bl}_1 , 0\big) $,
at the upper boundary,  generates a counterflow. It is given by the following
 Stokes system  in $\Omega_1$:
\begin{gather}
- \triangle \mathbf{z}^\sigma  + \nabla p^\sigma = 0
\qquad \hbox{ in } \O_1 ,\label{4.37Couette}\\
%\noalign{\vskip+4mm}
\div \  \mathbf{z}^\sigma = 0 \qquad \hbox{ in } \O_1 ,\label{4.38Couette}\\
%\noalign{\vskip+4mm}
\mathbf{z}^\sigma = 0  \quad  \hbox{ on }  \{ x_2 = h \} \quad \mbox{and} \; \mathbf{z}^\sigma =\frac{\p   v^0_1 }{ \p x_2 }
|_\Sigma  \mathbf{e}^1 \quad  \hbox{ on }  \{ x_2 = 0 \}
,\label{4.39Couette}\\ \{ \mathbf{z}^\sigma , p^\sigma \}    \qquad  \hbox{ is } \;
L-\hbox{periodic in } \; x_1 .\label{4.40Couette}\end{gather}
 In  the setting of the experiment by Beavers and Joseph, $\mathbf{z}^\sigma$ was proportional to the
two dimensional Couette flow $\mathbf{d}={\displaystyle
(1- \frac{x_2 }{ h}})  \mathbf{e}^1 $.\vskip1pt

Now,  after \cite{JaMi2}, we  expected that the approximation for the velocity reads
%to be  $ o(\ep )$ for the velocity and $O(1)$for the pressure:
\begin{gather}
%{\cal U}^\ep (x) =
\ve = \mathbf{v}^0  -  (\beta^{ bl , \ep} - \ep ( C^{bl}_1 ,0 ))  \frac{\p   v^0_1 }{ \p x_2 }
|_\Sigma -   \ep  C^{bl}_1  \mathbf{z}^\sigma + O(\ep^2 ), \label{4.66} %\\ {\cal P}^\ep = \label{4.67}
\end{gather}
\vskip2pt  Concerning the pressure,  there are additional complications due to the
stabilization of the boundary layer pressure to $C^{bl}_\omega $,
when $y_2 \to +\infty$. Consequently, %the correction
 $\displaystyle {\omega}^{ bl , \ep} - H(x_2) C^{bl}_\omega  \frac{\p   v^0_1 }{ \p    x_2 } |_\Sigma $ is small
in $\Omega_1$ and we should take into account the pressure stabilization effect.

At the flat
interface $\Sigma$, the normal component of the normal stress
reduces to the pressure field. Subtraction of the stabilization
pressure constant at infinity leads to the pressure jump on
$\Sigma$:
\begin{equation}\label{BJ46}
    [p^\ep]_\Sigma = p^0 (x_1, +0) -  {\tilde p}^0 (x_1, -0) =-  C^{bl}_\omega
  \frac{\p   v^0_1 }{ \p    x_2 } |_\Sigma + O (\ep ) \quad \mbox{ for }
\quad x_1 \in (0,L).
\end{equation}
  Therefore, the pressure approximation is
\begin{equation}\label{BJ47}
p^\ep (x) =p^0 H(x_2) + {\tilde p}^0  H(-x_2 ) - \bigl( {\omega}^{ bl , \ep} (x) - H(x_2)
C^{bl}_\omega \bigr)   \frac{\p   v^0_1 }{ \p    x_2 } |_\Sigma -
  \ep  C^{bl}_1  p^\sigma H(x_2) + O(\ep) .
\end{equation}
\vskip3pt Following the ideas from \cite{JaMi2},  these heuristic
calculations could be made rigorous. Let us define the errors in
velocity and in the pressure:
\begin{gather}
{\cal U}^\ep (x) =  \ve -  \mathbf{v}^0  +( \beta^{ bl , \ep} -\ep   C^{bl}_1 \mathbf{e}^1 H(x_2))  \frac{\p v^0_1
}{ \p   x_2 } |_\Sigma  +\ep   C^{bl}_1 \mathbf{z}^\sigma \label{BJ4.66} \\
{\cal P}^\ep (x) = p^\ep - p^0 H(x_2) - {\tilde p}^0  H(-x_2 ) +\bigl( {\omega}^{ bl , \ep} (x) - H(x_2)
C^{bl}_\omega \bigr)   \frac{\p   v^0_1 }{ \p    x_2 } |_\Sigma +
  \ep  C^{bl}_1  p^\sigma H(x_2) . \label{BJ4.67}
\end{gather}
 \begin{remark} Rigorous argument, showing that ${\cal U}^\ep$ is of order $O(\ep^2)$, allows  justifying  Saffman's
modification  of the Beavers and Joseph law (\ref{AUX}):
On the interface
$\Sigma$ we obtain
$$\frac{\p v^\ep _1}{ \p x_2 } |_\Sigma = \frac{\p v^0 _1}{ \p x_2 }
|_\Sigma - \frac{\p \beta^{bl}_1 }{ \p y_2} |_{\Sigma , y=x/\ep}  +
O(\ep) \quad \hbox{ and } \quad \frac{v^\ep _1}{ \ep } = -
\beta^{bl}_1 ( x_1 / \ep , 0) \frac{\p v^0 _1}{ \p x_2 } |_\Sigma +
O(\ep) . $$ After averaging over $\Sigma$ with respect to $y_1$, we
obtain the %familiar form of the
Saffman version of the law by
Beavers and Joseph
  \begin{equation}\label{BJ}
u^{eff}_1 = -\ep C^{bl}_1 \frac{\p u^{eff}_{1} }{ \p x_2}  \quad
\hbox{ on } \quad \Sigma ,    \end{equation}
 where $u^{eff}_1$ is the average of $v_1^{\ep}$
over the characteristic pore opening at the naturally permeable wall. The
higher order terms are neglected.\end{remark}
For simplicity we denote
$$  \sigma_{12}^0 (x_1 ) = \frac{\p v^0_1
}{ \p   x_2 } |_\Sigma .$$
Then, the variational equation for $\displaystyle ( \beta^{ bl , \ep} -\ep   C^{bl}_1 \mathbf{e}^1 H(x_2))  \frac{\p v^0_1
}{ \p   x_2 } |_\Sigma $ reads
\begin{gather}%\label{beta1}
    \int_{\Oe} \nabla \bigg( ( \beta^{ bl , \ep} -\ep   C^{bl}_1 \mathbf{e}^1 H(x_2)) \sigma_{12}^0 \bigg) :  \nabla \varphi \ dx - \int_{\Oe} \sigma_{12}^0 \bigl( {\omega}^{ bl , \ep} (x) - H(x_2)
C^{bl}_\omega \bigr) \mbox{ div } \varphi \ dx =\notag \\
 -\int_{\Sigma} \varphi_1   \sigma_{12}^0 \ dS
 - \int_{\Sigma} C^{bl}_\omega  \varphi_2   \sigma_{12}^0 \ dS
- \int_{\Oe} \sum_i \bigg(  \Delta \sigma_{12}^0 ( \beta^{ bl , \ep}_i -\ep   C^{bl}_1 \delta_{1i} H(x_2)) \varphi_i -\notag \\  \p_{x_i} \sigma_{12}^0 ( \omega^{ bl , \ep} -\ep   C^{bl}_\omega) \varphi_i  -2 ( \beta^{ bl , \ep}_i -\ep   C^{bl}_1 \delta_{1i} H(x_2)) \mbox{ div } (\varphi_i \nabla \sigma_{12}^0 ) \bigg) \ dx , \; \forall \varphi \in  { W}^\ep .\label{eqbeta}
\end{gather}
Next, the variational form of (\ref{4.37Couette})-(\ref{4.40Couette}) reads
\begin{gather}
 \int_{\Oe} \nabla  \mathbf{z}^\sigma :  \nabla \varphi \ dx - \int_{\Oe} p^\sigma \mbox{ div } \varphi \ dx =  -\int_{\Sigma} (-\varphi_2 p^\sigma +\varphi \cdot  \frac{\p \mathbf{z}^\sigma }{ \p x_2 })    \ dS , \; \forall \varphi \in  { W}^\ep .
\label{zsigma}
\end{gather}

Now we are ready to write the variational equation for $\{ {\cal U}^\ep , {\cal P}^\ep \}$ and obtain the higher order error estimates as in  \cite{JM00}. Nevertheless, contrary to \cite{JM00}, $  {\cal U}^\ep$ is not divergence free anymore and we need %Proposition \ref{divest}
more effort to control ${\cal P}^\ep$.
\begin{theorem} \label{T4.17} Let ${\cal U}^\ep$ be defined by (\ref{BJ4.66}) and
${\cal P}^\ep$ by (\ref{BJ4.67}).  Let ${\tilde p}^0$ be a smooth function satisfying the interface condition (\ref{BJ46}).
 Then, the following
estimates hold
\begin{gather}
 \ep \Vert \nabla {\cal P}^\ep  \Vert_{H^{-1} (\Oe)} +
\ep \Vert \nabla {\cal U}^\ep  \Vert_{L^2 (\Oe )^4} + \Vert  {\cal U}^\ep  \Vert_{L^2 (\Oe_2 )^2} +
\ep^{1/2} \Vert  {\cal U}^\ep  \Vert_{L^2 (\Sigma  )^2} %+\Vert  {\cal U}^\ep  \Vert_{L^2 (\O_1 )^2})
 \leq C\ep^2
 \label{4.84} % \leq C\ep \label{4.85}
 \end{gather}
\end{theorem}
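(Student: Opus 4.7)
The plan is to derive a single master variational identity for $\{{\cal U}^\ep,{\cal P}^\ep\}$, show its right-hand side is small when tested against any $\varphi\in W^\ep$, use a Bogovskii-type lift to make ${\cal U}^\ep$ effectively divergence-free for the energy estimate, and finally recover the pressure bound by duality.

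The starting point is the construction of the master equation itself. Subtracting from (\ref{1.6}) the identity (\ref{H4.57}) for $\{\mathbf{v}^0,\tilde p^0\}$ extended to $\Oe_2$, the boundary-layer identity (\ref{eqbeta}) weighted by $\sigma_{12}^0=\p v_1^0/\p x_2|_\Sigma$, and the counterflow identity (\ref{zsigma}) multiplied by $\ep C_1^{bl}$, one arrives at
\begin{equation*}
\int_{\Oe}\nabla{\cal U}^\ep:\nabla\varphi\,dx-\int_{\Oe}{\cal P}^\ep\div\varphi\,dx=\langle R^\ep,\varphi\rangle,\qquad\forall\varphi\in W^\ep.
\end{equation*}
The critical cancellations occur along $\Sigma$: the shear term $\int_\Sigma\sigma_{12}^0\varphi_1\,dS$ from (\ref{H4.57}) is matched by its counterparts in (\ref{eqbeta}) and (\ref{zsigma}), while the pressure jump $\int_\Sigma[\tilde p^0]\varphi_2\,dS$ is absorbed by $\int_\Sigma C_\omega^{bl}\sigma_{12}^0\varphi_2\,dS$ exactly because the smooth extension $\tilde p^0$ is chosen to satisfy (\ref{BJ46}).

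Next I would estimate $R^\ep$. Its terms are: (i) volume integrals over $\Oe$ of smooth derivatives of $\sigma_{12}^0$ and $\mathbf{v}^0$ against the boundary-layer remainders $\beta^{bl,\ep}-\ep(C_1^{bl},0)H(x_2)$ and $\omega^{bl,\ep}-C_\omega^{bl}H(x_2)$; (ii) integrals over $\Oe_2$ of $(\mathbf{f}-\nabla\tilde p^0)\varphi$; and (iii) subleading $O(\ep)$ counterflow contributions. By (\ref{4.60}) the velocity remainder has $L^2(\O)$-norm $O(\ep^{3/2})$, so a naive Cauchy--Schwarz yields only $O(\ep^{3/2})\|\nabla\varphi\|_{L^2}$. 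The extra $\sqrt{\ep}$ is gained by the exponential decay of the Navier boundary layer recalled in Appendix 2, which confines the remainders to a strip of thickness $O(\ep)$ about $\Sigma$; inside this strip the trace inequality yields $\|\varphi\|_{L^2(\text{strip}_\ep)}\le C\sqrt{\ep}\|\nabla\varphi\|_{L^2}$, and Lemma \ref{L1} controls $\varphi$ in the porous part. Putting these pieces together gives
\begin{equation*}
|\langle R^\ep,\varphi\rangle|\le C\ep^2\|\nabla\varphi\|_{L^2(\Oe)^4},\qquad\forall\varphi\in W^\ep.
\end{equation*}

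The main obstacle, absent in \cite{JM00}, is that ${\cal U}^\ep$ is no longer divergence-free. Using $\div\beta^{bl,\ep}=\div\mathbf{z}^\sigma=0$ a direct computation gives $\div{\cal U}^\ep=(\beta^{bl,\ep}-\ep C_1^{bl}\mathbf{e}^1H(x_2))\cdot\nabla\sigma_{12}^0$, which by (\ref{4.60}) is of order $\ep^{3/2}$ in $L^2$ and is supported in an $O(\ep)$-strip around $\Sigma$. To decouple velocity and pressure I would construct a Bogovskii-type lift $\mathbf{w}^\ep\in W^\ep$ with $\div\mathbf{w}^\ep=\div{\cal U}^\ep$ and $\|\nabla\mathbf{w}^\ep\|_{L^2}\le C\ep^{3/2}$, the constant being uniform in $\ep$ because the support of $\div{\cal U}^\ep$ is contained in a portion of $\Omega_1$ together with at most one pore-layer of $\Oe_2$ adjacent to $\Sigma$, where a standard Bogovskii construction in the fluid part is available. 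Testing the master equation with the divergence-free field ${\cal U}^\ep-\mathbf{w}^\ep\in W^\ep$ eliminates ${\cal P}^\ep$; Cauchy--Schwarz and the bound on $R^\ep$ yield $\|\nabla{\cal U}^\ep\|_{L^2(\Oe)^4}\le C\ep$, and Lemma \ref{L1} converts this into $\|{\cal U}^\ep\|_{L^2(\Oe_2)^2}\le C\ep^2$ and $\ep^{1/2}\|{\cal U}^\ep\|_{L^2(\Sigma)^2}\le C\ep^2$. Finally, for the pressure, duality in the master equation gives, for any $\varphi\in W^\ep$ with $\|\nabla\varphi\|_{L^2}\le 1$,
\begin{equation*}
\left|\int_{\Oe}{\cal P}^\ep\div\varphi\,dx\right|\le\|\nabla{\cal U}^\ep\|_{L^2}\|\nabla\varphi\|_{L^2}+|\langle R^\ep,\varphi\rangle|\le C\ep,
\end{equation*}
so that $\ep\|\nabla{\cal P}^\ep\|_{H^{-1}(\Oe)}\le C\ep^2$, completing (\ref{4.84}).
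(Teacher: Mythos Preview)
Your overall strategy---derive a master variational identity, estimate its right-hand side, correct the divergence, test with the divergence-free remainder, and recover the pressure by duality---is exactly that of the paper. The gap is in the divergence-correction step, where you invoke a ``standard Bogovskii construction in the fluid part'' with constant uniform in $\ep$. Such a lift is not available. The divergence $g=(\beta_1^{bl,\ep}-\ep C_1^{bl}H(x_2))\,\frac{d}{dx_1}\sigma_{12}^0$ is not compactly supported in one pore layer (it only decays exponentially), and even restricted to the first layer $(0,L)\times(-\ep,0)\cap\ep E_F$ the Bogovskii constant of that thin chain of pores degenerates like the aspect ratio $L/\ep$; if instead you lift in the full $\O_2$ and then restrict via Tartar's operator $R_\ep$ (properties (\ref{1.32})--(\ref{1.33})), you lose a factor $\ep^{-1}$ and obtain only $\|\nabla R_\ep\Phi\|\le C\ep^{1/2}$, which yields $\|\nabla{\cal U}^\ep\|\le C\ep^{1/2}$ rather than $C\ep$. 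The paper flags this explicitly (``The size of $\div{\cal U}^\ep$ does not allow us to obtain the appropriate estimate and we should diminish it further'') and remedies it with the dedicated boundary-layer corrector $\mathbf{Q}^{bl}$ of (\ref{Div1})--(\ref{Div3}): subtracting $\mathbf{Q}^{bl,\ep}\frac{d}{dx_1}\sigma_{12}^0$ (plus a jump-fixing Stokes corrector $\mathbf{w}^Q$) pushes the residual divergence down to $-Q_1^{bl,\ep}\frac{d^2}{dx_1^2}\sigma_{12}^0=O(\ep^{5/2})$ in $L^2$, and only at this improved order does the lift-in-$\O_2$-then-$R_\ep$ route produce a correction of size $O(\ep)$ in $H^1$, which is what the energy estimate needs.

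A secondary point: your bound $|\langle R^\ep,\varphi\rangle|\le C\ep^2\|\nabla\varphi\|_{L^2}$ is too optimistic. The strip-localization trick is fine for the boundary-layer remainders, but the bulk term $\int_{\Oe_2}(\mathbf{f}-\nabla\tilde p^0)\varphi\,dx$ is an $O(1)$ integrand on all of $\Oe_2$, and Lemma~\ref{L1} gives only $C\ep\|\nabla\varphi\|_{L^2(\Oe_2)}$ there; indeed the paper's own estimate (\ref{Est1}) reads $C\ep^{3/2}\|\nabla\varphi\|_{L^2(\Oe)}+C\ep\|\nabla\varphi\|_{L^2(\Oe_2)}$. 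This weaker bound is still enough for $\|\nabla{\cal U}^\ep\|\le C\ep$, so the overstatement is harmless by itself---but it does not compensate for the missing $\mathbf{Q}^{bl}$ correction above.
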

\begin{proof}
First we remark that for $y_2 >0$ the mean with respect to $y_1$ of ${\omega}^{ bl } (y)-
C^{bl}_\omega$ is zero. Consequently, the problem
\begin{equation}\label{pressaux}
    \frac{\p \pi^{bl}_\omega }{\p y_1} = {\omega}^{ bl } (y)-
C^{bl}_\omega \quad \forall y_1 \in (0,1); \quad \pi^{bl}_\omega \; \mbox{ is 1-periodic }; \quad \int^1_0 \pi^{bl}_\omega (y_1 , y_2 ) \ dy_1 =0,
\end{equation}
has a unique smooth solution.

Next by subtracting (\ref{eqbeta}) and (\ref{zsigma}) from (\ref{H4.57}) we obtain
\begin{gather}
    \int_{\Oe} \nabla {\cal U}^\ep  :  \nabla \varphi \ dx - \int_{\Oe} {\cal P}^\ep \mbox{ div } \varphi \ dx =\notag \\
\ep  \int_{\Sigma} (-\varphi_2 p^\sigma +\varphi \cdot  \frac{\p \mathbf{z}^\sigma }{ \p x_2 })    \ dS +\int_{\Oe_2}(\mathbf{f} - \nabla {\tilde p}^0 ) \varphi \ dx
- \int_{\Oe} \sum_i \bigg(  \Delta \sigma_{12}^0 ( \beta^{ bl , \ep}_i -\ep   C^{bl}_1 \delta_{1i} H(x_2)) \varphi_i \ dx  -\notag \\  \int_{\Oe_2}  \p_{x_1} \sigma_{12}^0  \omega^{ bl , \ep}  \varphi_1 \ dx  -\int_{\O_1} \ep  \pi^{bl}_\omega (\frac{x}{\ep} ) (\varphi_1 \p_{x_1}^2  \sigma_{12}^0 +\p_{x_1}\varphi_1 \p_{x_1}  \sigma_{12}^0  ) \ dx  +\notag \\  2 \int_{\Oe} ( \beta^{ bl , \ep}_1 -\ep   C^{bl}_1  H(x_2)) (\varphi_1 \p_{x_1}^2  \sigma_{12}^0 +\p_{x_1}\varphi_1 \p_{x_1}  \sigma_{12}^0  ) \ dx , \; \forall \varphi \in  { W}^\ep ,\label{UPsystem} \\
\mbox{ div }  {\cal U}^\ep = ( \beta^{ bl , \ep}_1  -\ep   C^{bl}_1  H(x_2)) \frac{d}{d x_1}\sigma_{12}^0 \quad \mbox{ in } \; \Oe . \label{div1}
\end{gather}
From (\ref{UPsystem}) we find out that
\begin{gather}
| \int_{\Oe} \nabla {\cal U}^\ep  :  \nabla \varphi \ dx - \int_{\Oe} {\cal P}^\ep \mbox{ div } \varphi \ dx | \leq C\ep^{3/2} || \nabla \varphi ||_{L^2 (\Oe )^4} + C\ep || \mathbf{f} - \nabla {\tilde p}^0 ||_{L^2 (\O_2 )^2} || \nabla \varphi ||_{L^2 (\Oe_2 )^4}
\label{Est1} \\
    \mbox{ and } || \mbox{ div }  {\cal U}^\ep ||_{L^2 (\Oe )^2} \leq C\ep^{3/2} .\label{divest1}
\end{gather}
The size of $\mbox{ div }  {\cal U}^\ep$  does not allow us to obtain the appropriate estimate and we should  diminish it further.

Let $\mathbf{Q}^{ bl}$ be given by (\ref{Div1})-(\ref{Div3}). Furthermore let $\mathbf{Q}^{ bl , \ep} (x) = \ep^2 \mathbf{Q}^{ bl} (x/ \ep )$ and let $\mathbf{w}^{Q}$ be defined by
\begin{equation}\label{Eqwkj}
    \left\{
      \begin{array}{ll}
      \displaystyle
        \Delta \mathbf{w}^{Q} - \nabla p^{Q}  = 0   \;  \hbox{ in }
         \hskip1pt  \O_1  ;&\\ \noalign{\vskip+4mm}
       \hbox{div\  } \mathbf{w}^{Q} =\displaystyle \frac{1}{| \O_1 |} \int_\Sigma \frac{d}{d x_1}\sigma_{12}^0 \ dS =0 \quad \hbox{ in } \hskip3pt \O_1
       ;&\\ \\
        \mathbf{w}^{Q} =-\displaystyle \frac{d}{d x_1}\sigma_{12}^0 \mathbf{e}^2  \,  \hbox{ on } \hskip3pt \Sigma  , \quad \mathbf{w}^{Q}=0 \,  \hbox{ on } \hskip3pt \{ x_2 =h\} ; &\\ \\
       \{ \mathbf{w}^{Q}  , p^{Q} \}   \;  \hbox{is L-periodic in } \; x_1 . &
      \end{array}
    \right.
\end{equation}
We introduce the following error functions, where the compressibility effects are reduced to the next order:
\begin{gather}
 {\cal U}^\ep (x) =  {\cal U}^\ep_0 (x)  + \mathbf{Q}^{ bl , \ep} (x) \frac{d}{d x_1}\sigma_{12}^0 +%\notag \\
\ep^2 H(x_2)  ( \int_{Z_{BL}} ( C^{bl}_1 H(y_2) - \beta^{bl}_1 (y)) \ dy  )
\mathbf{w}^{Q}  ,  \label{Eq77}  \\
 {\cal P}^\ep (x) ={\cal P}^\ep_0 (x,t) +\ep^2 H(x_2)   (\int_{Z_{BL}} ( C^{bl}_1 H(y_2) - \beta^{bl}_1 (y)) \ dy  ) %\frac{1}{| S|} (\int_{Y-\mathbf{e}^3} \beta^j_k (y) \ dy)
p^{Q}    , \label{Eq78} \\
\mbox{ div }  {\cal U}^\ep_0  = - {Q}^{ bl , \ep}_1 (x) \ \frac{d^2}{d x_1^2} \sigma_{12}^0 \quad \mbox{ in } \; \Oe . \label{divEq1}
\end{gather}
Then ${\cal U}^\ep_0  \in    { W}^\ep$ and $ || $ div ${\cal U}^\ep_0 ||_{L^2 (\Oe )^4} \leq C\ep^{5/2}$.
Next, we construct a function $\Phi^{1, \ep} \in H^1 (\O_1 )^2$ such that
 \begin{equation}\label{EqPhi1}
    \left\{
      \begin{array}{ll}
      %\displaystyle
       % \Delta \mathbf{w}^{Q} - \nabla p^{Q}  = 0   \;  \hbox{ in }
        % \hskip1pt  \O_1  ;&\\ \noalign{\vskip+4mm}
       \hbox{div\  }  \Phi^{1, \ep} =\displaystyle - {Q}^{ bl , \ep}_1 (x) \ \frac{d^2}{d x_1^2} \sigma_{12}^0 \quad \hbox{ in } \hskip3pt \O_1 ,
       ;&\\ \\
         \Phi^{1, \ep}  =\displaystyle \frac{\mathbf{e}^2 }{| \Sigma |} \int_{\O_1}  {Q}^{ bl , \ep}_1 (x) \frac{d^2}{d x_1^2}\sigma_{12}^0 \ dx \,  \hbox{ on } \hskip3pt \Sigma  , \quad \Phi^{1, \ep} =0 \,  \hbox{ on } \hskip3pt \{ x_2 =h\} , &\\ \\
       \Phi^{1, \ep}   \;  \hbox{is L-periodic in } \; x_1 . &
      \end{array}
    \right.
\end{equation}
We note that $\displaystyle || \Phi^{1, \ep} ||_{H^1 (\O_1 )^2} \leq C\ep^2$. Next we extend ${Q}^{ bl , \ep}$ by zero to the rigid part of the porous medium and choose a function $\Phi^{2, \ep} \in H^1 (\O_2 )^2$ such that
 \begin{equation}\label{EqPhi11}
    \left\{
      \begin{array}{ll}
      %\displaystyle
       % \Delta \mathbf{w}^{Q} - \nabla p^{Q}  = 0   \;  \hbox{ in }
        % \hskip1pt  \O_1  ;&\\ \noalign{\vskip+4mm}
       \hbox{div\  }  \Phi^{2, \ep} =\displaystyle - {Q}^{ bl , \ep}_1 (x) \ \frac{d^2}{d x_1^2} \sigma_{12}^0 \quad \hbox{ in } \hskip3pt \O_2 ,
       &\\ \\
         \Phi^{2, \ep}  =-\displaystyle \frac{\mathbf{e}^2 }{| \Sigma |} \int_{\O_2}  {Q}^{ bl , \ep}_1 (x) \frac{d^2}{d x_1^2}\sigma_{12}^0 \ dx \,  \hbox{ on } \hskip3pt \Sigma  , \quad \Phi^{2, \ep} =0 \,  \hbox{ on } \hskip3pt \{ x_2 =-H\} , &\\ \\
       \Phi^{2, \ep}   \;  \hbox{is L-periodic in } \; x_1 . &
      \end{array}
    \right.
\end{equation}
We note that $\displaystyle \Phi^{1, \ep}  = \Phi^{2, \ep} $ on $\Sigma$ and $\displaystyle || \Phi^{2, \ep} ||_{H^1 (\O_1 )^2} \leq C\ep^2$. Let $
X_2= \{ \mathbf{z} \in H^1 (\O_2 )^2 , \;
%  \bigg(  \{ x_1 = 0 \} \cup
\mathbf{z}=0 \; \hbox{ on } \; \{ x_1 =L \} $ and $
\; \mathbf{z} \; \hbox{ is } \; L-\hbox{periodic in } x_1\} \quad \mbox{and} \quad X_2^\ep =\{ \mathbf{z} \in X_2 , \;  \mathbf{z}=0 \; \hbox{ on } \; \p \Oe_2 \setminus \p \O_2 \}.$ In the seminal paper  \cite{Ta1980} Tartar constructed a continuous linear restriction operator   operator
$R_{\ep} \in {\cal L} \big( X_2 ,\, X_2^\ep \big)$, such that
  \begin{gather}
\hbox{ div } ( R_{\ep} \varphi ) = \hbox{ div }  \varphi  + \sum_{k\in T_\ep}
{1\over \vert Y^\ep_{F_k} \vert } {\displaystyle
\chi}_{Y^\ep_{F_k} } \int_{Y^\ep_{S_k} } \hbox{ div } \varphi \,
dx , \qquad \forall \varphi \in X_2 \label{1.30} \\
 %\hbox{ div } \varphi =0\;   \Rightarrow \; \hbox{ div } (R_\ep \varphi )=0,
%\quad \forall \varphi \in W  \label{1.31} \\
    \Vert R_\ep \varphi
\Vert_{L^2 (\Oe_2)^2}\leq C\big\{\ep \Vert \nabla \varphi \Vert_{L^2 (\O_2)^4
}+  \Vert  \varphi  \Vert_{L^2 (\O_2)^2}\big\}  ,
\quad \forall \varphi \in X_2 \label{1.32} \\
    \Vert \nabla ( R_\ep
\varphi ) \Vert_{L^2 (\Oe)^{4}}\leq %\notag \\
    {C\over \ep }
\big\{\ep \Vert \nabla \varphi \Vert_{L^2 (\O_2)^4 }+  \Vert  \varphi  \Vert_{L^2 (\O_2)^2}\big\} , \quad \forall \varphi \in
X_2 .\label{1.33} \end{gather}
Furthermore, $\varphi = R_\ep
\varphi $ on $\Sigma$. For more details we refer also to \cite{All97} and \cite{MIK00}.
This construction allows us to work with the divergence free velocity error function $\mathcal{\overline U}^\ep$ given by
\begin{equation}\label{Dercor}
\mathcal{\overline U}^\ep = {\cal U}^\ep_0 - H(x_2) \Phi^{1, \ep} - H(-x_2 ) R_\ep \Phi^{2, \ep}
\end{equation}

Now we write the analogue of   the variational equation (\ref{UPsystem}) for $\{ \mathcal{\overline U}^\ep , {\cal P}^\ep_0 \}$  and, since $\displaystyle || \nabla R_\ep \Phi^{2, \ep} ||_{L^2 (\O_2 )^4} $ $\leq C\ep  $. We
find out that the leading order force term is of the same order as in the estimate (\ref{Est1}).
%to conclude that
%\begin{equation}\label{Press}
 %||   {\cal P}^\ep_0 ||_{L^2 (\Oe )} \leq \frac{C}{\ep } \bigg( || \nabla {\cal U}^\ep_0 ||_{L^2 (\Oe )^4} + \ep  || %\nabla \varphi ||_{L^2 (\Oe )^4} \bigg) .
%\end{equation}
Now we test the analogue of   variational equation (\ref{UPsystem}) for $\{ \mathcal{\overline U}^\ep , {\cal P}^\ep_0 \}$ with $\varphi =\mathcal{\overline U}^\ep  $%, use the estimate on its divergence and the pressure estimate %(\ref{Press}),
 to obtain
\begin{equation}\label{Uest}
   || \nabla \mathcal{\overline U}^\ep ||_{L^2 (\Oe )^4}   \leq C\ep .
\end{equation}
We remark that $\mathcal{\overline U}^\ep$ differs from $\mathcal{ U}^\ep$ for $O(\ep^2 )$ in $L^2$-norm and for $O(\ep )$ in $H^1$-norm . Therefore (\ref{Uest}) gives us the middle part of the estimate
(\ref{4.84}). In  what concerns the $L^2 (\Sigma)$ norm of $\mathcal{ U}^\ep$, it follows by using (\ref{Poinc1}).
Remaining pressure estimate follows  easily from the weak formulation and the estimates on $\mathcal{ U}^\ep$.
\end{proof}
Next we use Theorem \ref{T4.17} and the results on the Stokes system with $L^2 -$ boundary values from \cite{FKV} and \cite{BZ} to conclude the following result:
\begin{corollary} \label{T4.17c} Let ${\cal U}^\ep $ be defined by (\ref{BJ4.66}) and
${\cal P}^\ep$ by (\ref{BJ4.67}).  Let ${\tilde p}^0$ be a smooth function satisfying the interface condition (\ref{BJ46}).
 Then,  the following
estimate holds
\begin{gather}
\sqrt{\ep} \Vert  {\cal P}^\ep  \Vert_{L^2 (\O_1
)} +% \ep \Vert \nabla {\cal U}^\ep  \Vert_{L^2 (\Oe )^4} + \Vert  {\cal U}^\ep  \Vert_{L^2 (\Oe_2 )^2} +
%\ep^{1/2}( \Vert  {\cal U}^\ep  \Vert_{L^2 (\Sigma  )^2} +
\Vert  {\cal U}^\ep  \Vert_{H^{1/2} (\O_1 )^2} \leq C\ep^{3/2} .
 \label{4.84C} % \leq C\ep \label{4.85}
 \end{gather}
\end{corollary}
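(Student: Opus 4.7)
The strategy is to combine the sharp $L^{2}$-trace estimate $\|\mathcal{U}^{\ep}\|_{L^{2}(\Sigma)^{2}}\leq C\ep^{3/2}$ already contained in Theorem~\ref{T4.17} with the regularity theory of the Stokes system having $L^{2}$-Dirichlet data on Lipschitz domains (Fabes--Kenig--Verchota \cite{FKV} and Brown--Shen \cite{BZ}), applied on the unconfined strip $\O_{1}$. Restricting (\ref{UPsystem})--(\ref{divEq1}) to $\O_{1}$, the pair $\{\mathcal{U}^{\ep},\mathcal{P}^{\ep}\}$ solves, in the weak sense,
\begin{equation*}
-\Delta\mathcal{U}^{\ep}+\nabla\mathcal{P}^{\ep}=F^{\ep},\quad\hbox{div }\mathcal{U}^{\ep}=G^{\ep}\quad\hbox{in }\O_{1},
\end{equation*}
with $\mathcal{U}^{\ep}=0$ on $\{x_{2}=h\}$, $L$-periodic in $x_{1}$, and trace $\mathcal{U}^{\ep}|_{\Sigma}=g^{\ep}$ satisfying $\|g^{\ep}\|_{L^{2}(\Sigma)^{2}}\leq C\ep^{3/2}$ by (\ref{4.84}). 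Direct inspection of (\ref{UPsystem})--(\ref{divEq1}), together with the compressibility correction $\Phi^{1,\ep}$ introduced in the proof of Theorem~\ref{T4.17}, yields the auxiliary bounds $\|F^{\ep}\|_{H^{-1}(\O_{1})^{2}}+\|G^{\ep}\|_{L^{2}(\O_{1})}\leq C\ep^{3/2}$.

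\textbf{Velocity estimate.} Decompose $\mathcal{U}^{\ep}=u^{\ep}_{b}+u^{\ep}_{r}$ on $\O_{1}$, where $u^{\ep}_{b}$ solves the homogeneous Stokes problem with boundary values $g^{\ep}$ on $\Sigma$, zero on $\{x_{2}=h\}$, and periodic laterally, and $u^{\ep}_{r}$ carries the forcing and the residual divergence with homogeneous Dirichlet data. The $L^{2}$-Dirichlet Stokes theory of \cite{FKV,BZ} applied to $u^{\ep}_{b}$ yields
$$\|u^{\ep}_{b}\|_{H^{1/2}(\O_{1})^{2}}\leq C\|g^{\ep}\|_{L^{2}(\Sigma)^{2}}\leq C\ep^{3/2},$$
while the classical variational theory for Stokes with zero boundary data gives $\|u^{\ep}_{r}\|_{H^{1}(\O_{1})^{2}}\leq C\ep^{3/2}$, which embeds continuously into $H^{1/2}(\O_{1})^{2}$. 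Summing provides the velocity part of (\ref{4.84C}).

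\textbf{Pressure estimate and main obstacle.} Since every $\varphi\in H^{1}_{0}(\O_{1})^{2}$ extends by zero to an admissible element of $W^{\ep}$, the bound $\|\nabla\mathcal{P}^{\ep}\|_{H^{-1}(\O^{\ep})}\leq C\ep$ read off from (\ref{4.84}) transfers to $\O_{1}$. Nečas' inequality on the Lipschitz domain $\O_{1}$, combined with the mean-zero normalization of $\mathcal{P}^{\ep}$ inherited from $\int_{\O_{1}}p^{\ep}=0$ and the explicit choice of $\tilde p^{0}$ in (\ref{BJ46}), then yields $\|\mathcal{P}^{\ep}\|_{L^{2}(\O_{1})}\leq C\ep$, equivalently $\sqrt{\ep}\|\mathcal{P}^{\ep}\|_{L^{2}(\O_{1})}\leq C\ep^{3/2}$. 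The principal technical obstacle lies in the velocity step: adapting the $L^{2}$-Dirichlet Stokes theory of \cite{FKV,BZ}, originally developed for bounded Lipschitz domains, to the periodic strip $\O_{1}$ requires constructing a divergence-compatible $H^{1/2}$-lifting of $g^{\ep}$ that respects the periodicity, and verifying that the nontangential-maximal-function machinery transfers with uniform constants in the presence of the nonzero divergence $G^{\ep}$ -- precisely the nontrivial input borrowed from the cited works.
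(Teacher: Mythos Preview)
Your proposal is correct and follows essentially the same route as the paper. The paper's own argument for this corollary is a single sentence---``use Theorem~\ref{T4.17} and the results on the Stokes system with $L^2$-boundary values from \cite{FKV} and \cite{BZ}''---and your decomposition $\mathcal{U}^\ep=u^\ep_b+u^\ep_r$ together with the Ne\v{c}as estimate for the pressure is exactly how one unpacks that sentence, matching the very-weak-solution framework recorded in Propositions~\ref{D2}--\ref{D3} of Appendix~1.
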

Now we introduce the effective flow equations in $\O_1$ through the
 boundary value problem (\ref{4.91})-(\ref{4.95}), containing the slip condition of Beavers and Joseph.
  Since our expansion is performed using the solution $\{ \mathbf{v}^0 , p^0 \}$ of the problem
(\ref{4.37})-(\ref{4.40}), we need to know the relationship between the solutions to these two boundary value problems.

\begin{proposition}\label{proxio} Let $\mathbf{f} \in C^\infty ({\overline \O}_1)^2$ and $L$-periodic in $x_1$. Let $\{ \mathbf{u}^{eff} , p^{eff} \} $ be the solution of the problem (\ref{4.91})-(\ref{4.95}),  $\{ \mathbf{v}^0 , p^0 \}$ of the problem (\ref{4.37})-(\ref{4.40}) and $\{ \mathbf{z}^{\sigma} , p^{\sigma} \} $  of the problem (\ref{4.37Couette})-(\ref{4.40Couette}). Then we have
\begin{gather}|| \mathbf{u}^{eff} -\mathbf{v}^0 ||_{H^k (\O_1 )^2} + || p^{eff} -p^0 ||_{H^{k-1} (\O_1 )}\leq C\ep , \quad \forall k\in \mathbb{N} ; \label{Prox1} \\
|| \mathbf{u}^{eff} -\mathbf{v}^0 + \ep C^{bl}_1 \mathbf{z}^{\sigma} ||_{H^k (\O_1 )^2} + || p^{eff} -p^0 +  \ep C^{bl}_1 p^{\sigma} ||_{H^{k-1} (\O_1 )}\leq C\ep^2 , \quad \forall k\in \mathbb{N} . \label{Prox2}
\end{gather}
\end{proposition}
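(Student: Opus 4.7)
The plan is to establish \eqref{Prox2} first; \eqref{Prox1} then follows by the triangle inequality, since $\{ \mathbf{z}^\sigma , p^\sigma \}$ does not depend on $\ep$, so that $\| \ep C^{bl}_1 \mathbf{z}^\sigma \|_{H^k (\O_1 )^2} + \| \ep C^{bl}_1 p^\sigma \|_{H^{k-1}(\O_1 )} \leq C \ep$ for every $k$.

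For \eqref{Prox2} I set $\boldsymbol{\rho} = \mathbf{u}^{eff} - \mathbf{v}^0 + \ep C^{bl}_1 \mathbf{z}^\sigma$ and $\varpi = p^{eff} - p^0 + \ep C^{bl}_1 p^\sigma$. Subtracting the defining equations of the three Stokes problems, $(\boldsymbol{\rho} , \varpi )$ solves the \emph{homogeneous} Stokes system on $\O_1$ with $\boldsymbol{\rho} = 0$ on $\{ x_2 = h\}$, $L$-periodic in $x_1$, and $\rho_2 = 0$ on $\Sigma$. Inserting the Beavers--Joseph slip law $u^{eff}_1 + \ep C^{bl}_1 \p u^{eff}_1 /\p x_2 = 0$ together with $v^0_1|_\Sigma = 0$ and $z^\sigma_1|_\Sigma = \p v^0_1 /\p x_2 $ gives the Robin-type interface condition
\[
\rho_1 + \ep C^{bl}_1 \frac{\p \rho_1}{\p x_2} = \ep^2 (C^{bl}_1 )^2 \frac{\p z^\sigma_1}{\p x_2} \quad \hbox{on} \; \Sigma .
\]
The key feature is that the right-hand side is $\ep^2$ times a smooth function independent of $\ep$, because $\mathbf{z}^\sigma$ was precisely designed to cancel the leading $\ep$-order shear mismatch at the interface.

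Testing the weak formulation with $\boldsymbol{\rho}$, the $\Sigma$-contribution reduces (since $\rho_2|_\Sigma = 0$) to $-\int_\Sigma (\p \rho_1 /\p x_2) \rho_1 \, dS$; substituting $\p \rho_1 /\p x_2 = \ep C^{bl}_1 \p z^\sigma_1 /\p x_2 - \rho_1 / (\ep C^{bl}_1 )$ from the Robin condition and using $C^{bl}_1 < 0$ for coercivity, together with Young's inequality, one obtains
\[
\| \nabla \boldsymbol{\rho} \|_{L^2 (\O_1 )}^2 + \frac{1}{2 \ep |C^{bl}_1 |}\| \rho_1 \|_{L^2 (\Sigma )}^2 \leq C \ep^3 ,
\]
whence $\| \boldsymbol{\rho} \|_{H^1 (\O_1 )} \leq C \ep^{3/2}$ and $\| \rho_1 \|_{L^2 (\Sigma )} \leq C \ep^2$. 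To upgrade to $H^k$ for arbitrary $k$, I bootstrap tangentially: by $L$-periodicity and translation invariance in $x_1$, every derivative $\p_{x_1}^j \boldsymbol{\rho}$ satisfies the \emph{same} Stokes--Robin problem with right-hand side $\ep^2 (C^{bl}_1 )^2 \p_{x_1}^j \p z^\sigma_1 /\p x_2 $, still smooth and $O(\ep^2 )$. Applying the energy bound to each $\p_{x_1}^j \boldsymbol{\rho}$ yields $\| \p_{x_1}^j \rho_1 \|_{L^2 ( \Sigma )} \leq C_j \ep^2$, hence $\| \rho_1 \|_{H^j ( \Sigma )} \leq C_j \ep^2$ for every $j$. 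Combined with $\rho_2 |_\Sigma = 0$ exactly, the system for $\boldsymbol{\rho}$ can now be regarded as a Dirichlet Stokes problem on the fixed smooth domain $\O_1$ with boundary data of norm $O(\ep^2 )$ in any $H^{k-1/2}(\p \O_1 )$, and classical Stokes regularity gives $\| \boldsymbol{\rho} \|_{H^k (\O_1 )} \leq C_k \ep^2$. The pressure estimate $\| \varpi \|_{H^{k-1}(\O_1 )} \leq C_k \ep^2$ then follows from $\nabla \varpi = \Delta \boldsymbol{\rho}$, the zero-mean normalization inherited from $p^{eff}, p^0 \in L^2_0 (\O_1 )$, and smoothness of $p^\sigma$.

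The only delicate step is the correct derivation of the Robin condition for $\boldsymbol{\rho}$: it relies precisely on the definition of $\mathbf{z}^\sigma$ (Dirichlet data $\p v^0_1 /\p x_2 |_\Sigma \mathbf{e}^1$ on $\Sigma$) to cancel the $O(\ep)$ mismatch of the slip law so that the remaining source is genuinely $O(\ep^2)$. Once that is set up, coercivity from $C^{bl}_1 < 0$, periodic tangential differentiation, and standard Stokes regularity on a fixed smooth domain combine routinely to conclude.
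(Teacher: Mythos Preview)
Your proof is correct and follows essentially the same strategy as the paper: derive the Robin condition on $\Sigma$ for the corrected difference, use the sign $C^{bl}_1<0$ to get a coercive energy estimate yielding the $O(\ep^2)$ trace bound, differentiate tangentially in $x_1$ to upgrade the trace to arbitrary $H^j(\Sigma)$, and then invoke Stokes regularity on the fixed domain $\O_1$. The only organizational difference is that the paper first proves \eqref{Prox1} for $\mathbf{U}=\mathbf{u}^{eff}-\mathbf{v}^0$ and then repeats the argument for $\mathbf{U}^1=\mathbf{u}^{eff}-\mathbf{v}^0+\ep C^{bl}_1\mathbf{z}^\sigma$ to get \eqref{Prox2}, whereas you go directly to \eqref{Prox2} and recover \eqref{Prox1} by the triangle inequality; your ordering is slightly more economical.
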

\begin{proof} The elliptic regularity for the Stokes operator (see e.g. \cite{Tem}) gives $C^\infty$ regularity for the functions $ \{ \mathbf{u}^{eff} , p^{eff} \} $,  $\{ \mathbf{v}^0 , p^0 \}$ and $\{ \mathbf{z}^{\sigma} , p^{\sigma} \} $. It is easy to see that $ \{ \mathbf{u}^{eff} , p^{eff} \} $ is bounded in $H^k (\O_1)^4$, independently of $\ep$, for every integer $k$.\vskip0pt Let $\mathbf{U}=  \mathbf{u}^{eff} - \mathbf{v}^0$ and $P= p^{eff} -p^0$. Then for every $\varphi \in {\cal V} = \{ \varphi \in H^1 (\O_1 )^2 | \; \varphi $  is $L$-periodic in $x_1 , \; \varphi =0 $ on $\{ x_2 =h \} , \; \varphi_2 =0$ on $\Sigma \}$ we obtain
\begin{gather}\int_{\O_1} \nabla \mathbf{U} : \nabla \varphi \ dx - \int_{\O_1} P \mbox{ div } \varphi \ dx - \frac{1}{\ep C^{bl}_1} \int_{\Sigma} U_1 \varphi_1 \ dS = - \int_{\Sigma} \sigma^0_{12} \varphi_1 \ dS .\label{Eqprox1}
\end{gather}
Using $\varphi = \mathbf{U}$ as a test function yields
\begin{equation}\label{Estprox1}
    \left\{
      \begin{array}{ll}
        \displaystyle || \mathbf{U}||_{H^1 (\O_1 )^2} + \frac{1}{\sqrt{\ep}} || U_1  ||_{L^2 (\Sigma )}\leq C\sqrt{\ep} , &  \\
      \displaystyle || P||_{L^2 (\O_1 )} \leq C\sqrt{\ep}   . &
      \end{array}
    \right.
\end{equation}
Differentiating the equations with respect to $x_1$ leads to the estimate
\begin{equation}\label{Estprox2}
    \left\{
      \begin{array}{ll}
        \displaystyle || \frac{\p \mathbf{U}}{\p x_1}||_{H^1 (\O_1 )^2} + \frac{1}{\sqrt{\ep}} || \frac{\p  U_1  }{\p x_1} ||_{L^2 (\Sigma )}\leq C\sqrt{\ep} , &  \\
      \displaystyle || \frac{\p P}{\p x_1}||_{L^2 (\O_1 )} \leq C\sqrt{\ep}   . &
      \end{array}
    \right.
\end{equation}
Since $\displaystyle \frac{\p  U_2  }{\p x_1} =0$ on $\Sigma$, we have for the velocity trace $\mathbf{U} \in H^1 (\Sigma )^2$ and its norm is smaller than $C\ep$. Using \cite{FKV} and \cite{BZ} we obtain that
\begin{equation}\label{Estprox3}
    || \mathbf{U}||_{H^{3/2} (\O_1 )^2} +  ||  P ||_{H^{1/2} (\O_1 )} \leq C\ep .
\end{equation}
After bootstrapping, we conclude that the estimate (\ref{Prox1}) holds true. \vskip2pt
Using corrections $\mathbf{U}^1=  \mathbf{u}^{eff} - \mathbf{v}^0 + \ep C^{bl}_1 \mathbf{z}^{\sigma}$ and $P^1= p^{eff} -p^0 + \ep C^{bl}_1 p^{\sigma}$, for every $\varphi \in {\cal V} = \{ \varphi \in H^1 (\O_1 )^2 | \; \varphi $  is $L$-periodic in $x_1 , \; \varphi =0 $ on $\{ x_2 =h \} , \; \varphi_2 =0$ on $\Sigma \}$ we obtain
\begin{gather}\int_{\O_1} \nabla \mathbf{U}^1 : \nabla \varphi \ dx - \int_{\O_1} P^1 \mbox{ div } \varphi \ dx - \frac{1}{\ep C^{bl}_1} \int_{\Sigma} U_1^1 \varphi_1 \ dS = \ep \int_{\Sigma} g \varphi_1 \ dS ,\label{Eqprox2}
\end{gather}
where $g=\displaystyle -C_1^{bl} \frac{\p z^\sigma_1}{\p x_2} |_\Sigma \in C^\infty ({\overline \Sigma })$ is uniformly bounded with respect to $\ep$. Repeating the argument used in the first part of the proof to  $\{ \mathbf{U}^1 , P^1 \}$ yields the estimate (\ref{Prox2}).
\end{proof}

\begin{proof} ({\bf of  Theorem \ref{P4.19} }) We remark that on $\Sigma$
\begin{gather}
    \ve - \mathbf{u}^{eff} = {\cal U}^\ep - (\beta^{ bl , \ep} - \ep ( C^{bl}_1 ,0 ))  \frac{\p   v^0_1 }{ \p x_2 } (x_1 ,0).  \label{Diff}
\end{gather}
Now Theorem \ref{T4.17}, Corollary \ref{T4.17c} and  Propositions \ref{D2} and \ref{D3} from the Appendix 1 imply the desired result.
\end{proof}
\subsection{Justification of the interface pressure jump law and the effective equations in the porous medium} \label{justpress}

 We
have already seen that, after extension by zero to the rigid part,
the velocity ${\cal U}^\ep$ satisfies the {\it a priori} estimates
(\ref{4.84}), (\ref{4.84C}), with $\Oe$ replaced by $\O$. Furthermore,
it would be more comfortable to work with the pressure field
${\cal P}^\ep$ defined on $\O$. Following the approach from \cite{LA}, we
define the pressure extension ${\tilde {\cal P}}^\ep$ by
\begin{equation}\label{1.27P}
 {\tilde {\cal P}}^\ep =%\left\{
\left\{%
\begin{array}{ll}
    { {\cal P}}^\ep &\hbox{ in } \Omega^\ep  \\
    \frac{1}{ \vert
Y^\ep_{F_i}\vert } \, \int_{Y^\ep_{F_i}}\, { {\cal P}}^\ep & \hbox{ in the }
\, Y^\ep_{S_i}\, \hbox{ for each } i  ,  \\
\end{array}%
\right.
\end{equation}
 where $Y^\ep_{F_i}$ is the fluid part of the cell
$Y^\ep_i$. Note that the solid part of the porous medium is a union of
all $Y^\ep_{S_i}$.  Then, following Tartar's results from  \cite{Ta1980}  we have
$$ < \nabla {\tilde {\cal P}}^\ep , \varphi >_\O = < \nabla  {\cal P}^\ep , {\tilde R}_\ep \varphi >_{\O^\ep } , \quad \forall \varphi \in H^1 (\O)^2 , $$
where
\begin{equation}\label{ExtR}
    {\tilde R}_\ep \varphi (x) = \left\{
                                   \begin{array}{ll}
                                     \varphi (x) , & \hbox{for }  \; x\in \O_1 \cup \Sigma ;\\
                                     R_\ep  \varphi (x) , & \hbox{for } \; x\in \Oe_2 .
                                   \end{array}
                                 \right.
\end{equation}
Using the estimate (\ref{4.84}) and properties (\ref{1.30})-(\ref{1.33}) of the restriction operator $R_\ep$, we arrive at
\par
\begin{corollary} \label{C1.6} (a priori estimate for the pressure field in
$\O_2$). Let ${\tilde {\cal P}}^\ep $ be defined by (\ref{1.27P}). Then it
satisfies the estimates
  \begin{gather}  \Vert \nabla {\tilde
{\cal P}}^\ep\Vert_{ W'} \leq C \quad \mbox{ and } \quad \Vert  {\tilde {\cal P}}^\ep
\Vert_{L^2 (\O_2)} \leq C,
    %\frac{1}{ \vert Y_F \vert } \Vert { {\cal P}}^\ep \Vert_{L^2(\Oe_2 )} \leq
%C \label{1.28P} \\
    \label{1.29P} \end{gather} where $W=\{
\mathbf{z} \in H^1 (\O_2 )^2 :  \quad \mathbf{z} =0 \; \mbox{ on } \; \{ x_2 =-H\} \cup \{ x_2 =0\} , \; \mathbf{z} \; \mbox{ is } L-\mbox{periodic} \} $.
\end{corollary}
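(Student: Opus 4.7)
The strategy is to exploit Tartar's duality identity together with the variational equation (\ref{UPsystem}) from Theorem \ref{T4.17}. For any $\varphi\in W$, since $\varphi$ vanishes on $\Sigma$, I would extend it by zero to $\O_1$ to produce an element of $H^1(\O)^2$, and then feed $\tilde R_\ep\varphi$ (from (\ref{ExtR})) into (\ref{UPsystem}) as a test function in $W^\ep$. By the definition (\ref{1.27P}) of $\tilde{\cal P}^\ep$ combined with Tartar's formula (\ref{1.30}), a short computation (averaging cell by cell) gives the key identity
\begin{equation*}
\int_{\Oe_2} {\cal P}^\ep \,\mbox{div}(R_\ep\varphi)\,dx
= \int_{\O_2} \tilde{\cal P}^\ep \,\mbox{div}\,\varphi\,dx
= -\langle \nabla \tilde{\cal P}^\ep,\varphi\rangle_{W',W}.
\end{equation*}
Hence bounding $\nabla\tilde{\cal P}^\ep$ in $W'$ reduces to controlling the right-hand side of (\ref{UPsystem}) tested against $\tilde R_\ep\varphi$.

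Because $\tilde R_\ep\varphi$ is supported in $\overline{\Oe_2}$ the integrals on $\Sigma$ drop out, leaving the diffusion term $\int_{\Oe_2}\nabla{\cal U}^\ep:\nabla R_\ep\varphi\,dx$, the volumetric forcing involving $\mathbf{f}-\nabla\tilde p^0$, and the boundary-layer corrections. Using Theorem \ref{T4.17}, which gives $\Vert\nabla{\cal U}^\ep\Vert_{L^2(\Oe)^4}\leq C\ep$, together with Tartar's restriction bounds (\ref{1.32})--(\ref{1.33})
\begin{equation*}
\Vert R_\ep\varphi\Vert_{L^2(\Oe_2)^2}\leq C\Vert\varphi\Vert_{H^1(\O_2)},
\qquad
\Vert\nabla R_\ep\varphi\Vert_{L^2(\Oe_2)^4}\leq \frac{C}{\ep}\Vert\varphi\Vert_{H^1(\O_2)},
\end{equation*}
the diffusion term is bounded by $C\Vert\varphi\Vert_{H^1(\O_2)}$. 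The forcing term is $O(1)\cdot\Vert R_\ep\varphi\Vert_{L^2}$, and every boundary-layer source in (\ref{UPsystem}) (products of $\beta^{bl,\ep}$, $\omega^{bl,\ep}$, $\pi^{bl}_\omega(\cdot/\ep)$ with smooth derivatives of $\sigma^0_{12}$) restricted to $\Oe_2$ decays exponentially away from $\Sigma$ and scales like $O(\ep)$ pointwise, hence produces contributions $\lesssim \ep\,\Vert\nabla R_\ep\varphi\Vert_{L^2}\lesssim\Vert\varphi\Vert_{H^1(\O_2)}$. Summing these estimates yields $\Vert\nabla\tilde{\cal P}^\ep\Vert_{W'}\leq C$.

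For the $L^2(\O_2)$ bound I would invoke the Ne\v{c}as--Bogovski\u{\i} inequality adapted to the space of $L$-periodic-in-$x_1$ functions: for any $q\in L^2(\O_2)$ with $\int_{\O_2}q\,dx=0$ there exists $\varphi\in W$ with $\mbox{div}\,\varphi=q$ and $\Vert\varphi\Vert_{H^1(\O_2)^2}\leq C\Vert q\Vert_{L^2(\O_2)}$. Choosing $q=\tilde{\cal P}^\ep-(\tilde{\cal P}^\ep)_{\O_2}$ and using the duality estimate just proved gives $\Vert\tilde{\cal P}^\ep-(\tilde{\cal P}^\ep)_{\O_2}\Vert_{L^2(\O_2)}\leq C$. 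The mean value $(\tilde{\cal P}^\ep)_{\O_2}$ is then controlled via the definition (\ref{BJ4.67}) of ${\cal P}^\ep$, the pressure normalization $\int_{\O_1}p^\ep\,dx=0$, the smoothness of $\tilde p^0$ and $p^0$, and Corollary \ref{T4.17c}, which yields $\Vert{\cal P}^\ep\Vert_{L^2(\O_1)}\leq C\sqrt{\ep}$.

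The main obstacle is the careful bookkeeping of the several boundary-layer source terms appearing on the right-hand side of (\ref{UPsystem}): one must verify that each factor of $\beta^{bl,\ep}-\ep C^{bl}_1\mathbf{e}^1 H(x_2)$ and $\omega^{bl,\ep}-\ep C^{bl}_\omega H(x_2)$ contributes an exponentially small weight in $\Oe_2$, so that the $1/\ep$ blow-up of $\Vert\nabla R_\ep\varphi\Vert_{L^2}$ is absorbed without any loss in $\ep$. A secondary subtlety is isolating the mean value of $\tilde{\cal P}^\ep$ on $\O_2$, which hinges on the choice of $\tilde p^0$ made in accordance with the jump condition (\ref{BJ46}); once this is done, the bound $\Vert\tilde{\cal P}^\ep\Vert_{L^2(\O_2)}\leq C$ follows without further work.
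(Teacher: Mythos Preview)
Your proposal is correct and follows exactly the approach the paper indicates: the paper's entire argument for Corollary~\ref{C1.6} is the sentence ``Using the estimate (\ref{4.84}) and properties (\ref{1.30})--(\ref{1.33}) of the restriction operator $R_\ep$, we arrive at\ldots'', and your write-up simply unpacks this via Tartar's duality identity and the variational equation (\ref{UPsystem}), which is the intended mechanism.

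One small point worth tightening: your control of the mean $(\tilde{\cal P}^\ep)_{\O_2}$ invokes $\Vert {\cal P}^\ep\Vert_{L^2(\O_1)}\leq C\sqrt{\ep}$, but the duality estimate you set up uses only test functions in $W$, which vanish on $\Sigma$ and therefore carry no information across the interface. To actually transfer the $\O_1$ information to the $\O_2$ mean, you should run the same duality argument with $\varphi\in H^1(\O)^2$ vanishing on $\{x_2=-H\}\cup\{x_2=h\}$ (but not on $\Sigma$), apply $\tilde R_\ep$, and absorb the additional $\O_1$ pressure term and the $\Sigma$ integrals using Corollary~\ref{T4.17c} and the explicit $\ep$-factor in (\ref{UPsystem}). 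Ne\v{c}as over all of $\O$ then bounds $\tilde{\cal P}^\ep$ up to its mean on $\O$, and that mean is fixed by the $L^2(\O_1)$ smallness of ${\cal P}^\ep$. This is a bookkeeping refinement, not a different idea. Also note a minor slip: in $\Oe_2$ the relevant correction is $\omega^{bl,\ep}$ itself (with $\kappa_\infty=0$), not $\omega^{bl,\ep}-\ep C^{bl}_\omega$; there is no $\ep$ prefactor on $C^{bl}_\omega$.
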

\vskip6pt We remark that in $\O_2$ we have
strong $L^2$-com\-pact\-ness of the family $\{\tilde {\cal P}^\ep\}$.
From the  properties of Tartar's restriction
operator (see \cite{Ta1980} or \cite{All97}) it follows:
  \begin{lemma} \label{L1.8}  %Let  $\{ \psi^\ep \} \subset L^2
%(\O_2) $, $\int_{\O_2} \psi^\ep = 0 $, $\forall \ep $, be a sequence
%converging weakly to $0$. Then there is a subsequence of $\{
%{\tilde {\cal P}}^\ep \} $ such that
%begin{equation}\label{1.35}
%\int_{\O_2} \bigl( {\tilde {\cal P}}^\ep - \frac{1}{ \vert \O_2 \vert }
%\int_{\O_2} {\tilde {\cal P}}^\ep \ dx \bigr) \psi^\ep \, dx \to 0.
%\end{equation}
% Consequently,
 The sequence
$\{ {\tilde {\cal P}}^\ep %- \frac{1}{ \vert \O_2 \vert } \int_{\O_2} {\tilde
 %{\cal P}}^\ep
\}$ is strongly relatively compact in  $L^2 (\O_2)$.
\end{lemma}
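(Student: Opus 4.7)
My plan is to upgrade the weak $L^2(\Omega_2)$-compactness that follows from Corollary \ref{C1.6} to strong compactness, using a Ne\v{c}as--Lions argument for extended pressures in the spirit of Tartar \cite{Ta1980} and Allaire \cite{All97}.

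First, the bound $\|\tilde{\mathcal{P}}^\varepsilon\|_{L^2(\Omega_2)} \leq C$ from (\ref{1.29P}) allows extracting a subsequence with a weak $L^2(\Omega_2)$-limit $\tilde{\mathcal{P}}^\star$, which (in view of Theorem \ref{Limitpress}) should coincide with the effective porous-medium pressure $\tilde{p}^0$. To lift weak convergence to strong, I would invoke the Ne\v{c}as inequality of the form $\|p\|_{L^2(\Omega_2)} \leq C \|\nabla p\|_{W'}$, valid for $p \in L^2(\Omega_2)$ of zero mean subject to the periodicity built into $W$. This reduces the task to showing that $\{\nabla \tilde{\mathcal{P}}^\varepsilon\}$ is Cauchy in $W'$.

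The Cauchy property I would establish by testing against an arbitrary $\varphi \in W$, using Tartar's identity $\langle\nabla \tilde{\mathcal{P}}^\varepsilon, \varphi\rangle_{\Omega_2} = \langle\nabla\mathcal{P}^\varepsilon, R_\varepsilon\varphi\rangle_{\Omega_2^\varepsilon}$ and then substituting the momentum equation (\ref{UPsystem}) for $\mathcal{P}^\varepsilon$. The right-hand side then splits into a principal term $\int_{\Omega_2^\varepsilon} (\mathbf{f} - \nabla \tilde{p}^0) \cdot R_\varepsilon\varphi \, dx$, which converges to a unique limit via Rellich combined with the restriction-operator bounds (\ref{1.32})--(\ref{1.33}), and a remainder controlled by positive powers of $\varepsilon$ via Theorem \ref{T4.17} and the boundary-layer decay estimates (\ref{4.60}). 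Equicontinuity of $\varphi \mapsto \langle\nabla \tilde{\mathcal{P}}^\varepsilon, \varphi\rangle$ (from the $W'$-bound) upgrades pointwise convergence to norm convergence in $W'$.

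The main obstacle I anticipate is handling the interface contributions on $\Sigma$: the pressure jump (\ref{BJ46}) and the non-vanishing stabilization constant $C^{bl}_\omega$ could in principle generate non-compact oscillations near $\Sigma$. These must be absorbed using the $H^{-1/2}(\Sigma)$-estimate (\ref{EstPressBdry}) together with the cell-periodic decay of $\omega^{bl,\varepsilon} - H(x_2)C^{bl}_\omega$ away from $\Sigma$ and the test-function vanishing imposed in $W$ at $\{x_2 = 0\}$. With these ingredients in place, the Cauchy property of $\{\nabla \tilde{\mathcal{P}}^\varepsilon\}$ in $W'$ is secured, and the Ne\v{c}as inequality concludes the strong $L^2(\Omega_2)$-compactness asserted in the lemma.
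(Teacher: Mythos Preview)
Your overall strategy is the standard Tartar--Allaire argument that the paper itself cites, and it is the right one: bound in $L^2(\Omega_2)$, Tartar's duality identity $\langle\nabla\tilde{\mathcal P}^\varepsilon,\varphi\rangle=\langle\nabla\mathcal P^\varepsilon,R_\varepsilon\varphi\rangle$, the momentum equation, and then Ne\v{c}as to pass from compactness of gradients in $W'$ to compactness of pressures in $L^2$. Two points in your write-up need correction, however.

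\textbf{The interface terms are a non-issue, and invoking (\ref{EstPressBdry}) would be circular.} Test functions $\varphi\in W$ vanish on $\Sigma=\{x_2=0\}$ by definition, and $R_\varepsilon\varphi=\varphi$ on $\Sigma$, so every surface integral on $\Sigma$ in (\ref{UPsystem}) drops out when you test with $R_\varepsilon\varphi$. There is nothing to ``absorb''. More importantly, estimate (\ref{EstPressBdry}) is part of Theorem~\ref{Limitpress}, whose proof \emph{uses} Lemma~\ref{L1.8}; citing it here would be circular. You do list the correct reason (vanishing of $\varphi$ on $\{x_2=0\}$), so just delete the other two ingredients from that paragraph.

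\textbf{``Equicontinuity upgrades pointwise convergence to norm convergence in $W'$'' is false as stated.} Uniform boundedness in $W'$ plus pointwise convergence gives only weak-$*$ convergence in $W'$, not norm convergence. The correct mechanism is the one implicit in your earlier sentence: from (\ref{UPsystem}), (\ref{4.84}) and (\ref{1.32})--(\ref{1.33}) you obtain
\[
\bigl|\langle\nabla\tilde{\mathcal P}^\varepsilon,\varphi\rangle\bigr|
\;\le\; C\varepsilon\,\|\varphi\|_{H^1(\Omega_2)} \;+\; C\,\|\varphi\|_{L^2(\Omega_2)},
\qquad \forall\,\varphi\in W,
\]
so $\nabla\tilde{\mathcal P}^\varepsilon=A_\varepsilon+B_\varepsilon$ with $\|A_\varepsilon\|_{W'}\le C\varepsilon\to 0$ and $B_\varepsilon$ bounded in $\bigl(L^2(\Omega_2)^2\bigr)'$. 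Since the embedding $L^2(\Omega_2)\hookrightarrow W'$ is compact (Rellich), $\{B_\varepsilon\}$ is relatively compact in $W'$, hence so is $\{\nabla\tilde{\mathcal P}^\varepsilon\}$. Ne\v{c}as then gives relative compactness of the zero-mean part of $\tilde{\mathcal P}^\varepsilon$ in $L^2(\Omega_2)$, and the $L^2$-bound (\ref{1.29P}) controls the means. Replace your ``equicontinuity'' sentence with this decomposition and the argument is complete.
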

Following the homogenization derivation of the Darcy law from \cite{ESP}, \cite{Ta1980},  \cite{All97} or \cite{MIK00},
we consider the following auxiliary problems in $Y_F$:

$\hbox{ For } 1\leq i\leq 2 ,\, \hbox{ find } \ \{ \mathbf{w}^i ,
\pi^i \} \in H^1_{per} (Y_F)^2 \times L^2 (Y_F), \,  \int_{Y_F}  \pi^i (y ) \, dy =0, \hbox{ such that
} $
\begin{equation}\label{1.57}
   \left\{ \begin{matrix}
%\hfill \cr
   \hfill - \Delta_y \mathbf{w}^i (y) + \nabla_y \pi^i (y ) = \mathbf{e}^i &
\hbox{ in } \ Y_F \cr \hfill \hbox{ div}_y \mathbf{w}^i (y) =0  & \hbox{ in
} \ Y_F \cr \hfill \mathbf{w}^i (y ) =0 & \hbox{ on } \, (\partial Y_F
\setminus
\partial Y) \cr
%\hfill \int_{Y_F}  \pi^i (y ) \, dy =0 &  \quad
 \end{matrix}  \right.
\end{equation}
 Obviously, these problems always admit  unique
solutions. Let us introduce the {\bf permeability matrix} $K$ by
\begin{equation}\label{1.58}
   K_{ij} = \int_{Y_F} \nabla_y \mathbf{w}^i : \nabla_y \mathbf{w}^j \ dy =\int_{Y_F} w^i_j \ dy ,
\; 1\leq i,j \leq 2.
\end{equation}
 Then after \cite{SP80}, permeability tensor $K$ is
symmetric and positive definite. Consequently, the {\bf drag
tensor} $K^{-1}$ is also positive definite.

\begin{proof} ({\bf Proof of Theorem \ref{Limitpress}}) $\; $
Let the function ${\hat p}^0$ be the solution for the boundary value problem
\begin{gather}
    \mbox{ div } \bigg( K  (\mathbf{f} (x) - \nabla  {\hat p}^0 )\bigg) =0\; \mbox{ in } \; \O_2  \label{PresspmAu1} \\
  {\hat p}^0  = p^0 + C^{bl}_\omega  \sigma^0_{12} (x_1) \; \mbox{ on } \; \Sigma; \quad K  (\mathbf{f} (x) - \nabla  {\hat p}^0 ) |_{\{ x_2 =-H \} } \cdot \mathbf{e}^2 =0. \label{PresspmAu2}
\end{gather}

We take as test function in (\ref{UPsystem}) $\varphi (x) \psi (y)$, with $\varphi \in C^\infty_0 (\O_2)$ and $\psi \in H^1_{per} (Y_F)^2$, div$_y \psi =0$. Then after passing to the subsequence
$$ \frac{{\cal U}^\ep }{\ep^2} \to {\cal U}^{imp} (x,y), \; \nabla \frac{{\cal U}^\ep }{\ep} \to \nabla_y {\cal U}^{imp} (x,y) \quad \mbox{ and } \; {\tilde {\cal P}}^\ep %- \frac{1}{ \vert \O_2 \vert } \int_{\O_2} {\tilde
%{\cal P}}^\ep
\to {\cal P}^{imp} (x) $$
and we have
\begin{equation}\label{Vareq}
    \int_{\O_2} \int_{Y_F} \nabla_y {\cal U}^{imp} : \nabla_y \psi \varphi \ dy dx -  \int_{\O_2} \int_{Y_F} {\cal P}^{imp} (x) \psi (y) \nabla_x \varphi (x) \ dy dx = \int_{\O_2} \int_{Y_F}  (\mathbf{f} - \nabla {\hat p}^0 ) \psi (y)  \varphi (x) \ dy dx,
\end{equation}
implying
\begin{equation}\label{pseudodarcy}
    {\cal U}^{imp} (x,y) = \sum_{j=1}^2 \mathbf{w}^j (y) (f_j (x) - \frac{\p ({\hat p}^0 + {\cal P}^{imp} ) }{\p x_j}) \; \mbox{(a.e.) in } \; \O_2 .
\end{equation}
Consequently, we obtain ${\hat p}^0 + {\cal P}^{imp} \in H^1 (\O_2 )$. %This proves (\ref{ConcDarcy}).

% is a consequence of Corollary (\ref{T4.17c}):

 By Corollary \ref{T4.17c} it holds  that $\ep^{-1} \nabla {\cal U}^\ep \rightharpoonup 0$ strongly in $L^2 (\O_1)^2$.
Next  taking $\varphi \in C^\infty_0 (\O)$ and using a priori estimates (\ref{4.84}), the variational equation (\ref{UPsystem}) yields a generalized form of (\ref{Vareq}) leading to
\begin{equation}\label{bdrypress}
    {\cal P}^{imp} =0 \quad \mbox{on} \quad \Sigma .
\end{equation}
Averaging div ${\cal U}^\ep$ in $\O_2$ results in
\begin{equation*}
    \mbox{ div } \bigg( K  (\mathbf{f} (x) - \nabla ( {\cal P}^{imp} + {\hat p}^0 ))\bigg) =0\; \mbox{ in } \; \O_2 .
\end{equation*}
Hence the function ${\cal P}^{imp} + {\hat p}^0 $ is $L-$ periodic in $x_1$ and satisfies
\begin{gather}
    \mbox{ div } \bigg( K  (\mathbf{f} (x) - \nabla ( { P}^{imp} + {\hat p}^0 ))\bigg) =0\; \mbox{ in } \; \O_2 ,  \label{PresspmN} \\
 {\cal P}^{imp} + {\hat p}^0  = p^0 + C^{bl}_\omega  \sigma^0_{12} (x_1) \; \mbox{ on } \; \Sigma; \quad K  (\mathbf{f} (x) - \nabla ({\cal P}^{imp} + {\hat p}^0 )) |_{\{ x_2 =-H \} } \cdot \mathbf{e}^2 =0, \label{Presspm2N}
\end{gather}
and we have ${\cal P}^{imp} =0$.  Let ${\tilde p}^0$ be the solution to the problem (\ref{Presspm})-(\ref{Presspm2}). Using Proposition \ref{proxio} we find out that  ${\tilde p}^0$ and ${\hat p}^0$ differ for $C\ep $ in any $H^k (\O_2)$, $k\in \mathbb{N}$. Hence we have established (\ref{ConcDarcy})-(\ref{ConcPression}). It remains to prove the last stated result %will be proved in the proposition which follows.\end{proof}\begin{proposition}\label{pressbdryest}
i.e. %It holds
\begin{equation}\label{pressest}
    || p^\ep - p^0 ||_{H^{-1/2} (\Sigma)} \leq C \sqrt{\ep} .
\end{equation}
%\end{proposition}
%\begin{proof}
\vskip0pt We use the variational equation (\ref{UPsystem}) with test function having support in $\O_1$ and  Corollary \ref{T4.17c} to obtain
\begin{equation}\label{Weaktrace1}
    || \mbox{ div } \ (\nabla {\cal U}_2^\ep - {\cal P}^\ep \mathbf{e}^2 - 2\beta^{bl, \ep}_2 \frac{d \sigma^0_{21}}{d x_1} \mathbf{e}^1 )||_{L^2 (\O_1 )} + || \nabla {\cal U}_2^\ep - {\cal P}^\ep \mathbf{e}^2 - 2\beta^{bl, \ep}_2 \frac{d \sigma^0_{21}}{d x_1} \mathbf{e}^1 ||_{L^2 (\O_1 )^2} \leq C\ep.
\end{equation}
Estimate (\ref{Weaktrace1}) implies the following estimate for the trace
\begin{equation}\label{Weaktrace2}
    || \frac{\p {\cal U}_2^\ep}{\p x_2} - {\cal P}^\ep  ||_{H^{-1/2} (\Sigma )} \leq C\ep .
\end{equation}
Next, we remark that
$$   \frac{\p {\cal U}_2^\ep}{\p x_2} = \mbox{ div } {\cal U}^\ep -  \frac{\p {\cal U}_1^\ep}{\p x_1}  %= \mbox{ div } %{\cal U}^\ep - \frac{\p v_1^\ep}{\p x_1} + \frac{\p v_1^0}{\p x_1} - (\p_{y_1} \beta^{bl}_1 ) (\frac{x}{\ep}) %\beta^0_{12} (x_1) - \ep C^{bl}_1 \frac{\p z_1^\sigma}{\p x_1}
 $$
and on $\Sigma$, using Theorem \ref{T4.17}, we obtain
\begin{equation}\label{Viscstress}
  || {\cal P}^\ep  ||_{H^{-1/2} (\Sigma )} \leq    ||  \frac{\p {\cal U}_2^\ep}{\p x_2} ||_{H^{-1/2}  (\Sigma )} +C\ep \leq ||   \frac{\p {\cal U}_1^\ep}{\p x_1}||_{H^{-1/2}  (\Sigma )} +
 % C || (\p_{y_1} \beta^{bl}_1 ) (\frac{x}{\ep}) ||_{H^{-1/2}  (\Sigma )} +
C \ep .
\end{equation}
A direct calculation shows that
$$ || [ \frac{\p {\cal U}_2^\ep}{\p x_2} - {\cal P}^\ep  ] ||_{L^\infty (\Sigma )} \leq C\ep ,  $$
and our result is valid for the traces taken from either unconfined side or from the side corresponding to the porous medium.
\end{proof}

\section{Appendix 1: Very weak solutions to the Stokes system
in $\Omega_1$}\label{WeakStokes} \vskip5pt

 Let $\mathbf{G}_1 \in L^2 (\O_1)^2$ , ${G}_2 \in L^2 (\O_1)^4$, and $\xi\in L^2(\Sigma )^2$. We consider the following Stokes system in $\Omega_1 $:
\begin{equation}\label{5.1}
    \left\{
      \begin{array}{ll}
      \displaystyle  -\Delta \mathbf{b} + \nabla P  = \mathbf{G}_1 + \mbox{ div } G_2   \;  \hbox{ in }
         \hskip1pt  \O_1  ;&\\ \noalign{\vskip+4mm}
       \hbox{div\  } \mathbf{b} =0  \quad \hbox{ in } \hskip3pt \O_1
       , &\\ \noalign{\vskip+4mm}
       \mathbf{b} =\xi \,  \hbox{ on } \hskip3pt \Sigma_T = \Sigma \cup \{ x_2 =h \}  , &\\ \\
       \{ \mathbf{b} , P \}  , \;  \hbox{is L-periodic in } \; x_1 . &
      \end{array}
    \right.
\end{equation}

Our aim is to show the existence of a very weak solution $(\mathbf{b}, P)\in L^2(\O_1)^2\times H^{-1}(\O_1)$ to problem  (\ref{5.1}). To this end, we use  the transposition method from \cite{CO}.

Thus, let us test problem (\ref{5.1}) with a smooth test function $(\mathbf{\Phi} , \pi)$, satisfying $\mathbf{\Phi}=0$ on $\Sigma_T $ and being  $L$-periodic in $x_1 $. Furthermore, $\pi$ is $L$-periodic in $x_1 $. We obtain
\begin{eqnarray}
&& < \mathbf{G}_1 +  \mbox{ div } G_2 , \mathbf{\Phi} > =
  <- \mbox{ div } (\nabla \mathbf{b} - P I) , \mathbf{\Phi} >   =-\int_{\O_1}P\hskip 3pt \hbox{ div } \hskip3pt \mathbf{\Phi} \ dx  +\notag \\
&&   \int_{\Sigma_T}(2
D(\mathbf{\Phi})-\pi I)\nu\xi \ dS dt + \int_{\O_1} \mathbf{b} \cdot \bigg( -
 \Delta \mathbf{\Phi} + \nabla \pi \bigg) \
dx . \label{Duo}
\end{eqnarray}
Let $(\mathbf{g},s) \in W^{q-2, r} (\O_1)^2 \times W^{q-1, r} (\O_1)$,  $1<r<+\infty$, $1\leq q\leq 2$, and
$
\mathcal{H}= \{ z\in   W^{q-1, r} (\O_1), \, \int_{\Omega_1} z \ dx=0 \},
$
and denote by $ \mathcal{H}^* $ its dual. Let now $\{\mathbf{\Phi},\pi\}$ be given by
\begin{equation}\label{5.2}
\left\{
  \begin{array}{ll}
    {\displaystyle  -  \Delta \mathbf{\Phi}  + \nabla \pi
    =\mathbf{g}} \; \hbox{ in } \hskip 3pt \O_1 , &\\ \noalign{\vskip+4mm}
    \hbox{div\ }\mathbf{\Phi}=s  \quad \hbox{ in } \hskip 3pt \O_1 , &\\  \noalign{\vskip+4mm}
    \Phi=0 ,  \hbox{ on }
\hskip 3pt \Sigma_T ,  \quad
    \{ \mathbf{\Phi} , \pi \}  \quad \hbox{is L-periodic in } \; x_1
    .&
  \end{array}
\right.
\end{equation}
After the elliptic regularity for the Stokes system in \cite{Tem}, for $q\neq 1+ 1/r $ we obtain
$
\mathbf{\Phi}\in  W^{q,r}(\Omega_1)^2, \,\,
 \pi\in W^{q-1,r} (\Omega_1),
$
with $\int_{\Omega_1}\pi= 0$,
and the following estimates hold
\begin{equation}\label{EstStokes}
\Vert\mathbf{\Phi} \Vert_{W^{q,r} (\Omega_1)^2} + \Vert \nabla \pi \Vert_{W^{q-2, r} (\Omega)} dt \leq C\left( \Vert \mathbf{g} \Vert_{W^{q-2 ,r} (\O_1)^2} + \Vert \nabla s \Vert_{W^{q-2, r} (\O_1)}\right).
\end{equation}

Now, analogously to the approach in \cite{CO} where the stationary Stokes system was treated,  for $q>1 + 1/r$, we consider the linear form
\begin{equation}\label{5.3}
\ell (\mathbf{g}, s)=  \langle \mathbf{G}_1 +  \mbox{ div }G_2 ,  \mathbf{\Phi} \rangle_{\O_1}  - \langle \xi ,
(\nabla\mathbf{\Phi} -\pi I)\nu  \rangle_{\Sigma_T} ,
\end{equation}
where $(\mathbf{\Phi}, \pi )$ is given by (\ref{5.2}). Since $(\mathbf{\Phi}, \pi )$ satisfies (\ref{EstStokes}), the linear form $\ell : W^{q-2, r} (\O_1)^2 \times  \mathcal{H}\rightarrow \mathbb{R} $ is continuous, and we set %\newtheorem{definition}{Definition}
 \begin{definition}  \label{D1} (A very weak variational formulation for the Stokes
problem (\ref{5.1})).
 $\{\mathbf{b}, P\}$ is  a very weak solution of the problem (\ref{5.1})  if
\begin{equation}
 \{ \mathbf{b}, P \}\in W^{2-q, r/(r-1)}(\O_1)^2\times \mathcal{H}^* \label{5.4}
\end{equation}
and satisfies
\begin{equation}
 \langle \mathbf{g} , \mathbf{b} \rangle_{ \Omega_1}     -\langle
P,s\rangle_{\mathcal{H}^*,\mathcal{H}}=\ell (\mathbf{g}, s), \quad
\forall \mathbf{g}\in L^r(\O_1)^2, \; \forall s\in \mathcal{H}.
\label{5.5}
\end{equation}
\end{definition}
\noindent Because of the linearity and continuity of $\ell $, Riesz's theorem implies
\begin{proposition} \label{D2} Let $1< r<+\infty$ and $1+1/r < q\leq 2$ and $< \xi_2 , 1 >_{\Sigma_T}=0$. Then, there exists a unique very
weak solution $\{\mathbf{b} , P\} $ for (\ref{5.1}). It satisfies
the following estimates
\begin{equation}\label{5.6}
\| \mathbf{b}  \|_{W^{2-q , r/(r-1)}(\O_1)^2}\le c\Big\{ \| \mathbf{G}_1
\|_{ L^{1} (\Omega_1)^2}+ \| G_2 \|_{ W^{1-q, r/(r-1)}
(\Omega_1)^4}+\|\xi\|_{W^{1+ 1/r -q, r/(r-1)}(\Sigma_T)^2} \Big\}.
\end{equation}
\end{proposition}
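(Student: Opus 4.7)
The strategy is the classical transposition method, following the stationary treatment in \cite{CO}. One does not construct $(\mathbf{b}, P)$ directly; instead, one shows that the linear form $\ell$ defined in (\ref{5.3}) represents a unique element of the dual of the product space of test data, and that element is by definition the very weak solution.

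First, I would observe that for $(\mathbf{g}, s) \in W^{q-2,r}(\Omega_1)^2 \times \mathcal{H}$, the auxiliary Stokes problem (\ref{5.2}) admits a unique solution $(\mathbf{\Phi}, \pi)$ satisfying (\ref{EstStokes}), by the elliptic theory in \cite{Tem} (the non-endpoint hypothesis $q \neq 1+1/r$ is used here). Next I would bound each term of $\ell(\mathbf{g}, s)$. The body-force pairing $\langle \mathbf{G}_1, \mathbf{\Phi}\rangle$ is controlled by $\|\mathbf{G}_1\|_{L^1}\|\mathbf{\Phi}\|_{L^\infty}$, where $\mathbf{\Phi}\in L^\infty(\Omega_1)$ follows from the two-dimensional Sobolev embedding $W^{q,r}\hookrightarrow C^0$, valid because $q > 1+1/r > 2/r$. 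The divergence pairing $\langle \div\, G_2, \mathbf{\Phi}\rangle$ is integrated by parts to $-\langle G_2, \nabla \mathbf{\Phi}\rangle$, the boundary contributions vanishing because $\mathbf{\Phi}|_{\Sigma_T}=0$ and $(\mathbf{\Phi},\pi)$ is $L$-periodic in $x_1$; it is then controlled by the duality between $W^{q-1,r}$ and $W^{1-q,r/(r-1)}$. Finally, the strict inequality $q > 1+1/r$ places both $\nabla \mathbf{\Phi}\cdot\nu|_{\Sigma_T}$ and $\pi|_{\Sigma_T}$ in $W^{q-1-1/r,r}(\Sigma_T)$, which duals with $\xi \in W^{1+1/r-q,r/(r-1)}(\Sigma_T)^2$.

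Combining these term bounds with (\ref{EstStokes}) yields
\[
|\ell(\mathbf{g},s)| \leq C\bigl(\|\mathbf{G}_1\|_{L^1} + \|G_2\|_{W^{1-q,r/(r-1)}} + \|\xi\|_{W^{1+1/r-q,r/(r-1)}}\bigr)\bigl(\|\mathbf{g}\|_{W^{q-2,r}} + \|s\|_{W^{q-1,r}}\bigr).
\]
Continuity of $\ell$ on the product Banach space then produces, by the standard representation of a continuous linear functional on $X\times Y$ as an element of $X^*\times Y^*$, a unique pair $(\mathbf{b}, P) \in W^{2-q,r/(r-1)}(\Omega_1)^2 \times \mathcal{H}^*$ satisfying (\ref{5.5}) for all $(\mathbf{g},s)\in W^{q-2,r}\times \mathcal{H}$; since $L^r\hookrightarrow W^{q-2,r}$ densely when $q\leq 2$, the identity (\ref{5.5}) holds in particular for $\mathbf{g}\in L^r$ as stated. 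The estimate (\ref{5.6}) is inherited from the continuity constant of $\ell$. Uniqueness follows by testing (\ref{5.5}) with $s=0$ and arbitrary $\mathbf{g}$ to fix $\mathbf{b}$, then with $\mathbf{g}=0$ and arbitrary $s$ to fix $P$. The compatibility hypothesis $\langle \xi_2, 1\rangle_{\Sigma_T}=0$ is the solvability condition coming from integrating $\div\,\mathbf{b}=0$ against the constant $1$ and using $L$-periodicity in $x_1$; since the test space $\mathcal{H}$ already imposes zero mean on $s$, only this normal-flux constraint on $\xi$ remains, and it is compatible with the construction above.

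The main obstacle is the bookkeeping of Sobolev/trace indices: the single hypothesis $q > 1+1/r$ must simultaneously deliver (i) the embedding $W^{q,r}\hookrightarrow L^\infty$ in two dimensions, needed to pair a merely integrable $\mathbf{G}_1$ against $\mathbf{\Phi}$, and (ii) enough regularity of $\nabla \mathbf{\Phi}\cdot\nu|_{\Sigma_T}$ and $\pi|_{\Sigma_T}$ to pair against a Dirichlet datum $\xi$ of low Sobolev order. The remaining technical point is simply the density argument reconciling the stated form of (\ref{5.5}) (with $\mathbf{g}\in L^r$) with the larger space $W^{q-2,r}$ in which the duality representation most naturally produces $\mathbf{b}$.
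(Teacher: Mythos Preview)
Your proposal is correct and follows exactly the same route as the paper: the paper's proof consists of the single sentence ``Because of the linearity and continuity of $\ell$, Riesz's theorem implies'' Proposition~\ref{D2}, relying on the setup (\ref{Duo})--(\ref{5.3}) and the regularity estimate (\ref{EstStokes}) stated just before. Your write-up simply unpacks the continuity verification that the paper leaves implicit --- the $L^1$--$L^\infty$ pairing for $\mathbf{G}_1$ via the embedding $W^{q,r}(\Omega_1)\hookrightarrow C^0$ when $q>1+1/r$ in two dimensions, the $W^{1-q,r/(r-1)}$--$W^{q-1,r}$ duality for $G_2$, and the trace pairing for $\xi$ --- and then invokes the same representation argument.
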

%For more details on the very weak solutions to (\ref{5.1}) we refer to
%\cite{Conca6}.
Another approach is to use directly the result from the article \cite{FKV}, which reads
\begin{proposition} \label{D3} Let $\mathbf{G}_1 =0$ and $G_2=0$. Then for $\xi \in L^2 (\Sigma_T )$, $\int_{\Sigma_T} \xi_2 =0$, there exists a unique very
weak solution $\{\mathbf{b} , P\} $ of (\ref{5.1}), satisfying
the following estimates
\begin{equation}\label{5.6A}
\| \mathbf{b}  \|_{H^{1/2}(\O_1)^2}\le c\|\xi\|_{L^{2}(\Sigma_T)^2} .
\end{equation}
Furthermore,
\begin{equation}\label{5.6B}
\| | x_2 |^{1/2} \nabla \mathbf{b}  \|_{L^{2}(\O_1)^2} + \| | x_2 |^{1/2} \pi  \|_{L^{2}(\O_1)^2}\le c\|\xi\|_{L^{2}(\Sigma_T)^2} .
\end{equation}
\end{proposition}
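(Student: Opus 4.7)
The plan is to reduce Proposition \ref{D3} to the main theorem of Fabes, Kenig and Verchota \cite{FKV} on the Stokes system with $L^{2}$ Dirichlet data in Lipschitz domains, and then translate the non-tangential maximal function estimate produced there into the $H^{1/2}$ bound (\ref{5.6A}) and the weighted Carleson-type bound (\ref{5.6B}).

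First I would unfold the $x_{1}$-periodicity by passing to the flat cylinder $(\mathbb{R}/L\mathbb{Z})\times (0,h)$, so that $\O_{1}$ becomes a smooth Lipschitz domain with boundary $\Sigma_{T}$. The compatibility on $\xi_{2}$ is the usual $\int_{\p\O_{1}}\xi\cdot\nu\,dS=0$ required for divergence-free Stokes, so \cite{FKV} applies verbatim and yields a unique pair $\{\mathbf{b},P\}$, interior-smooth, with $\mathbf{b}|_{\Sigma_{T}}=\xi$ in the non-tangential sense and $\|N(\mathbf{b})\|_{L^{2}(\Sigma_{T})}\le c\,\|\xi\|_{L^{2}(\Sigma_{T})^{2}}$, where $N$ denotes the non-tangential maximal function. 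Feeding this into the transposition test problem (\ref{5.2}), exactly as in the proof of Proposition \ref{D2}, identifies $\{\mathbf{b},P\}$ with the very weak solution of Definition \ref{D1}; uniqueness in that class inherits the FKV uniqueness.

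For the weighted gradient bound (\ref{5.6B}) I would invoke the Dahlberg-type square function / non-tangential maximal function equivalence (again inside \cite{FKV}): with $S(\mathbf{b})^{2}(Q)=\int_{\Gamma(Q)}|\nabla\mathbf{b}(y)|^{2}\operatorname{dist}(y,\p\O_{1})^{-1}\,dy$, Fubini rewrites
$$\|S(\mathbf{b})\|_{L^{2}(\Sigma_{T})}^{2}\ \approx\ \int_{\O_{1}}\operatorname{dist}(x,\p\O_{1})\,|\nabla\mathbf{b}|^{2}\,dx,$$
which near $\Sigma$ is comparable to $\int |x_{2}|\,|\nabla\mathbf{b}|^{2}\,dx$, while the top boundary $\{x_{2}=h\}$ contributes a uniformly bounded weight and poses no problem for the weighted $L^{2}$-norm appearing in the statement. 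The matching control for the pressure then follows from $\nabla P=\Delta\mathbf{b}$ combined with interior Caccioppoli inequalities on Whitney cubes, modulo subtracting a harmless mean constant.

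Finally, for (\ref{5.6A}) I would apply componentwise the standard Hardy-type characterization
$$\|u\|_{H^{1/2}(\O_{1})}^{2}\ \le\ C\Bigl(\|u\|_{L^{2}(\O_{1})}^{2}+\int_{\O_{1}}\operatorname{dist}(x,\p\O_{1})\,|\nabla u|^{2}\,dx\Bigr)$$
valid on any Lipschitz domain; the $L^{2}$ term is controlled by $\|N(\mathbf{b})\|_{L^{2}(\Sigma_{T})}$ through integration along non-tangential cones, and the weighted gradient term by the previous step. The genuinely hard content is entirely encapsulated in \cite{FKV}, namely layer potentials on Lipschitz boundaries, the Rellich identity, and Dahlberg's square function estimate; my plan treats this machinery as a black box, and the only non-standard verification is that $L$-periodicity in $x_{1}$ passes cleanly to the quotient cylinder, which is immediate.
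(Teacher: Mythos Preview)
The paper gives no proof of Proposition \ref{D3}; it is stated as a direct quotation of the results in \cite{FKV} (and \cite{BZ}), with the sentence ``Another approach is to use directly the result from the article \cite{FKV}, which reads'' preceding it. Your proposal is therefore exactly the intended argument, only spelled out: invoke the $L^{2}$ Dirichlet theory for Stokes on Lipschitz domains, read off the non-tangential maximal function bound, convert it to the square-function bound via Dahlberg's equivalence, and then use the Hardy characterization of $H^{1/2}$. The identification with the very weak solution of Definition \ref{D1} is routine, as you say.

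One point deserves more care than you give it. The square-function estimate from \cite{FKV} controls $\int_{\O_{1}}\dist(x,\p\O_{1})\,|\nabla\mathbf{b}|^{2}\,dx$, and near the top boundary $\{x_{2}=h\}$ this weight is $h-x_{2}$, not $|x_{2}|$. Your remark that ``the top boundary contributes a uniformly bounded weight and poses no problem'' is the wrong way around: the weight $|x_{2}|$ does \emph{not} vanish at $x_{2}=h$, so (\ref{5.6B}) as literally written would require unweighted $L^{2}$ control of $\nabla\mathbf{b}$ near the top, which is false for generic $L^{2}$ data there. In the paper's applications (proof of Theorem \ref{P4.19}) the boundary datum $\xi$ actually vanishes on $\{x_{2}=h\}$, so the solution is smooth near the top and the issue is moot; alternatively one should read the weight in (\ref{5.6B}) as $\dist(x,\p\O_{1})^{1/2}$. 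Either reading is consistent with \cite{FKV} and with how the estimate is used, but your sentence as written does not close the gap.
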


\section{Appendix 2: Navier's boundary layer and compressibility corrections}\label{Navier}
 %\subsection{ Navier's boundary layer} \label{Navier}

 In this Appendix, for completeness of the paper, we recall the derivation of
 Navier's boundary layer developed in \cite{JaMi2} and \cite{JM00} and presented also in \cite{JaegMik09}.

As observed in hydrology, the phenomena relevant to the boundary
occur in a thin layer surrounding the interface between a porous
medium and a free flow.  In this Appendix  we are going to present
a sketch of the  construction of the main boundary layer, used for
determining the coefficient $\alpha$ in (\ref{AUX}) and  the coefficient $C^{bl}_\omega$
in the interface pressure jump law (\ref{Presspm2}).
 Since the
law by Beavers and Joseph is an example of the Navier slip
condition, we call it {\bf Navier's boundary layer}.
\vskip3pt
In addition to the notations from subsection \ref{ExpDarcy}, we introduce
 the interface $S=(0,1)\times \{ 0\} $, the free fluid slab $Z^+ = (0,1) \times (0, +\infty )$
and the semi-infinite porous slab $Z^- =
\displaystyle\cup_{k=1}^\infty ( Y_F -\{ 0,k \} )$. The flow region
is then $Z_{BL} = Z^+ \cup S \cup Z^- $. \vskip3pt We consider the
following problem: \vskip1pt Find $\{ \beta^{bl} , \omega ^{bl} \} $
with square-integrable gradients satisfying
%$$\eqalignno{
\begin{gather} -\triangle _y \beta^{bl} +\nabla_y \omega ^{bl} =0\qquad \hbox{ in
} Z^+ \cup Z^- \label{BJ4.2}\\ \div_y  \beta^{bl} =0\qquad \hbox{ in }
Z^+ \cup Z^- \label{4.3} \\ \bigl[ \beta^{bl} \bigr]_S (\cdot , 0)= 0
 %\quad \hbox{ on } S \label{4.4}\\ %)\cr
 \quad \mbox{ and } \quad \bigr[ \{ \nabla_y \beta^{bl}
-\omega^{bl} I \}  \mathbf{e}^2 \bigl]_S (\cdot , 0) = \mathbf{e}^1 \ \hbox{ on }
S\label{4.5)} \\ \beta^{bl} =0 \quad \hbox{ on }
\displaystyle\cup_{k=1}^{\infty} ( \p Y_s -\{ 0,k \} ), \qquad \{
\beta^{bl} , \omega^{bl} \} \, \hbox{ is } 1- \hbox{periodic in }
y_1 \label{4.6} \end{gather} By Lax-Milgram's lemma, there exists  a
unique $\beta^{bl} \in L^2_{loc} (Z_{BL} )^2, \; \nabla _y z \in L^2
(Z_{BL})^4$  satisfying (\ref{BJ4.2})-(\ref{4.6}) and
 $\omega^{bl} \in L^2_{loc}
(Z^+ \cup Z^- )$, which is unique up to a constant and satisfying
(\ref{BJ4.2}). {We note that due to the incompressibility and the continuity of $\beta^{bl}$ on $S$, considering $\nabla \beta^{bl}$ or the symmetrized gradient $(\nabla +\nabla^t ) \beta^{bl}$ is equivalent}.\vskip0pt The goal of this subsection is to show that  system (\ref{BJ4.2})-(\ref{4.6}) describes a
boundary layer, i.e. that $\beta^{bl} $ and $ \omega^{bl} $
stabilize exponentially towards  constants, when $\vert y_2\vert \to
\infty$. Since we are studying an incompressible flow, it
is useful to prove properties of the conserved averages.
\begin{lemma} \label{L4.1} (\cite{JaMi2}). Any solution $\{ \beta^{bl} ,
\omega^{bl} \}$ satisfies
\begin{gather}
\int^1_0 \beta^{bl}_2 (y_1 , b) \ d y_1 =0 , \quad \forall b\in
\RR %\label{4.8}\\
\; \mbox{ and } \; \int^1_0 \omega^{bl}  (y_1 , b_1 ) \ d y_1 =
\int^1_0  \omega^{bl} (y_1 , b_2 ) \ d y_1 , \; \forall b_1 >
b_2 \geq 0, \label{4.8}\\ \int^1_0 \beta^{bl}_1  (y_1 , b_1) \ d y_1
=\int^1_0 \beta^{bl}_1   (y_1 , b_2) \ d y_1
= - \int_{Z_{BL}} \vert \nabla \beta^{bl} (y) \vert^2 \ dy ,  \qquad \forall b_1
> b_2 \geq 0.
\label{4.11}\end{gather}
\end{lemma}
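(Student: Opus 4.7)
The plan is to derive the three identities by integrating, with respect to $y_1\in(0,1)$, first the incompressibility constraint, then the two scalar Stokes equations, and finally an energy identity. At each step I would exploit $1$-periodicity in $y_1$, the no-slip condition on the pore walls, and the jump condition across $S$.

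For the first identity, I fix $b_1>b_2$ and integrate $\mbox{div}_y\,\beta^{bl}=0$ over the fluid part of the horizontal strip $(0,1)\times(b_2,b_1)$. Periodicity in $y_1$ annihilates the lateral flux and the no-slip condition annihilates the flux through $\partial Y_s-\{0,k\}$, leaving
$$\int_0^1\beta^{bl}_2(y_1,b_1)\,dy_1=\int_0^1\beta^{bl}_2(y_1,b_2)\,dy_1.$$
Continuity of $\beta^{bl}$ across $S$ glues the value in $Z^+$ to that in $Z^-$, so this is a single constant in $b$. To show the constant vanishes I push $b_2\to-\infty$: the cell-wise Poincar\'e inequality in $Y_F-\{0,k\}$ (where $\beta^{bl}$ vanishes on the solid boundary), combined with $\nabla\beta^{bl}\in L^2(Z_{BL})^4$, yields $\|\beta^{bl}\|_{L^2(Y_F-\{0,k\})}\to 0$ as $k\to\infty$, which supplies a sequence of horizontal levels along which the flux tends to zero.

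For the identities in $Z^+$, I integrate each scalar Stokes equation in $y_1$. Periodicity kills every $\partial_{y_1}$ term, so the first component reduces to
$$\partial_{y_2}^2\int_0^1\beta^{bl}_1(y_1,y_2)\,dy_1=0,$$
forcing this $y_1$-mean to be affine in $y_2$. Its slope is constant and must belong to $L^2(0,+\infty)$ because $\partial_{y_2}\beta^{bl}_1\in L^2(Z^+)$, hence the slope is zero and the mean is constant on $Z^+$. Applying the same procedure to the second component and using the first part of the lemma to eliminate $\int_0^1\beta^{bl}_2\,dy_1$ yields $\partial_{y_2}\int_0^1\omega^{bl}(y_1,y_2)\,dy_1=0$ on $Z^+$, which is the second identity.

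To identify the constant value of $\int_0^1\beta^{bl}_1\,dy_1$, I test the weak formulation of the Navier boundary layer system with $\varphi=\beta^{bl}$ itself. Integrating by parts separately in $Z^+$ and $Z^-$, using periodicity and no-slip, the trace terms on $S$ sum to exactly the jump $\bigl[\{\nabla_y\beta^{bl}-\omega^{bl}I\}\mathbf{e}^2\bigr]_S\cdot\beta^{bl}|_S=\beta^{bl}_1|_{y_2=0}$ by the transmission condition. Since $\mbox{div}_y\,\beta^{bl}=0$ kills the pressure term, this produces
$$\int_{Z_{BL}}|\nabla\beta^{bl}|^2\,dy=-\int_0^1\beta^{bl}_1(y_1,0)\,dy_1,$$
which combined with the constancy established above yields the third identity. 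The main technical difficulty is to justify this energy identity rigorously on the unbounded domain $Z_{BL}$; I would truncate at $y_2=\pm N$, control the top and bottom flux boundary terms by the global $L^2$ bound on $\nabla\beta^{bl}$ together with the cell-wise Poincar\'e inequality in the porous slab, and then pass to $N\to\infty$.
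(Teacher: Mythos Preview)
Your argument is correct and follows the standard route: integrate the incompressibility constraint and the two scalar Stokes equations against $1$ in $y_1$, exploit periodicity to drop lateral contributions, use the cell-wise Poincar\'e inequality in $Z^-$ to identify the constant flux as zero, and finally test the variational formulation with $\beta^{bl}$ itself to obtain the energy identity. The truncation-and-limit procedure you outline for the unbounded domain is the right way to make the last step rigorous.

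The paper itself does not prove this lemma; it is stated with a citation to \cite{JaMi2}, where the original proof appears. Your approach is precisely the one used there, so there is nothing to contrast.
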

\begin{proposition} \label{P4.3} (\cite{JaMi2}). Let
\begin{equation}\label{4.15}
    C^{ bl}_1  = \int_0^1 \beta^{ bl}_1 (y_1, 0) dy_1 .
\end{equation}
\begin{equation}\label{4.16}
  \hbox{Then, for every } \; y_2 \geq 0\; \hbox{ and } \; y_1 \in (0,1), \;  \vert \beta^{bl} (y_1 , y_2 ) - ( C^{bl}_1 , 0) \vert \leq C e^{-\delta y_2}
, \quad \forall \delta < 2\pi .
\end{equation}
\end{proposition}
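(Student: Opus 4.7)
The plan is to exploit the fact that in the slab $Z^+=(0,1)\times(0,+\infty)$ the pair $\{\beta^{bl},\omega^{bl}\}$ satisfies the homogeneous Stokes system with $1$-periodicity in $y_1$ and finite Dirichlet energy, and to analyze it mode-by-mode via a Fourier series in $y_1$. Write
\[
\beta^{bl}(y_1,y_2)=\sum_{k\in\ZZ}\hat\beta_k(y_2)\,e^{2\pi i k y_1},\qquad
\omega^{bl}(y_1,y_2)=\sum_{k\in\ZZ}\hat\omega_k(y_2)\,e^{2\pi i k y_1}.
\]
For each Fourier wavenumber, the Stokes system with divergence-free condition reduces to a linear ODE system in $y_2$ with constant coefficients, whose characteristic roots can be read off explicitly. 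The decay rate of each nonzero mode is $2\pi|k|$, which yields the claimed rate $\delta<2\pi$ coming from the smallest nonzero frequency.

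First I would treat the zero mode $k=0$. Incompressibility gives $\hat\beta_{0,2}'(y_2)=0$, so $\hat\beta_{0,2}$ is constant, and by Lemma \ref{L4.1} this constant equals $0$. Plugging into the Stokes equation forces $\hat\omega_0'=0$ and $\hat\beta_{0,1}''=0$, so $\hat\beta_{0,1}(y_2)=a+b\,y_2$. Finite Dirichlet energy of $\beta^{bl}$ over $Z^+$ excludes $b\neq 0$ (otherwise $\int_0^1\!\!\int_0^\infty |\partial_{y_2}\hat\beta_{0,1}|^2dy_2dy_1=\infty$), and the value of the constant $a$ is identified via the conservation identity \eqref{4.11} to be precisely $C^{bl}_1$. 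Thus the zero mode equals $(C^{bl}_1,0)$ for every $y_2\geq 0$.

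Next I would handle the modes $k\neq 0$. Setting $\xi=2\pi k$, the ODE for $(\hat\beta_k,\hat\omega_k)$ has characteristic roots $\pm|\xi|$ each of multiplicity two, so every mode is a linear combination of $e^{\pm|\xi|y_2}$ and $y_2\,e^{\pm|\xi|y_2}$. Finite Dirichlet energy together with the boundary data on $S$ forces the coefficients of the growing modes to vanish, leaving
\[
\hat\beta_k(y_2)=\bigl(A_k+B_k y_2\bigr)e^{-2\pi|k|y_2},\quad y_2\geq 0,
\]
with analogous formulas for $\hat\omega_k$. Elliptic regularity of the trace data on $S$ (a consequence of standard Stokes regularity in each period cell) gives polynomial control $|A_k|+|B_k|\leq C(1+|k|)^{-N}$ for any $N$, so summing over $k\neq 0$ is legitimate. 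Using the elementary bound $y_2\,e^{-2\pi|k|y_2}\leq C_\delta\,e^{-\delta y_2}$ valid for any $\delta<2\pi|k|$, and noting that the worst mode is $|k|=1$, one obtains
\[
\Bigl|\beta^{bl}(y_1,y_2)-(C^{bl}_1,0)\Bigr|
=\Bigl|\sum_{k\neq 0}\hat\beta_k(y_2)e^{2\pi i k y_1}\Bigr|\leq C\,e^{-\delta y_2},\qquad \forall \delta<2\pi,
\]
uniformly in $y_1\in(0,1)$, which is \eqref{4.16}.

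The main obstacle is the appearance of the secular factor $y_2\,e^{-2\pi|k|y_2}$ in the decaying modes (double characteristic root for the $\xi$-dependent Stokes ODE); this is precisely what forces the strict inequality $\delta<2\pi$ rather than $\delta=2\pi$. Everything else is either a routine Fourier/ODE computation or a direct application of the conservation identities of Lemma \ref{L4.1} and the finite-energy condition.
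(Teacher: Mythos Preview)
Your proposal is correct and is essentially the approach of the cited reference \cite{JaMi2}; the paper itself does not give a proof here but merely recalls the result in Appendix~2. The Fourier separation in $y_1$, identification of the zero mode with $(C^{bl}_1,0)$ via Lemma~\ref{L4.1}, and the ODE analysis of the nonzero modes yielding the double root $\pm 2\pi|k|$ (hence the secular factor and the strict inequality $\delta<2\pi$) are exactly the ingredients used in \cite{JaMi2}.
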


\begin{corollary} \label{C4.4} (\cite{JaMi2}). Let
\begin{equation}\label{4.19}
    C^{bl}_{\omega}  =\int_0^1  \omega^{bl} (y_1 , 0)\, dy_1 .
\end{equation}
\begin{equation}\label{4.20}
 \hbox{Then, for every} \quad  y_2 \geq 0\;  \hbox{ and } \; y_1 \in (0,1), \qquad \hbox{ we have } \qquad   \mid \omega^{bl} (y_1 , y_2 ) - C^{bl}_\omega \mid \leq   e^{-2\pi y_2} .
\end{equation}
\end{corollary}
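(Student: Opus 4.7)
The plan is to exploit the fact that the boundary layer pressure $\omega^{bl}$ is harmonic in the free-fluid slab $Z^+$ and satisfies periodic boundary conditions in $y_1$, so that Fourier separation of variables gives exponential decay at the rate dictated by the lowest non-trivial frequency, namely $2\pi$.

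First, I would apply the divergence operator to the Stokes equation $-\Delta_y \beta^{bl} + \nabla_y \omega^{bl} = 0$ in $Z^+$ and use $\div_y \beta^{bl} = 0$ to conclude that $\Delta_y \omega^{bl} = 0$ in $Z^+$. Combined with $1$-periodicity in $y_1$ and local $L^2$-control together with the $L^2$ gradient bound inherited from the variational setting of (\ref{BJ4.2})--(\ref{4.6}), this makes $\omega^{bl}(\cdot, y_2)$ a bounded harmonic function on the half-strip.

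Second, I would expand
\begin{equation*}
\omega^{bl}(y_1,y_2) - C^{bl}_\omega = \sum_{n \in \mathbb{Z}\setminus\{0\}} c_n(y_2)\, e^{2\pi i n y_1},
\end{equation*}
where the $n=0$ mode is absent thanks to the definition (\ref{4.19}) together with Lemma \ref{L4.1}, which forces the mean in $y_1$ to be constant in $y_2$ and equal to $C^{bl}_\omega$. Harmonicity translates into the ODE $c_n''(y_2) = (2\pi n)^2 c_n(y_2)$, whose solutions decomposing into growing and decaying exponentials are, by the boundedness of $\omega^{bl}$ at $y_2 \to +\infty$, restricted to $c_n(y_2) = c_n(0) e^{-2\pi |n| y_2}$. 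Hence
\begin{equation*}
|\omega^{bl}(y_1,y_2) - C^{bl}_\omega| \;\leq\; \Big(\sum_{n \neq 0} |c_n(0)|^2 \Big)^{1/2} \Big(\sum_{n \neq 0} e^{-4\pi|n|y_2}\Big)^{1/2},
\end{equation*}
and the second factor is controlled by $e^{-2\pi y_2}$ times a geometric series.

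The main obstacle, and the reason the stated bound carries the clean prefactor $1$ rather than a generic constant, is to propagate the $L^2$-normalization from the Lax--Milgram bound on the gradient all the way to a sharp $L^\infty$ estimate on the trace $\omega^{bl}(\cdot,0) - C^{bl}_\omega$. To reach the explicit coefficient one would invoke the matching jump condition in (\ref{4.5)}), which controls $\omega^{bl}(\cdot,0)$ in terms of the unit shear forcing $\mathbf{e}^1$, and a Poincaré-type inequality on the zero-mean trace to absorb the sum $\sum_{n\neq 0}|c_n(0)|$ into a quantity of size at most $1$; the factor $e^{-2\pi y_2}$ then emerges by pulling the dominant $|n|=1$ mode out of the sum. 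Once this is obtained, the estimate (\ref{4.20}) follows pointwise for every $y_1 \in (0,1)$ and every $y_2 \geq 0$.
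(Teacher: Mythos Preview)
The paper does not supply a proof of this corollary; it is quoted verbatim from \cite{JaMi2}. Your strategy---take the divergence of the Stokes system to see that $\omega^{bl}$ is harmonic in $Z^+$, Fourier-expand in $y_1$, use Lemma~\ref{L4.1} to identify the zero mode with $C^{bl}_\omega$, and kill the growing exponentials by boundedness---is exactly the standard route, and it is the argument used in the cited reference. So the core of your proof is correct and matches the intended method.

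Your honest acknowledgment of the prefactor issue is well placed, but your proposed fix is not convincing. The jump condition in (\ref{4.5)}) constrains the \emph{normal} component of $\{\nabla_y\beta^{bl}-\omega^{bl}I\}\mathbf{e}^2$ across $S$, which gives information about the difference $\partial_{y_2}\beta^{bl}_2-\omega^{bl}$ and about $\partial_{y_2}\beta^{bl}_1$, not a direct $L^\infty$ or $\ell^1$-Fourier bound on $\omega^{bl}(\cdot,0)-C^{bl}_\omega$ of size exactly $1$. A Poincar\'e inequality on the trace will produce a constant depending on the geometry, not the number $1$. In fact the companion estimate (\ref{4.16}) for $\beta^{bl}$ carries an unspecified constant $C$ and is stated only for $\delta<2\pi$; there is no structural reason the pressure should do strictly better. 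The sharp-looking inequality (\ref{4.20}) should be read as $|\omega^{bl}(y_1,y_2)-C^{bl}_\omega|\leq C e^{-2\pi y_2}$ with a constant depending on the boundary-layer data, which is precisely what your Fourier argument delivers; do not invest effort trying to force the prefactor down to $1$.
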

\noindent In the last step we study the decay of $\beta^{bl}$ and
$\omega^{bl}$ in the semi-infinite porous slab $Z^-$.
\begin{proposition} \label{P4.7} (see \cite{JaMi2}, pages 411-412). Let   $\beta^{bl} $ and $\omega^{bl} $
be defined by (\ref{BJ4.2})-(\ref{4.6}). Then,  there exist  positive constants
$C$ and $\gamma_0$,  such that
\begin{equation}\label{4.32}
|  \beta^{bl} (y_1 , y_2 ) | +
 | \nabla \beta^{bl} (y_1 , y_2 ) |  \leq C e^{-\gamma_0 | y_2|} , \qquad \hbox{for every } \quad (y_1 , y_2 ) \in Z^- .
\end{equation}
Furthermore,  the limit $\displaystyle  \kappa_{\infty} = \lim_{k\to -\infty } \frac{1 }{ \mid Y_F \mid }
\int_{Z_k} \omega^{bl} (y) \, dy$ exists and it holds
\begin{equation}\label{4.36}
    | \omega^{bl} (y_1 , y_2 )- \kappa_{\infty } | \leq C e^{-\gamma_0 | y_2|} , \qquad \hbox{for every } \quad (y_1 , y_2 ) \in Z^- .
\end{equation}
\end{proposition}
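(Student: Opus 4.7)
The plan is to adapt a Saint--Venant / iterated energy decay argument to the perforated semi-infinite slab $Z^-$, using the Poincar\'e inequality that is available because $\beta^{bl}$ vanishes on every solid obstacle. For $k\ge 1$ let
\[
E_k = \bigcup_{j\ge k}\bigl(Y_F - (0,j)\bigr), \qquad e(k) = \int_{E_k} |\nabla_y \beta^{bl}|^2\, dy,
\]
and note that by (\ref{4.11}), testing against the whole of $Z_{BL}$ already gives $e(1)\le e(0) < \infty$. The target is to show $e(k)\le C\theta^k$ for some $\theta<1$, set $\gamma_0 = -\tfrac12\log\theta$, and then upgrade the $L^2$--decay of the gradient to the pointwise statement via interior elliptic regularity.

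\textbf{Step 1 (Energy decay).} Pick a smooth cutoff $\chi_k$ equal to $1$ on $E_{k+1}$, vanishing outside $E_k$, with $|\nabla\chi_k|\le C$. Because $\mbox{div}\,\beta^{bl}=0$ in $Z^-$ and $\beta^{bl}=0$ on the solid boundaries, Lemma \ref{L4.1} together with the periodicity ensure that on the transition strip $T_k = E_k\setminus E_{k+1}$ the compatibility condition $\int_{T_k} \mbox{div}(\chi_k\beta^{bl})\,dy = 0$ holds. Hence a Bogovskii-type construction on the single perforated cell $T_k$ yields a corrector $\varphi_k\in H^1_0(T_k)^2$ with
\[
\mbox{div}\,\varphi_k = -\mbox{div}(\chi_k\beta^{bl})\quad\text{in }T_k,\qquad
\|\nabla\varphi_k\|_{L^2(T_k)}\le C\|\beta^{bl}\|_{L^2(T_k)}.
\]
Testing (\ref{BJ4.2}) against the divergence-free field $\chi_k\beta^{bl}+\varphi_k$ kills the pressure term, and integration by parts produces
\[
e(k+1)\le C\!\int_{T_k}\!\bigl(|\nabla\beta^{bl}|^2+|\beta^{bl}|^2\bigr)\,dy.
\]
The Poincar\'e inequality on the perforated cell $Y_F-(0,k)$ (valid since $\beta^{bl}=0$ on $\p Y_s$) replaces $|\beta^{bl}|^2$ by $|\nabla\beta^{bl}|^2$, and since the integral over $T_k$ equals $e(k)-e(k+1)$, we obtain $e(k+1)\le \theta\, e(k)$ with some $\theta<1$, whence $e(k)\le e(0)\,e^{-2\gamma_0 k}$.

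\textbf{Step 2 (Pointwise bound for $\beta^{bl}$).} Combining the cell Poincar\'e inequality with Step 1 gives $\|\beta^{bl}\|_{L^2(E_k)}^2\le C e^{-2\gamma_0 k}$. Since $\beta^{bl}$ solves a homogeneous Stokes system in $Z^-$, interior elliptic regularity applied on a pair of consecutive cells around $(y_1,y_2)$ with $y_2\le -k$ upgrades this to
$\|\beta^{bl}\|_{L^\infty}+\|\nabla\beta^{bl}\|_{L^\infty}\le C e^{-\gamma_0|y_2|}$ on that cell, producing (\ref{4.32}).

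\textbf{Step 3 (Pressure stabilization and existence of $\kappa_\infty$).} Set $m_k = \frac{1}{|Y_F|}\int_{Y_F-(0,k)}\omega^{bl}\,dy$. To compare $m_k$ and $m_{k+1}$, pick a test field of the form $\psi(y_2)\mathbf{e}^2$ smoothed into a divergence-free function on a two-cell window via another local Bogovskii correction, plug it into (\ref{BJ4.2}), and use Step 1 on the window: the left-hand side gives $|Y_F|(m_{k+1}-m_k)$ up to an exponentially small error, while the right-hand side is bounded by $C(e(k))^{1/2}\le Ce^{-\gamma_0 k}$. Hence $\{m_k\}$ is Cauchy, defining $\kappa_\infty$, and a cell-wise elliptic/Ne\v cas estimate of the form
\[
\|\omega^{bl}-m_k\|_{L^2(Y_F-(0,k))}\le C\|\nabla\beta^{bl}\|_{L^2(\text{window})}
\]
combined with $|m_k-\kappa_\infty|\le Ce^{-\gamma_0 k}$ and interior regularity yields the pointwise bound (\ref{4.36}).

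The delicate point, and really the only nontrivial one, is Step~1: verifying the compatibility condition on $T_k$ and actually building the Bogovskii-type corrector $\varphi_k$ in a single perforated cell so that the pressure disappears from the energy identity. Once that is in place, the Poincar\'e inequality on perforated cells mechanically closes the iteration, and Steps 2 and 3 are routine consequences of interior regularity for the Stokes system with zero right-hand side.
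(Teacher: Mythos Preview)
The paper does not give its own proof of this proposition; it simply cites \cite{JaMi2}, pages 411--412. Your Saint--Venant iterated-energy argument is correct and is precisely the method used there: the key ingredients are the cellwise Poincar\'e inequality (available because $\beta^{bl}=0$ on $\partial Y_s$) and a Bogovskii-type correction to produce a divergence-free localized test function, yielding $e(k+1)\le\theta\,e(k)$ and hence exponential $L^2$-decay, which interior Stokes regularity then upgrades to the pointwise statement.

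Two points worth tightening. In Step~1, taking $\varphi_k\in H^1_0(T_k)$ forces zero lateral trace rather than periodicity; this is harmless (zero is periodic), and the compatibility $\int_{T_k}\hbox{div}(\chi_k\beta^{bl})\,dy=0$ indeed reduces, via the divergence theorem and $\beta^{bl}=0$ on the solid boundary, to $\int_0^1\beta^{bl}_2(y_1,-k)\,dy_1=0$ from Lemma~\ref{L4.1}. In Step~3, rather than building a divergence-free test field from $\psi(y_2)\mathbf{e}^2$ (which neither vanishes on the obstacles nor is divergence-free, so the correction is awkward), it is cleaner to observe that the same local Stokes regularity used in Step~2 already controls $\nabla\omega^{bl}=\Delta\beta^{bl}$ pointwise by $Ce^{-\gamma_0|y_2|}$; integrating in $y_2$ shows $\omega^{bl}$ stabilizes to a constant $\kappa_\infty$ with exponential rate, and the Ne\v cas inequality you quote handles the oscillation within each cell. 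Finally, note that the pointwise bound up to $\partial Y_s$ in (\ref{4.32})--(\ref{4.36}) implicitly requires more smoothness of $\partial Y_s$ than the $C^{0,1}$ assumed in subsection~\ref{ExpDarcy}; this is a feature of the statement as imported from \cite{JaMi2}, not a defect in your argument.
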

\begin{remark} \label{R4.9} Without loosing generality, we take $\kappa_{\infty }=0$. If the geometry of $Z^-$ is axially symmetric
with respect  to reflections around the axis $y_1= 1/2$, then
$C^{bl}_\omega =0$. For the proof, we refer to \cite{JMN01}. In
\cite{JMN01} a detailed numerical analysis of the problem
(\ref{BJ4.2})-(\ref{4.6}) is given. Through numerical experiments it is shown
that for a general geometry of $Z^-$, $C^{bl}_\omega \neq 0$.
\end{remark}
It is important to be sure  that the law by
Beavers and Joseph does not depend on the position of the interface.
We have the following  result
\begin{lemma} \label{L4.20}  Let $a<0$ and let $\beta^{a, bl} $ be the
solution of (\ref{BJ4.2})-(\ref{4.6}) with $S$ replaced by $S_a = (0,1) \times
\{ a \}$, $Z^+$ by $Z^+_a =(0,1)\times (a, +\infty )$ and $Z^-_a =
Z_{BL} \setminus ( S_a \cup Z^+_a )$. Then,  it holds
\begin{equation}\label{4.101}
   C^{a, bl}_1 = C^{bl}_1 - a .
\end{equation}
\end{lemma}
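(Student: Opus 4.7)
The plan is to exploit the symmetry of the Dirichlet bilinear form from the weak formulation of both boundary layer problems, and then to identify both resulting integrals using the averaged velocity profile in the thin fluid strip $(0,1) \times (a, 0)$ that lies between the old and the new interface.

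First, I would write both problems in weak form. Integrating by parts separately on $Z^+ \cup Z^-$ (respectively $Z^+_a \cup Z^-_a$) and using the jump condition \eqref{4.5)}, I obtain, for every $L$-periodic (in $y_1$), divergence-free test field $\varphi$ with $\nabla_y \varphi \in L^2(Z_{BL})^4$ and vanishing on all solid boundaries of the pores,
\begin{equation*}
\int_{Z_{BL}} \nabla_y \beta^{bl} : \nabla_y \varphi \, dy = -\int_0^1 \varphi_1(y_1, 0)\, dy_1, \qquad \int_{Z_{BL}} \nabla_y \beta^{a,bl} : \nabla_y \varphi \, dy = -\int_0^1 \varphi_1(y_1, a)\, dy_1.
\end{equation*}
The key step is to insert $\varphi = \beta^{a,bl}$ in the first identity and $\varphi = \beta^{bl}$ in the second. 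Symmetry of the bilinear form (and the fact that $\nabla \beta^{bl}, \nabla \beta^{a,bl} \in L^2(Z_{BL})^4$, which makes $\int \nabla \beta^{bl} : \nabla \beta^{a,bl}$ absolutely convergent) then yields the identity
\begin{equation*}
\int_0^1 \beta^{a,bl}_1(y_1, 0)\, dy_1 = \int_0^1 \beta^{bl}_1(y_1, a)\, dy_1.
\end{equation*}
The main technical subtlety is that neither $\beta^{bl}$ nor $\beta^{a,bl}$ decays to zero as $y_2 \to +\infty$ -- both stabilize exponentially to non-zero constants by Proposition \ref{P4.3} and its shifted analog -- so they do not lie in a natural space of test functions that vanish at infinity. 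I would handle this by truncating to $Z^R_{BL} = Z_{BL} \cap \{|y_2| < R\}$, applying Green's identity on the truncation, and letting $R \to \infty$: the boundary integrals on $\{y_2 = \pm R\}$ are shown to vanish exponentially using Propositions \ref{P4.3} and \ref{P4.7} together with the corresponding decay estimates for the shifted problem.

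It remains to evaluate both sides. For the left-hand side, since $0 > a$ lies inside $Z^+_a$, the analog of \eqref{4.11} for the shifted problem gives that $\int_0^1 \beta^{a,bl}_1(y_1, y_2)\, dy_1$ is constant for $y_2 \geq a$, so its value at $y_2 = 0$ equals $C^{a,bl}_1$. For the right-hand side, restricting (as in the application in Remark \ref{R4.21}) to $a$ small enough that the strip $(0,1) \times (a, 0)$ is entirely fluid, I average the Stokes system over $y_1 \in (0,1)$; periodicity in $y_1$ kills $\partial_{y_1}$ terms and yields $\partial_{y_2}^2 \overline{\beta^{bl}_1} = 0$ in the strip, so $\overline{\beta^{bl}_1}(y_2) := \int_0^1 \beta^{bl}_1(y_1, y_2)\, dy_1$ is affine there. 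By continuity across $S$ and \eqref{4.15}, $\overline{\beta^{bl}_1}(0) = C^{bl}_1$; by \eqref{4.11} the slope at $y_2 = 0^+$ vanishes, so the first component of the jump condition \eqref{4.5)}, averaged over $y_1$, forces $\partial_{y_2} \overline{\beta^{bl}_1}|_{0^-} = -1$. Thus $\overline{\beta^{bl}_1}(y_2) = C^{bl}_1 - y_2$ throughout the strip, giving $\int_0^1 \beta^{bl}_1(y_1, a)\, dy_1 = C^{bl}_1 - a$. Together with the symmetric identity this produces $C^{a,bl}_1 = C^{bl}_1 - a$. The main obstacle in the whole argument is the justification of the symmetric testing step despite the stabilization to non-zero constants at $+\infty$, which is where the exponential decay rates from Propositions \ref{P4.3} and \ref{P4.7} are essential.
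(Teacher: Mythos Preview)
Your argument is correct. The paper does not supply a proof of Lemma \ref{L4.20}; it merely states the result and calls it ``simple'' before moving on to its consequence in Remark \ref{R4.21}. Your reciprocity argument --- testing each boundary layer problem with the solution of the other and exploiting the symmetry of the Dirichlet form --- together with the affine structure of the $y_1$-averaged tangential velocity in the fluid strip $(0,1)\times(a,0)$, is a clean and complete way to obtain the identity. The truncation-and-limit step you describe is the standard way to justify the testing despite the stabilization to nonzero constants at $+\infty$, and the exponential decay from Propositions \ref{P4.3} and \ref{P4.7} (and their shifted analogues) is exactly what makes the boundary terms at $y_2=\pm R$ vanish, since $\nabla\beta^{bl}\to 0$ and $\beta^{a,bl}_2\to 0$ exponentially while $\beta^{a,bl}_1$ stays bounded.

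One small remark: you present the hypothesis ``$|a|$ small enough that the strip $(0,1)\times(a,0)$ is entirely fluid'' as an additional restriction, but it is already implicit in the lemma's setup. The replacement $Z^+_a=(0,1)\times(a,+\infty)$ only defines a pure-fluid slab, and hence a well-posed shifted problem of the form (\ref{BJ4.2})--(\ref{4.6}), when no solid inclusions lie in the strip; since $Y_s$ is strictly contained in $\bar Y$, this holds for all $a$ in some interval $(a_0,0)$, which is precisely the regime used in Remark \ref{R4.21}. So your ``restriction'' is the natural domain of validity of the statement itself rather than a limitation of your method.
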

 \noindent This simple result  implies the invariance of the
obtained law on the position of the interface. It is in agreement with the law of Saffman for the slip coefficient formulated in \cite{SAF}. The law was confirmed numerically by Sahraoui and Kaviany in \cite{SahKav92}. For more discussion, we refer to the book \cite{K95}, page $74$, formulas $(2.193)-(2.195)$ and page $81$, Fig. $2.22$ and formula $(2.211)$.

The reminder  of the section is devoted to auxiliary functions
correcting the compressibility effects. We define $\mathbf{Q}^{ bl} $,  by
\begin{gather}
    \mbox{ div}_{y} \mathbf{Q}^{bl} (y) = \beta^{bl}_1 (y) -  C^{bl}_1 H(y_2)
    \quad \mbox{ in  } \; Z^+ \cup Z^-, \label{Div1} \\
         \mathbf{Q}^{ bl} = 0 \;  \hbox{on }  \; \displaystyle\cup_{k=1}^{\infty} ( \p Y_s -\{ 0,k \} ), \quad
          \mathbf{Q}^{ bl}  \; \hbox{is 1-periodic in } \; y_1   \label{Div2} \\
   [\mathbf{Q}^{ bl}]_S =  \mathbf{e}^2 \int_{Z_{BL}} ( C^{bl}_1 H(y_2) - \beta^{bl}_1 (y)) \ dy = - \mathbf{e}^2 \int_{Z^{-}} \beta^{bl}_1 (y) \ dy .
      \label{Div3}
\end{gather}
\begin{proposition}\label{Divdecay} (see \cite{JaMi2}, page 411)
Problem (\ref{Div1})-(\ref{Div3}) has at least one solution $\mathbf{Q}^{ bl} \in H^1 (Z^+ \cup Z^- )^2 \cap C^\infty_{loc}(Z^+ \cup Z^- )^2 $. Furthermore, $\mathbf{Q}^{bl} \in W^{1,q} (Z^+)^2$,   $\mathbf{Q}^{ bl} \in W^{1,q} (Z^-)^2$, for all $q\in [1, +\infty )$ and there exists $\gamma_0 >0$ such that
\begin{equation}\label{Div4}
    e^{\gamma_0 y_3 } \mathbf{Q}^{ bl} \in H^1 (Z^+ \cup Z^- )^2 .
\end{equation}
\end{proposition}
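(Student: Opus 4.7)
The plan is to solve two separate divergence problems, one in $Z^+$ and one in $Z^-$, and glue them across $S$ using the prescribed jump. The data $\beta^{bl}_1 - C^{bl}_1 H(y_2)$ decays exponentially away from $S$: in $Z^+$ by Proposition \ref{P4.3}, and in $Z^-$ by Proposition \ref{P4.7}, where moreover $H(y_2)=0$. Also, by Lemma \ref{L4.1}, $\int_0^1(\beta^{bl}_1(y_1,y_2)-C^{bl}_1)\,dy_1 = 0$ for every $y_2\geq 0$, so in $Z^+$ the data has zero horizontal mean.

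In $Z^+$ the construction is via Fourier series in $y_1$. Writing $\beta^{bl}_1-C^{bl}_1 = \sum_{k\neq 0} a_k(y_2) e^{2\pi i k y_1}$ (the zero mode vanishes by Lemma \ref{L4.1}), I seek $\mathbf{Q}^{bl}$ mode by mode of the form $(Q^1_k,Q^2_k)(y_2)e^{2\pi i k y_1}$ with $\p_{y_2} Q^2_k + 2\pi i k Q^1_k = a_k$. One convenient choice is $Q^1_k = 0$ and $Q^2_k(y_2) = -\int_{y_2}^{+\infty} a_k(s)\,ds$, which is well defined thanks to the exponential decay of $a_k$ and inherits the same decay rate; reassembling the Fourier series and exploiting the uniform bound $\delta<2\pi$ on the rate in Proposition \ref{P4.3} yields a solution with $e^{\gamma_0 y_2}\mathbf{Q}^{bl}\in H^1(Z^+)^2$ for any $\gamma_0<2\pi$. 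Periodicity in $y_1$ and smoothness of the data give $W^{1,q}$ for every $q<\infty$ by interior elliptic regularity.

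In $Z^-$ the construction has to respect the periodic solid inclusions, and here one uses a Bogovskii/Tartar-type argument cell by cell. On each fluid period cell $Y_F-(0,k)$, $k\geq 1$, I would apply the Bogovskii operator to produce a $W^{1,q}$ vector field with divergence equal to $\beta^{bl}_1$ and vanishing on $\p Y_s - (0,k)$, the compatibility conditions being adjusted by horizontal fluxes that telescope because $\div \beta^{bl}=0$. Since the Bogovskii operator has bounds proportional to the $L^q$ norm of the data on the cell, and by Proposition \ref{P4.7} that norm is $O(e^{-\gamma_0 k})$, summing over $k$ gives a globally defined $\mathbf{Q}^{bl}\in H^1(Z^-)^2\cap W^{1,q}(Z^-)^2$ with $e^{\gamma_0|y_2|}\mathbf{Q}^{bl}\in H^1(Z^-)^2$. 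The matching jump condition $[\mathbf{Q}^{bl}]_S = -\mathbf{e}^2\int_{Z^-}\beta^{bl}_1\,dy$ is then imposed by adjusting the trace of $Q^{bl}_2$ at $y_2=0^+$; consistency with $\div \mathbf{Q}^{bl} = \beta^{bl}_1 - C^{bl}_1 H(y_2)$ follows from integrating the divergence equation over $Z^-$, using Gauss's theorem and the exponential decay of $\mathbf{Q}^{bl}$ at $y_2=-\infty$. Interior elliptic regularity plus smoothness of $\beta^{bl}$ gives $C^\infty_{loc}$.

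The main obstacle is the construction in $Z^-$: combining the cell-by-cell Bogovskii scheme with a uniform exponential weight is delicate because Bogovskii's operator does not come equipped with weighted estimates. The remedy is to let the exponential decay of $\beta^{bl}$ in $Z^-$ do the work, i.e. to use the unweighted Bogovskii bound on each single cell and then sum a geometric series in the vertical cell index, choosing $\gamma_0$ strictly smaller than the intrinsic Poincaré decay rate coming from Proposition \ref{P4.7} so that the series converges.
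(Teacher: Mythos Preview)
The paper itself does not give a proof of this proposition; it simply cites \cite{JaMi2}, page 411. Your sketch follows what is the standard route for such divergence problems with exponentially decaying data: a layer-by-layer (in $Z^+$) or cell-by-cell (in $Z^-$) Bogovskii construction, with the unweighted operator bound on each unit cell turned into a global weighted bound by summing a geometric series in the vertical index. That is indeed the argument in \cite{JaMi2}, and your identification of the key inputs (Lemma~\ref{L4.1} for the vanishing zero mode in $Z^+$, Propositions~\ref{P4.3} and~\ref{P4.7} for the exponential decay) is correct.

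Two points where your write-up is looser than it needs to be. First, the matching at $S$: your Fourier construction in $Z^+$ produces $Q^{bl}_1(\cdot,0^+)=0$ but $Q^{bl}_2(\cdot,0^+)=-\int_0^{\infty}(\beta^{bl}_1(y_1,s)-C^{bl}_1)\,ds$, which is a non-constant (zero-mean) function of $y_1$. Since the prescribed jump in (\ref{Div3}) is a constant vector, ``adjusting the trace of $Q^{bl}_2$ at $y_2=0^+$'' is not a free move---you must either add a divergence-free, exponentially decaying corrector in $Z^+$ carrying exactly this trace, or (simpler, and closer to \cite{JaMi2}) prescribe the trace of the $Z^-$ construction at $y_2=0^-$ to be $(0,\,Q^{bl}_2(\cdot,0^+)+\int_{Z^-}\beta^{bl}_1\,dy)$, which is compatible with the Bogovskii step in cell $1$ precisely because the telescoping flux identity gives the right mean. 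Second, the regularity $\mathbf{Q}^{bl}\in C^\infty_{loc}$ and $W^{1,q}$ does not come from ``interior elliptic regularity'' (no elliptic equation is being solved); it follows directly from the explicit construction and the smoothness of $\beta^{bl}$, together with the $L^q$-boundedness of the Bogovskii operator on a fixed reference cell.
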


\end{document}